\newtheorem{theorem}{Theorem}[section]
\newtheorem{lemma}[theorem]{Lemma}
\newtheorem{corollary}[theorem]{Corollary}
\newtheorem{proposition}[theorem]{Proposition}
 \theoremstyle{definition}
 \newtheorem{definition}[theorem]{Definition}
 \newtheorem{remark}[theorem]{Remark}
 \newtheorem{example}[theorem]{Example}
\newtheorem{examples}[theorem]{Examples}
\numberwithin{equation}{section}
\newcommand {\N}{\mathbb{N}} 
\newcommand {\Z}{\mathbb{Z}} 
\newcommand {\R}{\mathbb{R}} 
\newcommand {\Q}{\mathbb{Q}} 
\newcommand {\C}{\mathbb{C}} 
\newcommand{\T}{\mathbb{T}}
\newcommand{\CC}{\mathcal{C}}
   \DeclareMathOperator{\GL}{GL}
\DeclareMathOperator{\Fix}{Fix}
\DeclareMathOperator{\Ker}{Ker}
\DeclareMathOperator{\M}{Mat}
\DeclareMathOperator{\supp}{supp}
\begin{document}
\title
[Garden of Eden theorem for harmonic models]{Homoclinically expansive actions and a Garden of Eden theorem for harmonic models}
\author{Tullio Ceccherini-Silberstein}
\address{Dipartimento di Ingegneria, Universit\`a del Sannio, 82100 Benevento, Italy}
\email{tullio.cs@sbai.uniroma1.it}
\author{Michel Coornaert}
\address{Universit\'e de Strasbourg, CNRS, IRMA UMR 7501, F-67000 Strasbourg, France}
\email{michel.coornaert@math.unistra.fr}
\author{Hanfeng Li}
\address{Department of Mathematics, Chongqing University, Chongqing 401331, China}
\address{Department of Mathematics, SUNY at Buffalo, Buffalo, NY 14260-2900, USA}
\email{hfli@math.buffalo.edu}
\subjclass[2010]{37D20, 37C29, 54H20, 37A45, 22D32, 22D45}
\keywords{Garden of Eden theorem, finitely presented algebraic action, principal algebraic action, harmonic model, homoclinic group,
Pontryagin duality, topological rigidity, Moore property, Myhill property, expansive action, homoclinic expansive action,
weakly expansive polynomial}
\begin{abstract}
Let $\Gamma$ be a countable Abelian group and $f \in \Z[\Gamma]$, where $\Z[\Gamma]$ denotes the integral group ring of $\Gamma$.
Consider the Pontryagin dual $X_f$ of the cyclic $\Z[\Gamma]$-module $\Z[\Gamma]/\Z[\Gamma] f$ and
suppose that $f$ is weakly expansive (e.g., $f$ is invertible in $\ell^1(\Gamma)$, or, when $\Gamma$ is not virtually $\Z$ or $\Z^2$,
$f$ is well-balanced) and that $X_f$ is connected.
We prove that if $\tau \colon X_f \to X_f$ is a $\Gamma$-equivariant continuous map, then $\tau$ is surjective if and only if the restriction of $\tau$ to each $\Gamma$-homoclinicity class is injective.
We also show that this equivalence remains valid in the case when
$\Gamma = \Z^d$ and $f \in \Z[\Gamma] = \Z[u_1,u_1^{-1}, \ldots, u_d, u_d^{-1}]$ is an irreducible atoral polynomial
such that its zero-set $Z(f)$ is contained in the image of the intersection of $[0,1]^d$ and a finite union of hyperplanes in
$\R^d$ under the quotient map $\R^d \to \T^d$ (e.g., when $d \geq 2$ such that $Z(f)$ is finite).
These two results are analogues of the classical Garden of Eden theorem of Moore and Myhill for cellular automata with finite alphabet over
$\Gamma$.
\end{abstract}
\date{\today}
\maketitle

\tableofcontents
\section{Introduction}
Consider a dynamical system $(X, \alpha)$, consisting of a compact metrizable space $X$,
called the \emph{phase space},
equipped with a continuous  action  $\alpha$
of a countable group $\Gamma$.
Let  $d$ be a metric on $X$ that is compatible with the topology.
Two points $x, y \in X$ are said to be \emph{homoclinic} if
$\lim_{\gamma \to \infty} d(\gamma x, \gamma y) = 0$, i.e.,
for every $\varepsilon > 0$, there exists a finite set $F \subset \Gamma$ such that
$d(\gamma x,\gamma y) <  \varepsilon$
for all $\gamma \in \Gamma \setminus F$.
Homoclinicity is an equivalence relation on $X$.
By compactness of $X$, this relation does not depend on the choice of the compatible metric $d$.
 A map with source set   $X$ is called \emph{pre-injective}  (with respect to $\alpha$)
 if its restriction to each homoclinicity class is injective.
  \par
An \emph{endomorphism} of the dynamical system $(X,\alpha)$ is a continuous map
 $\tau \colon X \to X$ that is $\Gamma$-equivariant
 (i.e., $\tau(\gamma x) = \gamma \tau(x)$ for all $\gamma \in \Gamma$ and $x \in X$).
 \par
The original Garden of Eden theorem  is a statement in symbolic dynamics that characterizes surjective endomorphisms of shift systems with finite alphabet.
To be more specific, let us fix a compact metrizable space  $A$, called the \emph{alphabet}.
Given a countable group $\Gamma$,
the \emph{shift} over the group $\Gamma$ with alphabet $A$ is the dynamical system
$(A^\Gamma,\sigma)$, where
$A^\Gamma = \{x \colon \Gamma \to A\}$ is equipped with the product topology and
  $\sigma $ is the action defined by
  $\gamma x(\gamma') \coloneqq  x(\gamma^{-1} \gamma')$ for all $x \in A^\Gamma$ and $\gamma,\gamma' \in \Gamma$.
The \emph{Garden of Eden theorem}  states that, under the hypotheses that the group  $\Gamma$ is amenable and the alphabet $A$ is finite,    an endomorphism of $(A^\Gamma,\sigma)$ is surjective if and only if it is pre-injective.
It was first proved for  $\Gamma = \Z^d$ by Moore and Myhill in the early 1960s.
Actually, the implication
surjective $\implies$ pre-injective  was first proved by Moore in~\cite{moore}
while the converse implication was established shortly after by
Myhill in~\cite{myhill}.
The Garden of Eden theorem was subsequently extended to
finitely generated groups of subexponential growth by Mach{\`{\i}} and Mignosi~\cite{machi-mignosi}
and finally to all countable amenable groups by
Mach{\`{\i}},  Scarabotti, and the first author in~\cite{ceccherini}.
 \par
Let us  say that the  dynamical system $(X,\alpha)$ has the \emph{Moore property} if every surjective endomorphism of $(X,\alpha)$ is pre-injective and that it has the \emph{Myhill property} if
every pre-injective endomorphism of $(X,\alpha)$ is surjective.
We say that the dynamical system $(X,\alpha)$ has the \emph{Moore-Myhill property},
or that it satisfies the \emph{Garden of Eden theorem},
if it has both the Moore and the Myhill properties.

The goal of the present paper is to establish a version of the Garden of Eden theorem for principal algebraic dynamical systems associated with weakly expansive polynomials over countable Abelian groups and with connected phase space.
By an \emph{algebraic dynamical system}, we mean a dynamical system of the form
$(X,\alpha)$, where $X$ is a compact metrizable Abelian group and $\alpha$ is an  action of a countable group $\Gamma$
on $X$ by continuous group automorphisms. Note that, in this case, the set $\Delta(X,\alpha) = \{x \in X: x$ is homoclinic to $0_X\} \subset X$, where $0_X$ is the identity element of $X$, is a subgroup of $X$, called the \emph{homoclinic group} and two points $x,y \in X$
are homoclinic if and only if $x-y \in \Delta(X,\alpha)$, that is, they belong to the same coset of $\Delta(X,\alpha)$ in $X$.
By Pontryagin duality, algebraic dynamical systems with acting group $\Gamma$ are in one-to-one correspondence with countable left
$\Z[\Gamma]$-modules.
Here $\Z[\Gamma]$ denotes the integral group ring of $\Gamma$.
This correspondence has been intensively studied in the last decades
and revealed fascinating connections  between commutative algebra, number theory, harmonic analysis, ergodic theory,   and dynamical systems (see in particular the monograph~\cite{schmidt-book} and the survey~\cite{lind-schmidt-survey-heisenberg}).
\par
Let $f \in \Z[\Gamma]$ and consider the cyclic left $\Z[\Gamma]$-module
$M_f \coloneqq \Z[\Gamma]/  \Z[\Gamma] f$ obtained by quotienting the ring $\Z[\Gamma]$ by the principal left ideal generated by $f$.
The algebraic dynamical system  associated by Pontryagin duality with $M_f$
is denoted by $(X_f,\alpha_f)$ and is  called the \emph{principal algebraic dynamical system} associated with $f$.
\par
We denote by $\CC_0(\Gamma)$ the real vector space of all functions $g \colon \Gamma \to \R$ vanishing at infinity (i.e., for every $\varepsilon > 0$ there exists a finite subset $\Omega \subset \Gamma$ such that $|g_\gamma| \leq \varepsilon$ for all $\gamma \in \Gamma \setminus \Omega$).
Moreover, for $f \in \Z[\Gamma]$ and $g \in \CC_0(\Gamma)$ we denote by $fg \in \CC_0(\Gamma)$ their
convolution product (see Subsection \ref{ss:convolution}).

\begin{definition}
\label{def:we}
A polynomial $f \in \Z[\Gamma]$ is said to be {\it weakly expansive} provided:
\begin{enumerate}[{\rm (we-1)}]
\item $\forall g \in \CC_0(\Gamma)$, $fg = 0$ $\Rightarrow$ $g = 0$;
\item $\exists \omega \in \CC_0(\Gamma)$ such that $f\omega = 1_\Gamma$.
\end{enumerate}
\end{definition}
\par
Our first result is the following.

\begin{theorem}[Garden of Eden theorem for algebraic actions associated with weakly expansive polynomials]
\label{t:main-result}
Let $\Gamma$ be a countable Abelian group and $f \in \Z[\Gamma]$.
Suppose that $f$ is weakly expansive and that $X_f$ is connected.
Then the dynamical system $(X_f,\alpha_f)$ has the Moore-Myhill property.
\end{theorem}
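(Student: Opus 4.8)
The plan is to carry the classical Moore--Myhill argument over to this setting, with the cardinality of the finite alphabet replaced by the topological entropy $h\coloneqq h_{\mathrm{top}}(X_f,\alpha_f)$ --- for a principal algebraic action of a countable amenable group this equals the logarithmic Fuglede--Kadison determinant of $f$, and under our hypotheses $0<h<\infty$. Since $(X_f,\alpha_f)$ need not be expansive, the substitute for expansiveness is the uniform separation of homoclinic points; and connectedness of $X_f$ will be used to ensure that metric balls in $X_f$ are uniformly non-negligible, so that entropy defects are detectable.

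I would begin by describing the homoclinic group. Using (we-2), fix $\omega\in\CC_0(\Gamma)$ with $f\omega=1_\Gamma$, let $\rho\colon\CC_0(\Gamma)\to\T^\Gamma$ be the reduction modulo $\Z^\Gamma$, and set $x^\Delta\coloneqq\rho(\omega)$. Then $x^\Delta\in X_f$ (because $f\omega=1_\Gamma\in\Z[\Gamma]\subseteq\Z^\Gamma$) and $x^\Delta$ is homoclinic to $0_{X_f}$ (because $\omega$ vanishes at infinity), and $a+\Z[\Gamma]f\mapsto a\cdot x^\Delta$ is an isomorphism $M_f=\Z[\Gamma]/\Z[\Gamma]f\xrightarrow{\ \sim\ }\Delta(X_f,\alpha_f)$ of $\Z[\Gamma]$-modules: it is onto since every point homoclinic to $0_{X_f}$ is $\rho$ of some $a\omega$ with $a\in\Z[\Gamma]$ (here $f\omega=1_\Gamma$ is used to invert $f$ on $\CC_0(\Gamma)$), and injective since $a\omega\in\Z^\Gamma$ forces $a\omega$ to be finitely supported, whence $a=f\cdot(a\omega)\in\Z[\Gamma]f$. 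Since the annihilator of $\Delta(X_f,\alpha_f)$ in $M_f$ is trivial, $\Delta(X_f,\alpha_f)$ is dense in $X_f$. Next I would prove that $(X_f,\alpha_f)$ is \emph{homoclinically expansive}: there exists $\varepsilon_0>0$ with $\sup_{\gamma\in\Gamma}d(\gamma z,0_{X_f})\geq\varepsilon_0$ for every nonzero $z\in\Delta(X_f,\alpha_f)$; equivalently, via the model above, $\inf\{\|a\omega+\Z^\Gamma\|_\infty:a\in\Z[\Gamma],\ a\notin\Z[\Gamma]f\}>0$. This is the step in which weak expansiveness --- rather than mere invertibility of $f$ in $\ell^1(\Gamma)$ --- is genuinely needed, both conditions (we-1) and (we-2) entering; I expect the extraction of $\varepsilon_0$ to be a principal technical point.

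For the Moore property, take an endomorphism $\tau$ that is not pre-injective, so $\tau(x)=\tau(y)$ for distinct homoclinic $x,y$. Since $\tau$ is uniformly continuous and $\Gamma$-equivariant, it is ``almost local'' --- for each $\delta>0$ there is a finite window $\Omega\subseteq\Gamma$ with $\tau(\cdot)$ determined pointwise up to $\delta$ by the restriction to the $\Omega$-translate --- so, as in the classical argument, the pair $(x,y)$ behaves like a pair of mutually erasable patterns: along a F\o{}lner set $F$ one may independently switch between the $x$- and $y$-configuration on each member of a sparse family of well-separated translates of a finite set supporting $x-y$, producing exponentially many (in $|F|$) points of $X_f$ on which $\tau$ is constant up to small error outside a set of density tending to $0$; the entropy count then gives $h_{\mathrm{top}}(\tau(X_f))<h$, so $\tau(X_f)\subsetneq X_f$. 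For the Myhill property, take $\tau$ not surjective; choosing $x^*\notin\tau(X_f)$ and using compactness, some metric ball about a finite-window restriction of $x^*$ is avoided by every point of $\tau(X_f)$ along every translate, and because $X_f$ is connected, hence topologically homogeneous with uniformly non-negligible balls, this forces $h_{\mathrm{top}}(\tau(X_f))<h$. Conversely, pre-injectivity of $\tau$ \emph{together with homoclinic expansiveness} constrains how much $\tau$ can contract the $F$-patterns of $X_f$ coming from a fixed boundary condition outside the interior of $F$ (any two such configurations differ by an element of $\Delta(X_f,\alpha_f)$, on which the separation constant $\varepsilon_0$ is available), so the number of $F$-patterns realised in $\tau(X_f)$ differs from that in $X_f$ only by a factor $\exp(o(|F|))$; hence $\tau$ pre-injective forces $h_{\mathrm{top}}(\tau(X_f))=h$, and combining with the previous sentence, $\tau$ pre-injective implies $\tau$ surjective.

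The heart of the proof is thus twofold: the derivation of homoclinic expansiveness from the two defining conditions of weak expansiveness, and --- in the Myhill direction --- the quantitative fact that, lacking genuine expansiveness, pre-injectivity together with homoclinic expansiveness still forces preservation of topological entropy. A formally cleaner alternative that sidesteps the analysis of non-homomorphic ``almost local'' maps would be to first establish topological rigidity (every endomorphism of $(X_f,\alpha_f)$ is affine), reduce after translating by a fixed point to a group endomorphism $\tau$, dualize it to multiplication by some $q+\Z[\Gamma]f$ on $M_f$, and observe that ``$\tau$ surjective'' and ``$\tau$ pre-injective'' then both reduce to injectivity of multiplication by $q$ on $M_f$; but the rigidity statement would itself rest on homoclinic expansiveness and connectedness, so the same obstacle reappears there.
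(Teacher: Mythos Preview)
Your ``formally cleaner alternative'' at the very end is exactly what the paper does, and it is not merely cleaner: it is the route that works. The paper proves topological rigidity (Theorem~4.2, Corollary~4.3) from homoclinic expansiveness, connectedness, and density of $\Delta(X_f,\alpha_f)$, via Bhattacharya's extension of van~Kampen's theorem; once every endomorphism is affine with linear part $x\mapsto rx$ for some $r\in\Z[\Gamma]$, Pontryagin duality and the identification $\Delta(X_f,\alpha_f)\cong\Z[\Gamma]/\Z[\Gamma]f^*$ make ``$\tau$ surjective'' and ``$\tau$ pre-injective'' both equivalent to injectivity of multiplication by $r^*$ on $M_f$. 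Two corrections to your sketch of this route: the fundamental homoclinic point is $\pi(\omega^*)$, not $\pi(\omega)$ (the defining condition for $X_f$ is $xf^*=0$, and one has $\omega^*f^*=(f\omega)^*=1_\Gamma$, whereas $\omega f^*$ is not known to be integral); correspondingly $\Delta(X_f,\alpha_f)\cong \Z[\Gamma]/\Z[\Gamma]f^*$, and it is the $*$-anti-isomorphism $M_f\to M_{f^*}$ that makes the diagram commute. Also, homoclinic expansiveness is not a ``principal technical point'': it is equivalent to (we-1) alone (Theorem~3.6), by a short lift-and-bound argument.

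Your primary entropy approach, by contrast, has a real gap in the Moore direction. The classical mutually-erasable-pattern argument uses that a cellular automaton is \emph{exactly} local: if $\tau(x)=\tau(y)$ with $x,y$ differing on $S$, then for any $z$ agreeing with $x$ on a buffer $S\Omega^{-1}\Omega$, swapping the $S$-pattern yields $z'$ with $\tau(z')=\tau(z)$, because the local rule cannot see past the buffer. Here $\tau$ is only almost-local, so this yields only $|\tau(z)(\gamma)-\tau(z')(\gamma)|<2\delta$; worse, when you build the $2^{|T|}$ points as $z_\epsilon=x_0+\sum_i\epsilon_i\gamma_i(y-x)$ (which is how one stays inside the subgroup $X_f$), the buffer hypothesis ``$z_\epsilon$ agrees with $\gamma_ix$ near $\gamma_iS$'' is simply false for generic $x_0$, and there is no reason for $\tau(z_\epsilon)$ to be near $\tau(z_{\epsilon'})$ on the translates $\gamma_iS\Omega^{-1}$. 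The only way to force $\tau(z_\epsilon)=\tau(z_{\epsilon'})$ for all $\epsilon$ is to know $\tau(z+w)-\tau(z)$ is independent of $z$, i.e.\ that $\tau$ is affine --- which is precisely what rigidity gives. So the entropy route, as written, secretly requires rigidity anyway; the paper just uses rigidity from the start and never touches entropy.
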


There are two main ingredients in our proof of Theorem \ref{t:main-result}.
The first one, Corollary \ref{c:endo-princ}, is a rigidity result (a generalization of \cite[Corollary 1]{bhattacharya})
for algebraic dynamical systems associated with weakly expansive polynomials and with connected phase space.
We use it to prove that, under the above conditions, every endomorphism of $(X_f,\alpha_f)$ is affine with linear part of the form
$x \mapsto r x$ for some $r \in \Z[\Gamma]$.
The second one, Theorem \ref{t:we-implies}, a generalization of \cite[Lemma 4.5]{lind-schmidt}),
asserts that, if $(X_f,\alpha_f)$ is weakly expansive, then its homoclinic group $\Delta(X_f,\alpha_f)$, equipped with
the induced action of $\Gamma$, is dense in $X_f$ and isomorphic, as a $\Z[\Gamma]$-module, to $\Z[\Gamma]/ \Z[\Gamma] f^*$, where
$f^* \in \Z[\Gamma]$ is defined by $(f^*)_\gamma \coloneqq f_{\gamma^{-1}}$ for all $\gamma \in \Gamma$.

Recall that a  dynamical system  $(X,\alpha)$ is called \emph{expansive} if there exists a constant
$\varepsilon_0  > 0$ such that, for every pair of distinct points $x,y \in X$, there exists an element
$\gamma \in \Gamma$ such that $d(\gamma  x,\gamma  y) >  \varepsilon_0$.
Such a constant $\varepsilon_0$ is called an \emph{expansivity constant} for $(X,\alpha,d)$.
The fact that $(X,\alpha)$ is expansive or not does not depend on the choice of the metric $d$.
For instance, the shift system $(A^\Gamma,\sigma)$ is expansive for every countable group $\Gamma$ whenever the alphabet $A$ is finite.
\par
Let $f \in \Z[\Gamma]$ and suppose that the associated  principal algebraic dynamical system $(X_f,\alpha_f)$ is expansive. In Corollary
\ref{c:espansive-vs-weakly expansive} we show that $f$ is weakly expansive, and from Theorem \ref{t:main-result} we thus deduce the following:

\begin{corollary}
\label{c:expansive}
Let $\Gamma$ be a countable Abelian group and $f \in \Z[\Gamma]$.
Suppose that the principal algebraic dynamical system $(X_f,\alpha_f)$ associated with $f$ is expansive and that $X_f$ is connected.
Then the  dynamical system $(X_f,\alpha_f)$ has the Moore-Myhill property. \qed
\end{corollary}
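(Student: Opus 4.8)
The plan is to obtain Corollary~\ref{c:expansive} as an immediate consequence of Theorem~\ref{t:main-result}. That theorem requires two hypotheses on $f$, namely weak expansiveness and connectedness of $X_f$. Connectedness is already assumed in the corollary, so the whole content to be proved is the implication ``$(X_f,\alpha_f)$ expansive $\Rightarrow$ $f$ weakly expansive'', which is precisely Corollary~\ref{c:espansive-vs-weakly expansive}; granting it, one applies Theorem~\ref{t:main-result} and is done.

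Thus the substantive step is to show that expansiveness of $(X_f,\alpha_f)$ forces $f$ to be weakly expansive in the sense of Definition~\ref{def:we}. I would argue as follows. First, invoke the known characterization of expansiveness for principal algebraic actions over a countable Abelian (indeed amenable) group: $(X_f,\alpha_f)$ is expansive if and only if $f$ is invertible in the convolution Banach algebra $\ell^1(\Gamma)$. (For $\Gamma=\Z^d$ this amounts, via Wiener's lemma, to the classical fact that $X_f$ is expansive exactly when $f$, viewed as a Laurent polynomial, has no zero on $\T^d$; the general case is due to Deninger and Schmidt.) Let $\omega\in\ell^1(\Gamma)$ be the inverse of $f$, so that $f\omega=\omega f=1_\Gamma$. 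Since every element of $\ell^1(\Gamma)$ vanishes at infinity, $\omega\in\CC_0(\Gamma)$, which is exactly condition (we-2). For (we-1), take $g\in\CC_0(\Gamma)$ with $fg=0$; convolving on the left by $\omega$ and using associativity of the convolution product --- legitimate here because $\omega\in\ell^1(\Gamma)$, $f\in\Z[\Gamma]\subseteq\ell^1(\Gamma)$, $g\in\CC_0(\Gamma)$, and $\CC_0(\Gamma)$ is a topological module over $\ell^1(\Gamma)$ --- yields $g=(\omega f)g=\omega(fg)=0$. Hence $f$ is weakly expansive, and Theorem~\ref{t:main-result} applies.

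The main obstacle, in my view, is the characterization of expansiveness through $\ell^1$-invertibility over an arbitrary countable Abelian group rather than merely $\Z^d$: the direction actually needed, ``expansive $\Rightarrow$ invertible'', is the delicate one and is typically proved by contraposition, using an approximate zero of $f$ (equivalently, a non-summable element of the homoclinic group $\Delta(X_f,\alpha_f)$) to produce pairs of distinct points of $X_f$ whose $\Gamma$-orbits stay uniformly close, in violation of expansivity. Everything else --- deriving (we-1) and (we-2) from the $\ell^1$-inverse, and the concluding appeal to Theorem~\ref{t:main-result} --- is routine.
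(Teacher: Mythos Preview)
Your proposal is correct and follows exactly the route taken in the paper: the corollary is deduced from Theorem~\ref{t:main-result} via Corollary~\ref{c:espansive-vs-weakly expansive}, whose proof in the paper is precisely the argument you give (Deninger--Schmidt's $\ell^1$-invertibility criterion, then $\omega\coloneqq f^{-1}\in\ell^1(\Gamma)\subset\CC_0(\Gamma)$ for (we-2), and $g=(\omega f)g=\omega(fg)=0$ for (we-1)). One small remark: the Deninger--Schmidt characterization (Theorem~\ref{t:pads-expansive}) holds for arbitrary countable groups, not only amenable ones, so there is no need to invoke amenability of $\Gamma$.
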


This result was obtained by the first two named authors in \cite{ccs-arXiv} and, shortly after, as a particular case of a
much more general Garden of Eden theorem for expansive actions (where $\Gamma$ is amenable and no connectedness of the phase
space is assumed) proved by the third author in \cite{Li-17}.

Recall that a polynomial $f \in \R[\Gamma]$ is said to be \emph{well-balanced} (cf.\ \cite[Definition 1.2]{Bowen-Li}) if:
\begin{enumerate}[{\rm (wb-1)}]
\item $\sum_{\gamma \in \Gamma} f_\gamma = 0$,
\item $f_\gamma \leq 0$ for all $\gamma \in \Gamma \setminus \{1_\Gamma\}$,
\item $f_\gamma = f_{\gamma^{-1}}$ for all $\gamma \in \Gamma$ (i.e., $f$ is \emph{self-adjoint}),
\item and $\supp(f) \coloneqq \{\gamma \in \Gamma: f_\gamma \neq 0\}$, the \emph{support} of $f$, generates $\Gamma$.
\end{enumerate}

If $f\in \Z[\Gamma]$ is well-balanced, the associated dynamical system $(X_f, \Gamma)$ is called a \emph{harmonic model}.
For $\Gamma = \Z^d$ and $f = 2d - \sum_{i=1}^d (u_i + u_i^{-1}) \in \Z[u_1, u_1^{-1}, \ldots, u_d, u_d^{-1}] = \Z[\Z^d]$, the
corresponding harmonic model shares interesting measure theoretic and entropic properties with other different models in mathematical
physics, probability theory, and dynamical systems such as the Abelian sandpile model, spanning trees,
and the dimer models \cite{schmidt-verbitskiy, Bowen-Li}.
Since a well-balanced polynomial $f \in \Z[\Gamma]$, with $\Gamma$ infinite countable not virtually $\Z$ or $\Z^2$,
is weakly expansive (cf.\ Proposition \ref{p:wb-implies-we}), from Theorem \ref{t:main-result} we deduce:

\begin{corollary}[Garden of Eden theorem for harmonic models]
\label{c:harmomic}
Let $\Gamma$ be an infinite countable Abelian group which is not virtually $\Z$ or $\Z^2$  (e.g.~$\Gamma = \Z^d$, with $d \geq 3$).
Suppose that $f \in \Z[\Gamma]$ is well-balanced and that $X_f$ is connected.
Then the  dynamical system $(X_f,\alpha_f)$ has the Moore-Myhill property. \qed
\end{corollary}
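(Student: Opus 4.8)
The plan is to obtain the corollary as an immediate consequence of Theorem~\ref{t:main-result}, so all the work reduces to verifying its two hypotheses for a well-balanced $f \in \Z[\Gamma]$ over a group $\Gamma$ of the stated type. Connectedness of $X_f$ is assumed outright, so the only thing to check is that $f$ is weakly expansive; this is precisely the content of Proposition~\ref{p:wb-implies-we}, which may be quoted. What follows is a sketch of how I would prove that proposition, since that is where the real difficulty lies. First I would normalize $f$: conditions (wb-1)--(wb-3) give $f_{1_\Gamma} = -\sum_{\gamma \neq 1_\Gamma} f_\gamma = \sum_{\gamma \neq 1_\Gamma} |f_\gamma| > 0$, so $f = c\,(1_\Gamma - p)$, where $c \coloneqq f_{1_\Gamma}$ and $p \coloneqq \sum_{\gamma \neq 1_\Gamma} (|f_\gamma|/c)\,\gamma$ is a symmetric probability measure on $\Gamma$ whose support, by (wb-4), is finite and generates $\Gamma$. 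Since convolution by $f$ and by $1_\Gamma - p$ differ only by the invertible real scalar $c$, it suffices to treat $1_\Gamma - p$.

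For (we-1): if $g \in \CC_0(\Gamma)$ satisfies $(1_\Gamma - p)g = 0$, then $g = pg$, i.e.\ $g$ is harmonic for the random walk driven by $p$. Because $g$ vanishes at infinity it attains both its supremum and its infimum on $\Gamma$; applying the mean-value identity $g_\gamma = \sum_{\eta} p_\eta\, g_{\gamma\eta}$ at an extremal point $\gamma$ forces $g$ to be constant on the subgroup generated by $\supp(p)$, hence on all of $\Gamma$, and hence identically $0$ since $\Gamma$ is infinite; this is a standard discrete maximum-principle argument. For (we-2): I need $\omega \in \CC_0(\Gamma)$ with $(1_\Gamma - p)\omega = 1_\Gamma$, namely the Green's function $\omega = \sum_{n \geq 0} p^{*n}$ of the walk. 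Here the hypothesis on $\Gamma$ enters: since $\supp(f)$ is finite and generates $\Gamma$, the group is finitely generated, so $\Gamma \cong \Z^r \times \Lambda$ with $\Lambda$ finite, and being infinite but not virtually $\Z$ or $\Z^2$ forces $r \geq 3$. For such a $\Gamma$ the symmetric, finitely supported, generating walk $p$ is transient, with return probabilities $p^{*n}_{1_\Gamma} = O(n^{-r/2})$, so $\omega$ is a finite nonnegative function; and combining this summable uniform bound with the fact that $p^{*n}_\gamma \to 0$ as $\gamma \to \infty$ for each fixed $n$ (via a local limit theorem / Gaussian heat-kernel estimate and dominated convergence) gives $\omega_\gamma \to 0$, i.e.\ $\omega \in \CC_0(\Gamma)$, which is (we-2).

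With Proposition~\ref{p:wb-implies-we} in hand, Theorem~\ref{t:main-result} applies verbatim and yields both the Moore and the Myhill properties for $(X_f,\alpha_f)$, completing the proof. The main obstacle is the verification of (we-2): establishing transience and, more delicately, the decay at infinity of the Green's function, which rests on return-probability estimates of Varopoulos--P\'olya type together with a local limit theorem, and on reducing the ``virtually $\Z^r$'' case to $\Z^r$ itself (for instance by passing to the finite-index subgroup $\Z^r$ and comparing Green's functions, or by invoking the growth-type return bounds directly). By comparison, (we-1), the normalization of $f$, and the final appeal to Theorem~\ref{t:main-result} are routine.
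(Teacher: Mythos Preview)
Your proposal is correct and follows the same route as the paper: the corollary is deduced from Theorem~\ref{t:main-result} once one knows that well-balanced $f$ over such $\Gamma$ are weakly expansive, which is Proposition~\ref{p:wb-implies-we}. Your sketch of that proposition also matches the paper's: (we-1) via the maximum principle (Lemma~\ref{l:f-star-injective}) and (we-2) via the Green's function $\omega = c^{-1}\sum_{n\ge 0} p^{*n}$; the only difference is that for (we-2) the paper simply cites \cite{Bowen-Li} and Varopoulos for $\omega \in \CC_0(\Gamma)$ in general, whereas you exploit the structure theorem for finitely generated Abelian groups to reduce to $\Z^r$ with $r\ge 3$ and appeal to explicit return-probability bounds, which is a slightly more hands-on argument sufficient for the Abelian case at hand. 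One small remark: for fixed $n$ the vanishing of $p^{*n}_\gamma$ at infinity is immediate since $p$ is finitely supported, so no local limit theorem is needed there; what you really need is the uniform domination $\sup_\gamma p^{*n}_\gamma \le p^{*2\lfloor n/2\rfloor}_{1_\Gamma} = O(n^{-r/2})$, which follows from symmetry and the return-probability estimate.
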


Let $(X,\alpha)$ be an algebraic dynamical system and $1 \leq p \leq \infty$.
In \cite{Chung-Li} (see also Subsection \ref{s:p-exp-and-p-homoclinic}), the notion of $p$-homoclinic group
(denoted $\Delta^p(X,\alpha) \subset X$) associated with $(X,\alpha)$, was introduced.
We then say that a map $\tau \colon X \to X$ is $p$-pre-injective if the restriction of $\tau$ to each
coset of the $p$-homoclinic group $\Delta^p(X,\alpha)$ is injective.
Note that $\Delta^\infty(X,\alpha) = \Delta(X,\alpha)$ so that $\infty$-pre-injectivity is the same thing as pre-injectivity.
\par
If $\Gamma = \Z^d$, then any polynomial $f \in \R[\Gamma]$ may be regarded, by duality, as a function
on $\widehat{\Gamma} = \T^d$.
We denote by $Z(f) \coloneqq \{(t_1, t_2, \ldots, t_d) \in \T^d: f(t_1, t_2, \ldots, t_d) = 0\}$ its zero-set.
Recall that an irreducible polynomial $f$ is \emph{atoral} \cite[Definition 2.1]{lind-schmidt-verbitskiy-2} if there is some
$g\in \Z[\Gamma]$ such that $g\not \in \Z[\Gamma] f$ and $Z(f)\subset Z(g)$.
\par
We are now in position to state the following:

\begin{theorem}[A Garden of Eden theorem for irreducible atoral polynomials]
\label{t:GOE-irr-finite-zero-set}
Let $f\in \Z[\Z^d]$ be an irreducible atoral polynomial such that $Z(f)$ is contained in the image of the intersection of $[0,1]^d$ and a finite union of
hyperplanes in $\R^d$ under the natural quotient map $\R^d \to \T^d$ (e.g., when $d \geq 2$ such that $Z(f)$ is finite).
Let $\tau \colon X_f \to X_f$ be a $\Gamma$-equivariant continuous map. Then the following conditions are equivalent:
\begin{enumerate}[{\rm (a)}]
\item $\tau$ is surjective,
\item $\tau$ is pre-injective,
\item $\tau$ is $p$-pre-injective for all $1 \leq p \leq \infty$,
\item $\tau$ is $1$-pre-injective.
\end{enumerate}
In particular, $(X_f, \alpha_f)$ satisfies the Moore-Myhill property.
\end{theorem}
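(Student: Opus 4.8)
The plan is to use a rigidity theorem to bring every equivariant $\tau$ into affine form, and then to reduce all four conditions to the single divisibility statement $r\notin\Z[\Gamma]f$. Begin with some algebra. Since $\Z[\Gamma]=\Z[u_1,u_1^{-1},\dots,u_d,u_d^{-1}]$ is a unique factorization domain and $f$ is irreducible, $f$ is primitive and prime; hence $M_f=\Z[\Gamma]/\Z[\Gamma]f$ is torsion-free, so $X_f$ is connected, and $\End_{\Z[\Gamma]}(M_f)=M_f$ — the endomorphisms of the cyclic module $M_f$ are precisely the multiplications by its elements. Next, because $f$ is atoral and irreducible, $(X_f,\alpha_f)$ is mixing of all orders, so Bhattacharya's rigidity theorem \cite{bhattacharya} for mixing algebraic actions on connected compact abelian groups applies: every $\Gamma$-equivariant continuous map $\tau\colon X_f\to X_f$ is affine. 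Concretely, $x_0:=\tau(0_{X_f})$ is an $\alpha_f$-fixed point, the map $\tau-x_0$ is a continuous $\Gamma$-equivariant endomorphism of $X_f$, and by Pontryagin duality together with $\End(M_f)=M_f$ it has the form $x\mapsto rx$ for some $r\in\Z[\Gamma]$, unique modulo $\Z[\Gamma]f$.

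Since translation by $x_0$ is a homeomorphism, $\tau$ and $x\mapsto rx$ have the same image, and for any subgroup $H\subseteq X_f$ the restriction of $\tau$ to each coset of $H$ is injective if and only if $\{x\in H:rx=0\}=\{0\}$. Taking $H$ to be the homoclinic group $\Delta:=\Delta(X_f,\alpha_f)$, respectively the $p$-homoclinic groups $\Delta^p:=\Delta^p(X_f,\alpha_f)$, this identifies (a) with ``$x\mapsto rx$ maps $X_f$ onto $X_f$'', (b) with ``$rx=0$, $x\in\Delta$ $\Rightarrow$ $x=0$'', (c) with the same assertion for every $\Delta^p$, and (d) with the same assertion for $\Delta^1$. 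I would then prove that each of these is equivalent to $r\notin\Z[\Gamma]f$. For (a): $x\mapsto rx$ is a continuous endomorphism of the compact group $X_f$, hence has closed image, so it is onto iff it has dense image iff its dual, multiplication by $r$ on $M_f$, is injective; and since $f$ is prime, $rb\in\Z[\Gamma]f$ with $r\notin\Z[\Gamma]f$ forces $b\in\Z[\Gamma]f$, so this injectivity holds precisely when $r\notin\Z[\Gamma]f$. For (b)--(d), the essential input is the structure of the homoclinic groups: for $f$ atoral, $\Delta$ is a nonzero (indeed dense) $\Z[\Gamma]$-submodule of $X_f$ isomorphic as a $\Z[\Gamma]$-module to $M_f$, and the hypothesis on $Z(f)$ guarantees that the fundamental homoclinic point is summable, so that $\Delta^1$ — and hence, via $\Delta^1\subseteq\Delta^p\subseteq\Delta^\infty=\Delta$, every $\Delta^p$ — is likewise a dense $\Z[\Gamma]$-submodule of $X_f$ isomorphic to $M_f$ (these facts belong to the theory of homoclinic points of atoral $\Z^d$-actions, cf.\ \cite{lind-schmidt-verbitskiy-2}, \cite{Chung-Li}, \cite{lind-schmidt}). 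If $r\notin\Z[\Gamma]f$ and $x\in\Delta$ satisfies $rx=0$, then transporting to $M_f$ and using the primality of $f$ forces $x=0$; hence (b) holds, and a fortiori (c) and (d). Conversely, if $r\in\Z[\Gamma]f$ then $\tau$ is the constant map $x_0$, which is not onto (as $X_f$ is infinite) and is not injective on $\Delta$, on any $\Delta^p$, or on $\Delta^1$ (all of which are nonzero). Therefore (a) $\Leftrightarrow$ (b) $\Leftrightarrow$ (c) $\Leftrightarrow$ (d) $\Leftrightarrow$ $(r\notin\Z[\Gamma]f)$; in particular (a) $\Leftrightarrow$ (b) is the Moore--Myhill property for $(X_f,\alpha_f)$.

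The main obstacle is the rigidity step. Unlike the situation of Theorem~\ref{t:main-result}, here $(X_f,\alpha_f)$ is not expansive — its dual variety $Z(f)$ is nonempty — so the rigidity tools used there are unavailable, and one must instead establish and legitimately apply higher-order-mixing rigidity on a connected compact group, which in turn requires showing that atoral irreducibility entails mixing of all orders. A second, more analytic, difficulty is the identification $\Delta^1(X_f,\alpha_f)\cong M_f$, that is, the summability of the Fourier coefficients of $1/f$: this is exactly where the hypothesis that $Z(f)$ lies in the image of $[0,1]^d$ intersected with a finite union of hyperplanes is used, by reducing the analysis of the singularities of $1/f$ to the hyperplane case (and, in particular, covering $d\geq 2$ with $Z(f)$ finite). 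Granting these two inputs, the remaining steps are the routine duality and factorization computations indicated above.
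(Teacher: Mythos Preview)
Your overall architecture---reduce $\tau$ to an affine map $x\mapsto rx+x_0$ and then show that each of (a)--(d) is equivalent to $r\notin\Z[\Gamma]f$---is exactly the paper's. But you have misidentified both of the two analytic inputs, and in particular you have the role of the $Z(f)$ hypothesis backwards.

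\textbf{Rigidity.} The result you invoke (mixing of all orders on a connected group $\Rightarrow$ topological rigidity) is not in \cite{bhattacharya}; his Theorem~1 gives only the decomposition $\tau=\lambda+E\circ\Phi$, and his Corollary~1, which the paper generalizes, is under a stronger hypothesis than mixing. The paper obtains rigidity via Corollary~\ref{c:endo-princ}, which needs \emph{homoclinic expansivity} together with density of $\Delta$. The hyperplane condition on $Z(f)$ enters precisely here: by the Linnell--Puls result quoted in Example~\ref{e:examples-HE}(2) (equivalently Theorem~\ref{t:char-homo-exp}), that condition is exactly equivalent to $(X_f,\alpha_f)$ being homoclinically expansive. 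Atorality then supplies density of $\Delta^1$ (hence of $\Delta$) via Theorem~\ref{t:LSV}, and irreducibility gives connectedness, so Corollary~\ref{c:endo-princ} applies. Homoclinic expansivity is also what drives the Moore direction (a)$\Rightarrow$(b), through Corollary~\ref{c:homo-exp-Moore}. Your mixing route would not use the $Z(f)$ hypothesis at all, which should already signal that something is off.

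\textbf{The structure of $\Delta^1$.} Your claim that ``the fundamental homoclinic point is summable'', i.e.\ that the Fourier coefficients of $1/f$ lie in $\ell^1(\Z^d)$, is false whenever $Z(f)\neq\varnothing$: by Theorem~\ref{t:pads-expansive} (Wiener), $f^{-1}\in\ell^1(\Gamma)$ holds iff $(X_f,\alpha_f)$ is expansive iff $Z(f)=\varnothing$, contradicting the non-expansive regime you yourself flag. Consequently one cannot expect $\Delta^1(X_f,\alpha_f)\cong M_f$. What \cite{lind-schmidt-verbitskiy-2} actually gives is $\Delta^1(X_f,\alpha_f)\cong\mathbf m_f/\Z[\Gamma]f$ with $\mathbf m_f=\Z[\Gamma]\cap f\,\ell^1(\Gamma)$, and atorality guarantees only that $\mathbf m_f\supsetneq\Z[\Gamma]f$. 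The paper therefore fixes some $g\in\mathbf m_f\setminus\Z[\Gamma]f$ and, using irreducibility of $f$, gets an \emph{injection} $\overline\kappa\colon \widehat{X_f}\hookrightarrow\Delta^1(X_f,\alpha_f)$, $h+\Z[\Gamma]f\mapsto x^{(hg)}$, intertwining $\chi\mapsto r^*\chi$ with $x\mapsto rx$. Injectivity of $\overline\kappa$ is all one needs to run your commutative-diagram argument for (d)$\Rightarrow$(a); you do not need, and will not get, an isomorphism.

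In short: the $Z(f)$ hyperplane hypothesis buys you homoclinic expansivity (hence rigidity and the Moore implication), not summability of $1/f$; and for (d)$\Rightarrow$(a) you should work with the embedding $\widehat{X_f}\hookrightarrow\Delta^1$ coming from $\mathbf m_f$ rather than a purported isomorphism $\Delta^1\cong M_f$. With those two corrections your outline becomes the paper's proof.
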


Our motivation for the present work originated from a sentence of Gromov \cite[p.~195]{gromov-esav}
suggesting that the Garden of Eden theorem could be extended to dynamical systems with a suitable hyperbolic flavor other than shifts and subshifts.
A first step in that direction was made in~\cite{csc-anosov-tori}, where the two first named authors proved that all Anosov diffeomorphisms on tori generate $\Z$-actions with the Moore-Myhill property, and another one in~\cite{csc-ijm-2015},
where sufficient conditions for expansive actions of countable amenable groups to have the Myhill property were presented. Finally,
in \cite{Li-17} the third named author proved the very general Garden of Eden theorem for expansive actions of amenable groups we alluded to above.
\par
The paper is organized as follows.
Section~\ref{sec:background} introduces notation and collects background material on Pontryagin duality and algebraic dynamical systems.
In particular, in Subsection \ref{sec:homoclinic} we present a characterization of the homoclinic group.
In Section \ref{s:weak-forms} we consider several weak forms of expansivity: first we recall from \cite{Chung-Li} the notions
of $p$-expansivity ($1 \leq p \leq \infty$) and $p$-homoclinicity for general algebraic actions. Then we
introduce and study the notion of homoclinically expansive action: when the algebraic dynamical system is finitely generated, homoclinic expansivity is a stronger condition than $p$-expansivity for $1 \leq p < \infty$ (Proposition \ref{P-h expansive basic}), and a characterization is derived (Theorem \ref{t:char-homo-exp}). Then, we study principal algebraic actions associated with weakly expansive polynomials with an emphasis on the corresponding homoclinic group. In Subsection \ref{ss:expansivity} we then show that polynomials yielding principal algebraic expansive actions are weakly expansive and, in Subsection \ref{sec:harmonic}, we prove that well-balanced polynomials, with not virtually $\Z$ or $\Z^2$ infinite countable group, are weakly expansive as well.
In Section~\ref{sec:affine}, we discuss topological rigidity of algebraic dynamical systems associated with weakly expansive polynomials.
The proof of Theorem~\ref{t:main-result} is then given in Section~\ref{sec:proof}.
In the following section we discuss the notion of atorality for irreducible polynomials in $\Z[\Z^d]$, we present a few examples, and give the proof of Theorem \ref{t:GOE-irr-finite-zero-set}.
In the last section, we collect some final remarks. In particular, we exhibit some examples showing that Theorem~\ref{t:main-result} becomes false if weak expansivity of $f \in \Z[\Gamma]$ is replaced by the weaker hypothesis that the homoclinic group $\Delta(X_f,\alpha_f)$ is dense in $X_f$, or that the dynamical system $(X_f,\alpha_f)$ is mixing. We also introduce and discuss the notions of $p$-pre-injectivity,
$p$-Moore, and $p$-Myhill properties for algebraic actions, and prove some variations on the Garden of Eden theorem in this framework.\\

\noindent
{\bf Acknowledgments}. Hanfeng Li was partially supported by NSF and NSFC grants.

\section{Background material and preliminaries}
\label{sec:background}

\subsection{Group actions}
Let $\Gamma$ be a countable group.
We use multiplicative notation for the group operation in $\Gamma$ and denote by $1_\Gamma$ its identity element.
\par
An \emph{action} of  $\Gamma$ on a set $X$
is a map $\alpha \colon \Gamma \times X \to X$ such that
$\alpha(1_\Gamma,x) = x$ and
$\alpha(\gamma_1,\alpha(\gamma_2,x)) = \alpha(\gamma_1 \gamma_2,x)$ for all $\gamma_1,\gamma_2 \in \Gamma$ and $x \in X$.
In the sequel, to simplify, we shall write
$\gamma x$ instead of $\alpha(\gamma, x)$,  if there is no risk of confusion.
\par
If $\alpha$ is an action of $\Gamma$ on a set $X$, we denote by $\Fix(X,\alpha)$ the set of points of $X$ that are \emph{fixed} by $\alpha$, i.e., the set of points $x \in X$ such that $\gamma x = x$ for all
$\gamma \in \Gamma$.
\par
If $\Gamma$ acts on two sets $X$ and $Y$, a map
$\tau \colon X \to Y$ is said to be $\Gamma$-\emph{equivariant} if one has
$\tau(\gamma x) = \gamma \tau(x)$ for all $\gamma \in \Gamma$ and $x \in X$.

\subsection{Convolution}
\label{ss:convolution}
Let $\Gamma$ be a countable group.
We denote by $\ell^\infty(\Gamma)$  the vector space consisting of all formal series
$$
f = \sum_{\gamma \in \Gamma} f_\gamma \gamma,
$$
with coefficients $f_\gamma \in \R$ for all $\gamma \in \Gamma$ and
$$
\Vert f \Vert_\infty \coloneqq \sup_{\gamma \in \Gamma} |f_\gamma| < \infty.
$$
For $1 \leq p < \infty$ we denote by $\ell^p(\Gamma)$ the vector subspace of $\ell^\infty(\Gamma)$
consisting of all $f \in \ell^\infty(\Gamma)$ such that
$$
\Vert f \Vert_p \coloneqq \left(\sum_{\gamma \in \Gamma} |f_\gamma|^p\right)^{\frac{1}{p}} < \infty.
$$
Note that $\ell^1(\Gamma) \subset \ell^p(\Gamma) \subset \ell^q(\Gamma) \subset \ell^\infty(\Gamma)$ for all $1 < p < q < \infty$.
When $f \in \ell^\infty(\Gamma)$ and $g \in \ell^1(\Gamma)$ (resp.\ $f \in \ell^1(\Gamma)$ and $g \in \ell^\infty(\Gamma)$)
we define the \emph{convolution product} $f g \in \ell^\infty(\Gamma)$ by setting
\begin{equation}
\label{e:convolution}
(f g)_\gamma \coloneqq
\sum_{\substack{\gamma_1, \gamma_2 \in \Gamma:\\ \gamma_1\gamma_2 = \gamma}}  f_{\gamma_1} g_{\gamma_2}
= \sum_{\delta \in \Gamma}  f_{\gamma \delta^{-1}}  g_\delta
\end{equation}
for all $\gamma \in \Gamma$.
Note that $\|f g \|_\infty \leq \|f\|_\infty \cdot \|g\|_1$ (resp.\ $\|f g \|_\infty \leq \|f\|_1 \cdot \|g\|_\infty$).
We have the associativity rule
\begin{equation}
\label{e:associativity}
(f g) h = f (g h)  \mbox{ for all $f \in \ell^\infty(\Gamma)$, $g,h \in \ell^1(\Gamma)$ (resp.\ $f,g \in \ell^1(\Gamma)$, $h \in \ell^\infty(\Gamma)$).}
\end{equation}

We denote by $\R[\Gamma] = \{f \in \ell^\infty(\Gamma): f_\gamma = 0$ for all but finitely many $\gamma \in \Gamma\}$ and by $\Z[\Gamma] = \{f \in \R[\Gamma]: f_\gamma \in \Z$ for all $\gamma \in \Gamma\}$ the \emph{real} and, respectively, the \emph{integral group ring} of
$\Gamma$. Observe that the convolution product extends the group operation on $\Gamma \subset \Z[\Gamma] \subset \R[\Gamma]$.
\par
Note also that, as a $\Z$-module, $\Z[\Gamma]$ is free with base $\Gamma$.
\par
If we take $\Gamma = \Z^d$, then $\Z[\Gamma]$ is the Laurent polynomial ring
$R_d \coloneqq \Z[u_1^{\pm 1}, \dots, u_d^{\pm 1}]$ on $d$ commuting indeterminates
$u_1,\dots, u_d$.
\par

Recall that $\CC_0(\Gamma)$ denotes the vector space consisting of all functions $f \colon \Gamma \to
\R$ vanishing at infinity: we express this condition by writing $\lim_{\gamma \to \infty} f(\gamma) = 0$.
We then have the inclusions
\begin{equation}
\label{e:inclusion}
\Gamma \subset \Z[\Gamma] \subset \R[\Gamma] \subset \ell^1(\Gamma) \subset \ell^p(\Gamma) \subset \CC_0(\Gamma) \subset \ell^\infty(\Gamma),
\end{equation}
for all $1 \leq p < \infty$.
Moreover, there is a natural involution $f \mapsto f^*$ on $\ell^\infty(\Gamma)$ defined by
\begin{equation}
\label{e:involution}
(f^*)_\gamma \coloneqq  f_{\gamma^{-1}}
\end{equation}
for all $f \in \ell^\infty(\Gamma)$ and $\gamma \in \Gamma$. Observe that every set in \eqref{e:inclusion} is $*$-invariant and that
\begin{equation}
\label{e:invoution-product}
(fg)^* = g^*f^* \mbox{ for all $f \in \ell^\infty(\Gamma)$ and $g \in \ell^1(\Gamma)$ (resp.\ $f \in \ell^1(\Gamma)$ and $g \in \ell^\infty(\Gamma)$).}
\end{equation}
The normed space $(\ell^1(\Gamma),\|\cdot\|_1)$ is a unital Banach *-algebra for the convolution product
and the involution. The unity element of $\ell^1(\Gamma)$ is $1_\Gamma$.

\begin{lemma}
\label{l:associative}
Let $f,h \in \R[\Gamma]$ and $g \in \ell^\infty(\Gamma)$. Then $(fg)h = f(gh)$.
\end{lemma}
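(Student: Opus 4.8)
The plan is to exploit the finite support of $f$ and $h$, which makes every convolution product appearing in the identity well defined and, more importantly, reduces the whole claim to the case of single group elements by bilinearity.

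First I would check that all four products make sense. Since $f,h \in \R[\Gamma] \subset \ell^1(\Gamma)$ and $g \in \ell^\infty(\Gamma)$, the products $fg$ and $gh$ are defined and belong to $\ell^\infty(\Gamma)$; then $(fg)h$ and $f(gh)$ are again defined and lie in $\ell^\infty(\Gamma)$, using once more that $f,h \in \ell^1(\Gamma)$. Thus both sides of the asserted equality are elements of $\ell^\infty(\Gamma)$, and it remains to compare their coefficients at every $\gamma \in \Gamma$.

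Next I would observe that, for fixed $g$, the map $(f,h) \mapsto (fg)h$ is $\R$-bilinear: the convolution product is linear in each of its two arguments, so $fg$ is linear in $f$, and $(fg)h$ is linear in $h$ and in $fg$, hence in $f$. The same reasoning shows that $(f,h) \mapsto f(gh)$ is $\R$-bilinear. Since $\R[\Gamma]$ is spanned over $\R$ by the group elements of $\Gamma$, it therefore suffices to verify the identity when $f = a$ and $h = b$ for arbitrary $a,b \in \Gamma$.

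Finally, for a single group element $a \in \Gamma$ one computes directly from \eqref{e:convolution} that $ag$ is the left translate of $g$, namely $(ag)_\gamma = g_{a^{-1}\gamma}$, and similarly $gb$ is the right translate, $(gb)_\gamma = g_{\gamma b^{-1}}$. Hence $((ag)b)_\gamma = (ag)_{\gamma b^{-1}} = g_{a^{-1}\gamma b^{-1}}$ and $(a(gb))_\gamma = (gb)_{a^{-1}\gamma} = g_{a^{-1}\gamma b^{-1}}$ agree for every $\gamma \in \Gamma$, which finishes the proof. There is essentially no obstacle here; the only point requiring a little care is the passage through bilinearity — or, if one prefers to argue instead by a direct interchange of the order of summation, the observation that the finite supports of $f$ and $h$ force all the double sums involved to be finite, so that no convergence issue can arise.
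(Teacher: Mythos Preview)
Your proof is correct and follows essentially the same approach as the paper's: reduce by bilinearity to the case $f=a$, $h=b$ with $a,b\in\Gamma$, and then verify the identity coefficientwise using that left and right convolution by group elements are translations. The paper's argument is just a terser version of yours, omitting the well-definedness check and writing the chain of equalities in a single line.
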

\begin{proof}
By linearity we can reduce to the case when $f = \gamma$ and $h = \delta$ with $\gamma, \delta \in \Gamma$.
For $\eta \in \Gamma$ we have
\[
((\gamma g)\delta)_\eta = (\gamma g)_{\eta \delta^{-1}} = g_{\gamma^{-1}(\eta \delta^{-1})} =_* g_{(\gamma^{-1}\eta) \delta^{-1}} = (g \delta)_{\gamma^{-1}\eta} = (\gamma (g\delta))_\eta,
\]
where $=_*$ follows from the associative rule in $\Gamma$.
\end{proof}

Let now $k,n \in \N$ and denote by $\M_{n,k}(\Z[\Gamma]) \coloneqq \{(a^{ij})_{\substack{1 \leq i \leq n\\ 1 \leq j \leq k}}: a^{ij} \in \Z[\Gamma]\}$ the space of all $n$-by-$k$ matrices with coefficients in the group ring $\Z[\Gamma]$. We identify
$\Z[\Gamma]^k$ (resp.\ $\Z[\Gamma]^n$) and $\M_{1,k}(\Z[\Gamma])$ (resp.\ $\M_{1,n}(\Z[\Gamma])$) so that if $g = (g^1,g^2, \ldots, g^n) \in
\Z[\Gamma]^n$ and $A = (a^{ij})_{\substack{1 \leq i \leq n\\ 1 \leq j \leq k}} \in \M_{n,k}(\Z[\Gamma])$, the element
$gA \in \Z[\Gamma]^k$ is defined by $(gA)^j \coloneqq \sum_{i=1}^n g^i a^{ij} \in \Z[\Gamma]$, that is,
\begin{equation}
\label{e:def-A-convolution}
(gA)^j_\gamma = \sum_{i=1}^n \sum_{\eta \in \Gamma} g^i_{\gamma \eta} a^{ij}_{\eta^{-1}}
\end{equation}
for all $j=1,2, \ldots,k$ and $\gamma \in \Gamma$.
Given $A = (a^{ij})_{\substack{1 \leq i \leq n\\ 1 \leq j \leq k}} \in \M_{n,k}(\Z[\Gamma])$, we define
\begin{equation}
\label{e:def-A-star}
A^* = \left((a^{ji})^*\right)_{\substack{1 \leq i \leq k\\ 1 \leq j \leq n}} \in \M_{k,n}(\Z[\Gamma]).
\end{equation}
Note that when $k=n=1$ and $A = f \in \Z[\Gamma]$, then \eqref{e:def-A-star} reduces to \eqref{e:involution}.
Also, we set
\[
\|A\|_\infty = \sup_{\substack{1 \leq i \leq n\\ 1 \leq j \leq k}}
\sup_{\gamma \in \Gamma} |a^{ij}_\gamma|
\]
and
\[
\|A\|_1 = \sum_{\substack{1 \leq i \leq n\\ 1 \leq j \leq k}}
\|a^{ij}\|_1 = \sum_{\substack{1 \leq i \leq n\\ 1 \leq j \leq k}} \sum_{\gamma \in \Gamma} |a^{ij}_\gamma|.
\]

\subsection{Pontryagin duality}
\label{subsec:pontryagin}
 Let us briefly review some basic facts and results regarding Pontryagin duality.
For more details and complete proofs,
the reader is refered to~\cite{morris}.
\par
Let $X$ be an LCA group, i.e., a locally compact, Hausdorff, Abelian topological group.
A continuous group morphism from $X$ into the circle $\T \coloneqq \R/\Z$
is called a \emph{character} of $X$.
The set $\widehat{X}$ of all characters of $X$,
equipped with pointwise multiplication and the topology of uniform convergence on compact sets,
is also an LCA  group,
  called  the \emph{character group} or  \emph{Pontryagin dual}  of $X$.
  \par
The natural map $\langle \cdot, \cdot  \rangle \colon \widehat{X} \times X \to \T$,
given by $\langle \chi, x \rangle = \chi(x)$ for all $x \in X$ and $\chi \in \widehat{X}$ is bilinear and non-degenerate.
Moreover,
the evaluation map $\iota \colon X \to \widehat{\widehat{X}}$,
defined by $\iota(x)(\chi) \coloneqq \langle\chi,x\rangle$,
is a topological group    isomorphism  from $X$ onto its bidual
$\widehat{\widehat{ X}}$.
This canonical isomorphism is used to identify $X$ with $\widehat{\widehat{ X}}$.
\par
The space $X$ is
compact (resp.~discrete, resp.~metrizable, resp.~$\sigma$-compact)
if and only if $\widehat{X}$ is
discrete (resp.~compact, resp.~$\sigma$-compact, resp.~metrizable).
In particular, $X$ is compact and metrizable if and only if $\widehat{X}$ is discrete and countable.
When $X$ is compact, $X$ is connected if and only if $\widehat{X}$ is a \emph{torsion-free} group  (i.e., a group with  no non-trivial elements of finite order).
\par
If $X$ is an LCA group and $Y$ a closed subgroup of $X$, then $X/Y$ is an LCA group
whose Pontryagin dual is canonically isomorphic, as a topological group,
to the closed subgroup $Y^\perp$ of $\widehat{X}$ defined by
\[
Y^\perp \coloneqq \{\chi \in \widehat{X} : \langle \chi, y \rangle = 0 \text{ for all } y \in Y\}.
\]
\par
Let $X, Y$ be LCA groups and $\varphi \colon X \to Y$ a continuous group morphism.
The map $\widehat{\varphi} \colon \widehat{Y} \to \widehat{X}$, defined by
$\widehat{\varphi}(\chi) \coloneqq  \chi \circ \varphi$ for all $\chi \in \widehat{Y}$ is a continuous group morphism, called the \emph{dual} of $\varphi$.
If we identify $X$ and $Y$ with their respective biduals,
then $\widehat{\widehat{\varphi}} =\varphi$.
If $\varphi$ is surjective, then $\widehat{\varphi}$ is injective. Conversely, if $\varphi$ is injective, then $\widehat{\varphi}$ has
dense image (and it may happen that $\widehat{\varphi}$ is not surjective; however, if $\varphi$ is both injective and open, then $\widehat{\varphi}$ is surjective).
As a consequence, if $X$ and $Y$ are either both compact or both discrete, then  $\varphi$ is injective (resp.~surjective) if and only if $\widehat{\varphi}$ is surjective (resp.~injective)
\cite[Proposition~30]{morris}.
\par
Let $X$ be an LCA group
and suppose that there is a countable group $\Gamma$ acting continuously on $X$ by  group automorphisms. By linearity, this action induces a left $\Z[\Gamma]$-module structure on $X$.
There is a   dual action of $\Gamma$ on $\widehat{X}$ by continuous group automorphisms, defined by
\[
\gamma  \chi(x) \coloneqq \chi(\gamma^{-1}  x) \quad \text{for all } \gamma \in \Gamma, x \in X, \text{ and } \chi \in \widehat{X}.
\]
Therefore  there is also a   left $\Z[\Gamma]$-module structure on $\widehat{X}$.
Note that the canonical topological group isomorphism $\iota \colon X \to \widehat{\widehat{X}}$ is
$\Gamma$-equivariant and hence a left $\Z[\Gamma]$-module isomorphism.

\subsection{Algebraic dynamical systems}
An \emph{algebraic dynamical system}
is a pair $(X,\alpha)$, where $X$ is a compact metrizable Abelian topological group
and $\alpha$ is an action of a countable group $\Gamma$ on $X$ by continuous group automorphisms.
\par
As an example, if $A$ is a compact metrizable Abelian topological group (e.g.~$A = \T$) and
$\Gamma$ a countable group,
then the system $(A^\Gamma,\sigma)$, where
$A^\Gamma = \{x \colon \Gamma \to A\}$ is equipped with the product topology,  and $\sigma$ is the \emph{shift action}, defined by
\[
(\sigma(\gamma,x))(\gamma') \coloneqq x(\gamma^{-1} \gamma') \quad \text{for all  } \gamma, \gamma' \in \Gamma \text{ and }x \in A^\Gamma,
\]
is an algebraic dynamical system.
\par
Let $(X,\alpha)$ be an algebraic dynamical system with acting group $\Gamma$.
As $X$ is compact and metrizable, its Pontryagin dual $\widehat{X}$ is a discrete countable Abelian group.
We have seen at the end of the previous subsection that there is a left $\Z[\Gamma]$-module structure on $\widehat{X}$ induced by the action of $\Gamma$ on $X$.
Conversely, if $M$ is a countable left $\Z[\Gamma]$-module and we equip $M$ with its discrete topology, then its Pontryagin dual $\widehat{M}$ is a compact metrizable Abelian group
and there is, by duality, an action $\alpha_M$ of $\Gamma$ on $\widehat{M}$ by continuous group automorphisms,
so that
$(\widehat{M},\alpha_M)$ is an algebraic dynamical system.
In this way, algebraic dynamical systems with acting group $\Gamma$ are in one-to-one correspondence with countable left $\Z[\Gamma]$-modules.

\subsection{Finitely presented algebraic dynamical systems}
\label{ss:PAAds}
Let $\Gamma$ be a countable group. One says that an algebraic dynamical system $(X,\alpha)$  with acting group $\Gamma$ 
is \emph{finitely generated} provided its Pontryagin dual $\widehat{X}$, equipped with the dual action $\widehat{\alpha}$ of $\Gamma$, 
is a finitely generated left $\Z[\Gamma]$-module.

Recall (cf.\ \cite[Definition 4.25]{Lam}) that for a unital ring $R$, a left $R$-module ${\mathcal M}$ is said to be
\emph{finitely presented} provided there exist $k \in \N$ and some finitely generated left $R$-submodule $J \leq R^k$ such that
${\mathcal M} = R^k/J$.
\par
Let now $k, n \in \N$ and $A \in \M_{n,k}(\Z[\Gamma])$.
Then $\Z[\Gamma]^n A$ is a finitely generated left $\Z[\Gamma]$-submodule of $\Z[\Gamma]^k$.
We denote by $M_A \coloneqq  \Z[\Gamma]^k/ \Z[\Gamma]^n A$ the corresponding
finitely presented left $\Z[\Gamma]$-module.
\par
To simplify notation, let us write $X_A$ instead of $\widehat{M_A}$ and $\alpha_A$ instead
of $\alpha_{M_A}$.
The algebraic dynamical system $(X_A,\alpha_A)$ is called
the \emph{finitely presented algebraic dynamical system} associated with $A$.
\par
For example, if $k=n=1$ and $A = f \in \Z[\Gamma]$, then $M_f = \Z[\Gamma]/\Z[\Gamma]f$,
where $\Z[\Gamma]f$ is the principal left ideal of $\Z[\Gamma]$ generated by $f$, and
the algebraic dynamical system $(X_f,\alpha_f)$ is called
the \emph{principal algebraic dynamical system} associated with $f$.
\par
One  can regard $(X_A,\alpha_A)$ as a \emph{group subshift} of
$((\T^k)^\Gamma,\sigma)$, i.e.,
as a closed  subgroup of $(\T^k)^\Gamma$
that is invariant under the shift action $\sigma$ of $\Gamma$ on $(\T^k)^\Gamma$, in the following way.
The Pontryagin dual of $(\T^k)^\Gamma$ is $\Z[\Gamma]^k$ with pairing
$\langle \cdot,\cdot \rangle \colon \Z[\Gamma]^k \times (\T^k)^\Gamma \to \T$ given by
\[
\langle g, x \rangle = \sum_{j=1}^k\sum_{\eta \in \Gamma} g^j_{\eta} x_j(\eta)
\]
for all $g = (g^1,g^2, \ldots, g^k) \in \Z[\Gamma]^k$  and $x = (x_1,x_2, \ldots, x_k) \in (\T^\Gamma)^k = (\T^k)^\Gamma$.

Therefore, writing $A = (a^{ij})_{\substack{1 \leq i \leq n\\ 1 \leq j \leq k}}$ and $A^i = (a^{i1},a^{i2}, \ldots, a^{ik}) \in \Z[\Gamma]^k$ for $i=1,2, \ldots,n$, we have
\begin{align*}
X_A
& = \widehat{\Z[\Gamma]^k/\Z[\Gamma]^n A} \\
& = (\Z[\Gamma]^n A)^\perp \\
& = \{x \in (\T^k)^\Gamma : \langle g, x\rangle = 0 \text{ for all } g \in \Z[\Gamma]^n A\} \\
& =_{*} \{x \in (\T^k)^\Gamma : \langle \gamma A^i, x\rangle = 0 \text{ for all } \gamma \in \Gamma  \text{ and } i=1,2,\ldots,n\}\\
& = \{x \in (\T^k)^\Gamma : \sum_{j=1}^k\sum_{\eta \in \Gamma} a^{ij}_{\eta} x_j(\gamma \eta)  = 0 \text{ for all } \gamma \in \Gamma
\text{ and } i=1,2,\ldots,n\}\\
& = \{x \in (\T^k)^\Gamma : \sum_{j=1}^k\sum_{\eta \in \Gamma}  x_j(\gamma \eta)(a^{ij})^*_{\eta^{-1}}  = 0 \text{ for all }
\gamma \in \Gamma \text{ and } i=1,2,\ldots,n\},
\end{align*}
where $=_{*}$ follows by taking $g \coloneqq \gamma(0, \ldots, 0, {1_\Gamma},0, \ldots,0)A = \gamma A^i \in \Z[\Gamma]^k$.
In other words,
\begin{equation}
\label{e:X-A-subshift}
X_A= \{x \in (\T^k)^\Gamma : xA^* = 0_{(\T^n)^\Gamma}\},
\end{equation}
with the action $\alpha_A$ of $\Gamma$ on $X_A \subset (\T^k)^\Gamma$ obtained  by restricting to $X_A$ the shift action $\sigma$.
\par
In particular, if $A = f \in \Z[\Gamma]$, then \eqref{e:X-A-subshift} becomes
\begin{equation}
\label{e:X-f-subshift}
X_f= \{x \in \T^\Gamma : xf^*=0_{\T^\Gamma}\}.
\end{equation}
\par
Consider the surjective map $\pi \colon \ell^\infty(\Gamma)^k \to (\T^k)^\Gamma$
defined by $\pi(g)(\gamma)^i = g_\gamma^i \mod 1$ for all $g = (g^1, g^2, \ldots, g^k) \in \ell^\infty(\Gamma)^k$,
$\gamma \in \Gamma$, and $i=1,2, \ldots, k$.
Denote by $\ell^\infty(\Gamma,\Z)^k$ the set consisting of all $g \in \ell^\infty(\Gamma)^k$ such that $g_\gamma^i \in \Z$ for all $\gamma \in \Gamma$ and $i=1,2, \ldots, k$.

\begin{proposition}
\label{p:X-A-carac-lift}  Let $\Gamma$ be a countable group.
Let $x \in (\T^k)^\Gamma$ and $g \in \ell^\infty(\Gamma)^k$ such that $\pi(g) = x$.
With the above notation,  the following conditions are equivalent:
\begin{enumerate}[\rm (a)]
\item
$x \in X_A$;
\item
$g A^* \in \ell^\infty(\Gamma,\Z)^n$.
\end{enumerate}
\end{proposition}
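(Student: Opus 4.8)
The plan is to show that coordinatewise reduction modulo $\Z$ intertwines convolution by the integer matrix $A^*$ with the $A^*$-action on $(\T^k)^\Gamma$ appearing in \eqref{e:X-A-subshift}, and then to read the equivalence off directly. First I would note that, since every entry of $A^* \in \M_{k,n}(\Z[\Gamma])$ lies in $\R[\Gamma] \subset \ell^1(\Gamma)$, the product $gA^* \in \ell^\infty(\Gamma)^n$ is well defined (Subsection~\ref{ss:convolution}), and that, unwinding \eqref{e:def-A-convolution} for the matrix $A^*$, for each $i \in \{1,\dots,n\}$ and $\gamma \in \Gamma$ the coefficient
\[
(gA^*)^i_\gamma \;=\; \sum_{j=1}^{k}\sum_{\eta \in \Gamma} g^j_{\gamma\eta}\,(a^{ij})^*_{\eta^{-1}}
\]
is a \emph{finite} sum of real numbers, because each $a^{ij}$ has finite support in $\Gamma$. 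I would also record the elementary fact that $\pi \colon \ell^\infty(\Gamma)^k \to (\T^k)^\Gamma$ is a surjective group homomorphism with kernel exactly $\ell^\infty(\Gamma,\Z)^k$, and likewise for the analogous reduction map from $\ell^\infty(\Gamma)^n$ onto $(\T^n)^\Gamma$, which I also denote by $\pi$.

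The heart of the argument is the identity $\pi(gA^*) = xA^*$ in $(\T^n)^\Gamma$, where $xA^*$ denotes the element whose vanishing characterizes membership of $x$ in $X_A$, namely $(xA^*)^i_\gamma = \sum_{j=1}^k \sum_{\eta\in\Gamma} x_j(\gamma\eta)\,(a^{ij})^*_{\eta^{-1}}$ (this is the formula used to derive \eqref{e:X-A-subshift}). I would verify it coordinatewise: for $i \in \{1,\dots,n\}$ and $\gamma \in \Gamma$,
\begin{align*}
\pi(gA^*)^i_\gamma
&= (gA^*)^i_\gamma \bmod 1
= \Bigl(\sum_{j=1}^{k}\sum_{\eta \in \Gamma} g^j_{\gamma\eta}\,(a^{ij})^*_{\eta^{-1}}\Bigr) \bmod 1 \\
&= \sum_{j=1}^{k}\sum_{\eta \in \Gamma} \bigl(g^j_{\gamma\eta} \bmod 1\bigr)\,(a^{ij})^*_{\eta^{-1}}
= \sum_{j=1}^{k}\sum_{\eta \in \Gamma} x_j(\gamma\eta)\,(a^{ij})^*_{\eta^{-1}}
= (xA^*)^i_\gamma .
\end{align*}
The middle equality is the only step that is not purely formal: it holds because each $(a^{ij})^*_{\eta^{-1}}$ is an \emph{integer} and the sum is \emph{finite}, so that reduction modulo $1$ passes through this $\Z$-linear combination in $\T$. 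This is precisely the point at which integrality of $A$ (equivalently, of $A^*$) and finiteness of $\supp(a^{ij})$ are used; the remaining equalities only use $\pi(g) = x$, i.e.\ $x_j(\gamma\eta) = g^j_{\gamma\eta} \bmod 1$.

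Granting the identity, the proposition follows at once: by \eqref{e:X-A-subshift}, condition (a) says that $xA^* = 0$ in $(\T^n)^\Gamma$; by the identity this is equivalent to $\pi(gA^*) = 0$, that is, to $gA^* \in \ker \pi = \ell^\infty(\Gamma,\Z)^n$, which is condition (b). I do not anticipate a genuine obstacle: the entire content of the statement is the compatibility, established in the displayed computation, of the (integer, finitely supported) matrix $A^*$ with the short exact sequence $0 \to \ell^\infty(\Gamma,\Z)^k \to \ell^\infty(\Gamma)^k \to (\T^k)^\Gamma \to 0$ furnished by $\pi$.
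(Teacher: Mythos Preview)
Your proof is correct and follows exactly the same approach as the paper's: the paper's proof is the single line ``This follows immediately from~\eqref{e:X-A-subshift} and the equality $\pi(gA^*) = \pi(g)A^* = xA^*$,'' and you have simply unpacked this identity coordinatewise and explained why integrality and finite support of the entries of $A^*$ make it valid.
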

\begin{proof}
This follows immediately from \eqref{e:X-A-subshift} and the equality $\pi(gA^*) = \pi(g)A^* = xA^*$.
\end{proof}

\subsection{The homoclinic group}
\label{sec:homoclinic}
Let $(X,\alpha)$ be an algebraic dynamical system with acting group $\Gamma$.
The set of points in $X$ that are homoclinic to $0_X$ with respect to $\alpha$ is
a $\Z[\Gamma]$-submodule  $\Delta(X,\alpha) \subset X$,  which is called the \emph{homoclinic group} of $(X,\alpha)$  (cf.~\cite{lind-schmidt}, \cite{lind-schmidt-survey-heisenberg}).
Note that $x \in \Delta(X,\alpha)$ if and only if
$\lim_{\gamma \to \infty} \gamma x = 0_X$.
We can choose a compatible metric $d$ on $X$ that is translation-invariant so that
$$
d(\gamma x, \gamma y) = d(\gamma x - \gamma y, 0_X) = d(\gamma(x - y),0_X)
$$
for all $x,y \in X$ and $\gamma \in \Gamma$.
We deduce that $x$ and $y$ are homoclinic if and only if $x - y \in \Delta(X,\alpha)$.

\begin{lemma}
\label{l:char-homoclinic-shift}
Let $\Gamma$ be a countable group, $k \in \N$, and let $x \in (\T^k)^\Gamma$.
The following conditions are equivalent.
\begin{enumerate}[{\rm (a)}]
\item
$x \in \Delta((\T^k)^\Gamma,\sigma)$;
\item
$\lim_{\gamma \to \infty} x(\gamma) = 0_{\T^k}$;
\item
there exists $g \in \CC_0(\Gamma)^k$ such that $x = \pi(g)$.
\end{enumerate}
\end{lemma}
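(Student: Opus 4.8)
The plan is to prove the cycle of implications (a) $\Rightarrow$ (b) $\Rightarrow$ (c) $\Rightarrow$ (a), working throughout with the product topology on $(\T^k)^\Gamma$, the explicit formula $(\sigma(\gamma,x))(\gamma') = x(\gamma^{-1}\gamma')$ for the shift, and the quotient homomorphism $q \colon \R^k \to \T^k$, so that $\pi(g)(\gamma) = q(g_\gamma)$ where $g_\gamma \coloneqq (g^1_\gamma, \dots, g^k_\gamma) \in \R^k$. For (a) $\Rightarrow$ (b): if $x \in \Delta((\T^k)^\Gamma,\sigma)$ then $\sigma(\gamma,x) \to 0_{(\T^k)^\Gamma}$ as $\gamma \to \infty$, and since convergence in the product topology is coordinatewise, evaluating the coordinate $1_\Gamma$ gives $x(\gamma^{-1}) = (\sigma(\gamma,x))(1_\Gamma) \to 0_{\T^k}$; substituting $\eta = \gamma^{-1}$ (and noting $\gamma \to \infty$ iff $\gamma^{-1} \to \infty$) yields $\lim_{\eta \to \infty} x(\eta) = 0_{\T^k}$, which is (b).

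For (b) $\Rightarrow$ (c): equip $\T^k$ with the metric $d_{\T^k}(\bar u,\bar v) = \min_{m \in \Z^k}\|u - v - m\|_\infty$. Then $q$ restricts to a homeomorphism from the open cube $(-1/2,1/2)^k$ onto the open ball $U \coloneqq \{y \in \T^k : d_{\T^k}(y,0_{\T^k}) < 1/2\}$, and for the resulting continuous section $s \colon U \to (-1/2,1/2)^k$ of $q$ one has $\|s(y)\|_\infty = d_{\T^k}(y,0_{\T^k})$ for all $y \in U$. By (b), the set $F \coloneqq \{\gamma \in \Gamma : x(\gamma) \notin U\}$ is finite. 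Define $g = (g^1,\dots,g^k) \in \ell^\infty(\Gamma)^k$ by $g_\gamma \coloneqq s(x(\gamma))$ for $\gamma \notin F$ and by choosing an arbitrary $q$-preimage $g_\gamma$ of $x(\gamma)$ for $\gamma \in F$. Then $\pi(g) = x$ by construction, and for $\gamma \notin F$ we have $\|g_\gamma\|_\infty = d_{\T^k}(x(\gamma),0_{\T^k}) \to 0$ as $\gamma \to \infty$ by (b); hence $g \in \CC_0(\Gamma)^k$, which is (c).

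For (c) $\Rightarrow$ (a): if $x = \pi(g)$ with $g \in \CC_0(\Gamma)^k$, then for every fixed $\gamma' \in \Gamma$ we have $(\sigma(\gamma,x))(\gamma') = x(\gamma^{-1}\gamma') = q(g_{\gamma^{-1}\gamma'})$; since $\gamma^{-1}\gamma' \to \infty$ in $\Gamma$ as $\gamma \to \infty$ and each $g^i$ vanishes at infinity, $g_{\gamma^{-1}\gamma'} \to 0$ in $\R^k$, so by continuity of $q$ at $0$ we get $(\sigma(\gamma,x))(\gamma') \to 0_{\T^k}$. As this holds for every coordinate $\gamma'$, it follows that $\sigma(\gamma,x) \to 0_{(\T^k)^\Gamma}$, i.e., $x \in \Delta((\T^k)^\Gamma,\sigma)$.

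The argument is essentially a direct unwinding of the definitions of the product topology, the shift action, and the notion of vanishing at infinity; the only mildly delicate point is the existence of the continuous normalized section $s$ of $q$ near $0_{\T^k}$ used in the step (b) $\Rightarrow$ (c). One could alternatively bypass $s$ by lifting each $x(\gamma)$ to its unique representative in the fundamental domain $(-1/2,1/2]^k$, at the cost of a harmless boundary case when some coordinate equals $1/2$; this is why I prefer to phrase the section on the open ball $U$.
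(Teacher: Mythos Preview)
Your proof is correct and follows essentially the same approach as the paper. The only differences are organizational: you argue via the cycle (a) $\Rightarrow$ (b) $\Rightarrow$ (c) $\Rightarrow$ (a), whereas the paper proves the two equivalences (a) $\Leftrightarrow$ (b) and (b) $\Leftrightarrow$ (c) separately; and for (b) $\Rightarrow$ (c) the paper simply lifts each $x(\gamma)$ to the fundamental domain $[-1/2,1/2)^k$ in one stroke, rather than introducing the open ball $U$ and a finite exceptional set $F$ as you do.
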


\begin{proof}
Suppose (a). Then $\lim_{\gamma \to \infty} \gamma^{-1} x = 0_{(\T^k)^\Gamma}$.
As $x(\gamma) = \gamma^{-1}x(1_\Gamma)$, this implies
\[
\lim_{\gamma \to \infty} x(\gamma) = \lim_{\gamma \to \infty} \gamma^{-1}x(1_\Gamma) = 0_{\T^k}.
\]
This shows (a) $\implies$ (b).
\par
Conversely, suppose (b).
Let $W$ be a neighborhood of $0_{(\T^k)^\Gamma}$ in $(\T^k)^\Gamma$
and let us show that there exists a finite
subset $\Omega \subset \Gamma$ such that
\begin{equation}
\label{e:omega}
\gamma x \in W \mbox{ for all } \gamma \in \Gamma \setminus \Omega.
\end{equation}
By definition of the product topology, we can find a neighborhood $V$ of $0_{\T^k}$ in $\T^k$
and a finite subset $\Omega_1 \subset \Gamma$
such that
$W$ contains all $y \in (\T^k)^\Gamma$ that satisfy
\begin{equation*}
\label{e:W-V-Omega1}
y(\omega_1) \in V \mbox{ for all } \omega_1 \in \Omega_1.
\end{equation*}
On the other hand, since $\lim_{\gamma \to \infty} x(\gamma) = 0_{\T^k}$, we can find a finite subset
$\Omega_2 \subset \Gamma$
such that
\begin{equation}
\label{e:V}
x(\gamma) \in V \mbox{ for all } \gamma \in \Gamma  \setminus \Omega_2.
\end{equation}
Take  $\Omega \coloneqq \Omega_1 \Omega_2^{-1} \subset \Gamma$
and suppose that $\gamma \in \Gamma \setminus  \Omega$.
Then for every $\omega_1 \in \Omega_1$, we have that $\gamma^{-1} \omega_1 \in \Gamma \setminus \Omega_2$ and hence
\[
\gamma x(\omega_1) = x(\gamma^{-1} \omega_1)  \in V
\]
by~\eqref{e:V}.
This implies that  $\gamma x \in  W$.
Thus \eqref{e:omega} is satisfied.
This proves (b) $\implies$ (a).
\par
The fact that (c) implies (b) is an immediate consequence of the continuity of the quotient map
$\R^k \to \R^k/\Z^k = \T^k$.
Conversely, if we assume  (b), then the unique $g \in \ell^\infty(\Gamma)^k$ such that
$-1/2 \leq g_\gamma^j < 1/2$ and $x_j(\gamma) = g_\gamma^j \mod 1$ for all $\gamma \in \Gamma$ and $j=1,2,\ldots,k$,
clearly satisfies (c).
\end{proof}

\subsection{Connectedness of the phase space}
A non-zero element $f \in \Z[\Gamma]$ is called \emph{primitive} if there is no integer $n \geq 2$ that divides all coefficients of $f$. Every nonzero element $f \in \Z[\Gamma]$ can be uniquely written in the form
$f = m f_0$ with $m$ a positive integer and $f_0$ primitive. The integer $m$ is called the \emph{content} of $f$.
For principal algebraic dynamical systems with elementary amenable acting group we have the following criterion for connectedness
of the phase space.
Recall (cf.\ for instance \cite{Chou}) that the class of \emph{elementary amenable groups} is the smallest class of groups containing all finite groups and all Abelian groups that is closed under the operations of taking subgroups, quotiens, extensions, and direct limits.

\begin{proposition}
\label{p:Xf-connectedness}
Let $\Gamma$ be a countable torsion-free elementary amenable group $($e.g.\ $\Gamma = \Z^d)$.
Let $f \in \Z[\Gamma]$ with $f \not= 0$.
Then the following conditions are equivalent:
\begin{enumerate}[\rm (a)]
\item
$X_f$ is connected;
\item
$f$ is primitive.
\end{enumerate}
\end{proposition}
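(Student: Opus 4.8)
The plan is to pass through Pontryagin duality and reduce the statement to a zero-divisor property of the group ring $\Z[\Gamma]$ and of its reductions modulo primes. Since $X_f=\widehat{M_f}$ is compact, it is connected exactly when its dual $M_f=\Z[\Gamma]/\Z[\Gamma]f$ is torsion-free as an abelian group (recalled in Subsection~\ref{subsec:pontryagin}). An abelian group is torsion-free if and only if, for every prime $p$, its $p$-torsion subgroup $\{m:pm=0\}$ is trivial; and $f$ is primitive if and only if, for every prime $p$, not all coefficients of $f$ are divisible by $p$. So it suffices to prove, for a fixed prime $p$, that the $p$-torsion of $M_f$ vanishes if and only if $p\nmid f$ (meaning $p$ does not divide every coefficient of $f$).

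To compute the $p$-torsion I would use the short exact sequence of left $\Z[\Gamma]$-modules
\[
0 \longrightarrow \Z[\Gamma] \xrightarrow{\ r \mapsto rf\ } \Z[\Gamma] \longrightarrow M_f \longrightarrow 0,
\]
whose leftmost map is injective because $\Z[\Gamma]$ has no zero-divisors; this is where torsion-freeness of $\Gamma$ is needed, via the theorem of Kropholler, Linnell and Moody for torsion-free elementary amenable groups (for $\Gamma$ torsion-free abelian, the case relevant to the rest of the paper, it is elementary: write $\Gamma$ as a directed union of finitely generated, hence free abelian, subgroups, and use that Laurent polynomial rings over a domain are domains). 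Since $\Z[\Gamma]$ is free as a $\Z$-module, the displayed sequence is a length-one free resolution of $M_f$ over $\Z$; applying $-\otimes_\Z\F_p$ identifies the $p$-torsion subgroup of $M_f$, that is $\operatorname{Tor}_1^\Z(M_f,\F_p)$, with the kernel of the map $\F_p[\Gamma]\to\F_p[\Gamma]$, $r\mapsto r\bar f$, where $\bar f$ denotes the image of $f$ in $\F_p[\Gamma]=\Z[\Gamma]\otimes_\Z\F_p$. As $\F_p[\Gamma]$ is again a domain (same theorem, now over the field $\F_p$), this kernel is zero precisely when $\bar f\neq 0$, i.e.\ precisely when $p\nmid f$. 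Together with the reduction in the first paragraph, this yields the equivalence of (a) and (b).

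The only non-formal input is the absence of zero-divisors in $K[\Gamma]$ for $K\in\{\Z,\F_p\}$ and $\Gamma$ torsion-free elementary amenable; this is the crux, and the reason the hypotheses on $\Gamma$ are what they are. Everything else — the connectedness versus torsion-freeness dictionary, the decomposition of torsion into $p$-primary components, and the one-line $\operatorname{Tor}$ computation from the free resolution — is routine bookkeeping.
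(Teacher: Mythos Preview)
Your proof is correct and follows essentially the same route as the paper's: both reduce the torsion question to the domain property of $K[\Gamma]$ for $K\in\{\Q,\F_p\}$ via the Kropholler--Linnell--Moody theorem, checking prime by prime whether the image of $f$ in $\F_p[\Gamma]$ vanishes. The only difference is packaging: the paper does the $p$-torsion computation by hand (lift a torsion class $g+\Z[\Gamma]f$, reduce the relation $pg=hf$ mod $p$, cancel in the domain $\F_p[\Gamma]$), while you express the same step as $\operatorname{Tor}_1^{\Z}(M_f,\F_p)=\ker\bigl(\F_p[\Gamma]\xrightarrow{\cdot\bar f}\F_p[\Gamma]\bigr)$ from the two-term free $\Z$-resolution.
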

\begin{proof}
(a) $\Rightarrow$ (b):  Suppose that $f$ is not primitive. Then $f = m g$ for some integer $m \geq 2$ and $g \in \Z[\Gamma]$.
By \cite[Theorem 1.4]{KLM} the ring $\Q[\Gamma]$ is a domain. If $g=hf$ for some $h\in \Z[\Gamma]$, then $\frac{1}{m}f=hf$ in $\Q[\Gamma]$, and hence $h=\frac{1}{m} 1_\Gamma$, which is a contradiction. Thus $g+\Z[\Gamma] f$ is a nonzero element of
$\Z[\Gamma]/\Z[\Gamma] f$, while $m(g+\Z[\Gamma] f)=0$. Therefore $g+\Z[\Gamma] f$ is a nonzero torsion element of $M_f$, and hence $X_f$ is not connected.

(b) $\Rightarrow$ (a): Suppose that $f$ is primitive and that $X_f$ is not connected. Then $M_f$ has torsion, so that there exists
$a \in M_f$ with finite order $n\geq 2$. Replacing $a$ by some integral multiple of $a$, we may assume that $n$ is a prime number $p$.
Write $a=g+\Z[\Gamma] f$ for some $g\in \Z[\Gamma]$. Then $pg=hf$ for some $h\in \Z[\Gamma]$.
Denote by $\psi$ the natural ring morphism $\Z[\Gamma]\rightarrow (\Z/p\Z)[\Gamma]$ obtained by reducing coefficients modulo $p$.
Then $\psi(h)\psi(f)=0$. By \cite[Theorem~1.4]{KLM}, the ring $ (\Z/p\Z)[\Gamma]$ is a domain. Since $f$ is primitive, $\psi(f)\neq 0$.
Thus $\psi(h)=0$, i.e. $h=pw$ for some $w\in \Z[\Gamma]$. Then $pg=hf=pwf$, and hence $g=wf\in \Z[\Gamma]$.
This means that $a=0$, which is a contradiction.
\end{proof}

\section{Weak forms of expansivity for algebraic actions}
\label{s:weak-forms}
In this section we present and study weak forms of expansivity for algebraic actions.
This applies in particular to the harmonic models introduced in \cite{schmidt-verbitskiy} (see also \cite{Bowen-Li}).

\subsection{$p$-expansive algebraic actions and $p$-homoclinic groups}
\label{s:p-exp-and-p-homoclinic}
In this section we review the notions of $p$-expansive algebraic actions and of $p$-homoclinic groups
introduced by Chung and the third named author in \cite[Sections 4 and 5]{Chung-Li}.

Let $\Gamma$ be a countable group acting by automorphisms of a compact metrizable Abelian group $X$. Let also $1 \leq p \leq \infty$.
\par
For $x \in X$ and $\chi\in \widehat{X}$, define the function $\Psi'_{x, \chi}$ on  $\Gamma$ by setting
\begin{equation}
\label{e:PSI-prime}
\Psi'_{x, \chi}(\gamma)=e^{2\pi i\langle\gamma x, \chi\rangle}-1,
\end{equation}
for all  $\gamma\in \Gamma$.

One then says that the algebraic dynamical system $(X,\alpha)$  is \emph{$p$-expansive} provided there
exists a finite subset $W \subset \widehat{X}$ and $\varepsilon > 0$ such that $0_X$ is the only point $x \in X$ satisfying
\begin{equation}
\label{e:PSI-prime-epsilon}
\sum_{\chi \in W} \|\Psi'_{x, \chi}\|_p < \varepsilon.
\end{equation}

The following collects the main properties of $p$-expansivity.

\begin{theorem}[{\cite[Proposition 4.3 and Theorem 4.11]{Chung-Li}}]
\label{t:p-exp}
Let $(X,\Gamma,\alpha)$ be an algebraic dynamical system. Let $1 \leq p \leq \infty$. Then the following hold:
\begin{enumerate}[{\rm (1)}]
\item If $\alpha$ is $p$-expansive, then it is $q$-expansive for all $1 \leq q \leq p$.
\item If $\alpha$ is $p$-expansive, then $\widehat{X}$ is a finitely generated left $\Z[\Gamma]$-module.
\item If $\alpha$ is $p$-expansive, then for any finite subset $W \subset \widehat{X}$ generating
$\widehat{X}$ as a left $\Z[\Gamma]$-module, there exists $\varepsilon > 0$ such that $0_X$ is the only point $x \in X$ satisfying
\eqref{e:PSI-prime-epsilon}.
\item $\alpha$ is $\infty$-expansive if and only if it is expansive.
\item
\label{t:p-exp-last}
Let $k,n \in \N$ and $A \in \M_{n,k}(\Z[\Gamma])$. Then $\alpha_A$ is $p$-expansive if an only if the $\R[\Gamma]$-morphism
$\ell^p(\Gamma)^k \to \ell^p(\Gamma)^n$ sending $g$ to $gA^*$ is injective.
Moreover, if in addition, $\Gamma$ is amenable, then the following conditions are equivalent:
\begin{enumerate}[{\rm (a)}]
\item the topological entropy of $(X_A, \alpha_A)$ is finite;
\item $\alpha_A$ is $1$-expansive;
\item $\alpha_A$ is $2$-expansive;
\item the $\Z[\Gamma]$-morphism $\Z[\Gamma]^k \to \Z[\Gamma]^n$ sending $g$ to $gA^*$ is injective;
\item the $\R[\Gamma]$-morphism $\R[\Gamma]^k \to \R[\Gamma]^n$ sending $g$ to $gA^*$ is injective.
\end{enumerate}
\end{enumerate}
\end{theorem}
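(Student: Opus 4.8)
The plan is to establish the five assertions in order, since each one exploits structure exposed by the previous ones. The one elementary fact used throughout is that for $t\in\R$ the quantity $|e^{2\pi it}-1|=2|\sin\pi t|$ is comparable to $\dist(t,\Z)$: precisely $4\dist(t,\Z)\le|e^{2\pi it}-1|\le 2\pi\dist(t,\Z)$, and also $|e^{i\theta}-1|\le|\theta|$ for real $\theta$. For (1), if $q\le p$ then $\|h\|_p\le\|h\|_q$ for every function $h$ on $\Gamma$ (with $\|h\|_q=\infty$ when $h\notin\ell^q(\Gamma)$), so the very same finite set $W\subset\widehat X$ and constant $\varepsilon$ witnessing $p$-expansivity witness $q$-expansivity: $\sum_{\chi\in W}\|\Psi'_{x,\chi}\|_q<\varepsilon$ forces $\sum_{\chi\in W}\|\Psi'_{x,\chi}\|_p<\varepsilon$, hence $x=0_X$. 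For (2), let $W$ witness $p$-expansivity and let $N\le\widehat X$ be the $\Z[\Gamma]$-submodule it generates. If $x$ lies in the annihilator $N^\perp\subset X$, then $\langle\gamma x,\chi\rangle=\langle x,\gamma^{-1}\chi\rangle=0$ for all $\gamma\in\Gamma$ and $\chi\in W$ (since $\gamma^{-1}\chi\in N$), so $\Psi'_{x,\chi}\equiv0$ and $\sum_{\chi\in W}\|\Psi'_{x,\chi}\|_p=0<\varepsilon$, whence $x=0_X$. Thus $N^\perp=\{0_X\}$; as $\widehat X$ is discrete, $N$ is closed, and Pontryagin duality gives $N=(N^\perp)^\perp=\widehat X$, so $\widehat X$ is finitely generated over $\Z[\Gamma]$ by $W$.

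For (3), let $W$ witness $p$-expansivity with constant $\varepsilon$ and let $W'$ be any finite generating set; write each $\chi\in W$ as $\chi=\sum_{\chi'\in W'}r_{\chi,\chi'}\chi'$ with $r_{\chi,\chi'}\in\Z[\Gamma]$. From the definition of the dual action, $e^{2\pi i\langle\gamma x,r\chi'\rangle}=\prod_{\delta}\bigl(e^{2\pi i\langle\delta^{-1}\gamma x,\chi'\rangle}\bigr)^{r_\delta}$, and the elementary bound $|\prod_i w_i^{n_i}-1|\le\sum_i|n_i|\,|w_i-1|$ for unimodular $w_i$ yields the pointwise estimate $|\Psi'_{x,r\chi'}|\le|r|*|\Psi'_{x,\chi'}|$, so by Young's inequality $\|\Psi'_{x,r\chi'}\|_p\le\|r\|_1\|\Psi'_{x,\chi'}\|_p$. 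Summing, $\sum_{\chi\in W}\|\Psi'_{x,\chi}\|_p\le C\sum_{\chi'\in W'}\|\Psi'_{x,\chi'}\|_p$ with $C=\sum_{\chi,\chi'}\|r_{\chi,\chi'}\|_1$, so $\varepsilon':=\varepsilon/C$ works for $W'$. Assertion (4) is then similar but more direct: the sets $U(W,\delta)=\{x:\dist(\langle x,\chi\rangle,\Z)<\delta\ \forall\chi\in W\}$ ($W$ finite, $\delta>0$) form a neighborhood basis at $0_X$ because $\widehat X$ is discrete, and by the comparison $|e^{2\pi it}-1|\asymp\dist(t,\Z)$, smallness of $\sum_{\chi\in W}\|\Psi'_{x,\chi}\|_\infty$ is equivalent to $\gamma x\in U(W,\delta)$ for all $\gamma$; translating between a metric expansivity constant and such a neighborhood is routine, giving $\infty$-expansive $\Leftrightarrow$ expansive.

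For (5), using (3) we may test $p$-expansivity of $\alpha_A$ against the canonical generating set $\{\bar e_1,\dots,\bar e_k\}$ of $M_A=\widehat{X_A}$. For $x=(x_1,\dots,x_k)\in X_A\subset(\T^k)^\Gamma$ one computes $\langle\gamma x,\bar e_j\rangle=x_j(\gamma^{-1})$, so $\|\Psi'_{x,\bar e_j}\|_p=\|e^{2\pi ix_j}-1\|_p$, the norm of the coordinatewise transform; hence $\alpha_A$ is $p$-expansive iff there is $\varepsilon>0$ with $\sum_j\|e^{2\pi ix_j}-1\|_p<\varepsilon\Rightarrow x=0$ for $x\in X_A$. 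If $g\in\ell^p(\Gamma)^k$ is nonzero with $gA^*=0$, then for small $t>0$ we have $\pi(tg)\in X_A$ by Proposition \ref{p:X-A-carac-lift} (as $(tg)A^*=0$), $\pi(tg)\ne0$, and $\sum_j\|e^{2\pi itg_j}-1\|_p\le2\pi t\|g\|_p\to0$, contradicting $p$-expansivity; so $p$-expansivity forces injectivity of $g\mapsto gA^*$ on $\ell^p(\Gamma)^k$. Conversely, if that map is injective but there were $x^{(m)}\in X_A\setminus\{0\}$ with $\sum_j\|e^{2\pi ix^{(m)}_j}-1\|_p\to0$, the principal lift $g^{(m)}$ with coordinates in $[-1/2,1/2)$ satisfies $\|g^{(m)}\|_p\le\tfrac14\sum_j\|e^{2\pi ix^{(m)}_j}-1\|_p\to0$ and $g^{(m)}\ne0$; since $g^{(m)}A^*$ is $\Z$-valued (Proposition \ref{p:X-A-carac-lift}) with $\|g^{(m)}A^*\|_p\le\|A\|_1\|g^{(m)}\|_p\to0$, it vanishes for large $m$, forcing $g^{(m)}=0$, a contradiction. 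Given this, (b)$\Leftrightarrow[\ell^1$ injectivity$]$ and (c)$\Leftrightarrow[\ell^2$ injectivity$]$, and (d)$\Leftrightarrow$(e) is a clearing-of-denominators argument for the finite integer linear system $gA^*=0$.

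The main obstacle is the amenable addendum, namely the equivalence of (a) (finiteness of topological entropy) with (b)--(e). Deducing finite entropy from injectivity of $g\mapsto gA^*$ on $\Z[\Gamma]^k$, and conversely showing that failure of that injectivity produces a submodule of $M_A$ on which $\alpha_A$ has infinite entropy, genuinely requires the theory of $\ell^1$-convolution operators and Fuglede--Kadison determinants over the group von Neumann algebra of the amenable group $\Gamma$, together with the Yuzvinskii addition formula for entropy of algebraic actions. I would cite \cite{Chung-Li} for this part rather than reprove it; I do not expect an elementary shortcut, as the passage from the ring-theoretic / $\ell^p$-analytic conditions to the entropy condition is precisely where the heavier machinery enters.
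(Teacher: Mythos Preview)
The paper does not prove this theorem: it is quoted from \cite[Proposition~4.3 and Theorem~4.11]{Chung-Li} with no argument supplied, so there is nothing to compare against directly. Your sketch for parts (1)--(4) and the first half of (5) is correct and is essentially the standard argument found in \cite{Chung-Li}; in particular the convolution estimate $\|\Psi'_{x,r\chi'}\|_p\le\|r\|_1\|\Psi'_{x,\chi'}\|_p$ is exactly the tool used there (compare the paper's own inequality~\eqref{E-norm}).

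One imprecision in your handling of the amenable addendum: from nesting and clearing denominators you get (c)$\Rightarrow$(b)$\Rightarrow$(e)$\Leftrightarrow$(d), and you then isolate (a)$\Leftrightarrow$(d) as the step needing Fuglede--Kadison determinants and the entropy addition formula. But closing the cycle also requires (d)$\Rightarrow$(c), i.e.\ that injectivity of $g\mapsto gA^*$ on $\Z[\Gamma]^k$ forces injectivity on $\ell^2(\Gamma)^k$. This implication is itself non-elementary and uses amenability (through the group von~Neumann algebra, not merely through entropy); it is not a consequence of the ``easy'' steps you list. So when you defer to \cite{Chung-Li}, be explicit that you are citing for the full equivalence of (a)--(e), not only for the link between entropy and $\Z[\Gamma]$-injectivity.
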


We now recall  the definitions of a $p$-homoclinic point and of the $p$-homoclinic group (cf.\ \cite[Section 5]{Chung-Li}).
Let $\Gamma$ be a countable group acting by automorphisms of the compact metrizable Abelian group $X$ and let $1 \leq p < \infty$.
One says that a point $x \in X$ is \emph{$p$-homoclinic} provided that $\Psi'_{x,\chi} \in \ell^p(\Gamma)$ for all $\chi
\in \widehat{X}$. Let then $\Delta^p(X,\alpha)$ denote the set of all $p$-homoclinic points of $X$. This is called
the \emph{$p$-homoclinic group} (cf.\ Theorem \ref{t:p-homo-group}.(2)) of the algebraic dynamical system $(X,\alpha)$.
Also one sets $\Delta^\infty(X,\alpha) \coloneqq \Delta(X,\alpha)$.
Note that for $p=1$, the set $\Delta^1(X,\alpha)$ was studied in \cite{schmidt-verbitskiy} and \cite{lind-schmidt-verbitskiy}.
Here below we collect some basic properties of the $p$-homoclinic groups.

\begin{theorem}[{\cite[Proposition 5.2, Lemma 5.3, and Lemma 5.4]{Chung-Li}}]
\label{t:p-homo-group}
Let $\Gamma$ be a countable group acting by automorphisms of the compact metrizable Abelian group $X$ and let $1 \leq p \leq \infty$.
Then the following hold:
\begin{enumerate}[{\rm (1)}]
\item One has $\Delta^p(X,\alpha) \subset \Delta^q(X,\alpha)$ for all $p \leq q \leq \infty$.
\item $\Delta^p(X,\alpha)$ is a $\Gamma$-invariant subgroup of $X$.
\item If $\alpha$ is $p$-expansive, then $\Delta^p(X,\alpha)$ is countable.
\item If $\Z[\Gamma]$ is left Noetherian and $\alpha$ is $p$-expansive, then $\Delta^p(X,\alpha)$ is a finitely generated
left $\Z[\Gamma]$-module.
\item
\label{item:sub}
Assume that $p < \infty$ and let $k,n \in \N$ and $A \in \M_{n,k}(\Z[\Gamma])$.
If $\alpha_A$ is $p$-expansive, then $\Delta^p(X_A,\alpha_A)$ is isomorphic to a $\Z[\Gamma]$-submodule of
$\Z[\Gamma]^n/\Z[\Gamma]^kA^*$. If, in addition, the  $\R[\Gamma]$-morphism
$\ell^p(\Gamma)^k \to \ell^p(\Gamma)^n$ sending $g$ to $gA^*$ is invertible,
then $\Delta^p(X_A,\alpha_A)$ is isomorphic to $\Z[\Gamma]^n/\Z[\Gamma]^kA^*$.
\end{enumerate}
\end{theorem}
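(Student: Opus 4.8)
\noindent
The plan is to treat the five assertions in turn, establishing (1) and (2) by elementary manipulation of the functions $\Psi'_{x,\chi}$ and (3)--(5) by combining $p$-expansivity with the injectivity criterion of Theorem~\ref{t:p-exp}. The computational backbone is the identity, valid for $x,y\in X$, $\chi\in\widehat X$ and $\gamma\in\Gamma$ with $a=\langle\gamma x,\chi\rangle$, $b=\langle\gamma y,\chi\rangle$,
\[
\Psi'_{x+y,\chi}(\gamma)=\Psi'_{x,\chi}(\gamma)\,\Psi'_{y,\chi}(\gamma)+\Psi'_{x,\chi}(\gamma)+\Psi'_{y,\chi}(\gamma),
\]
which follows from $e^{2\pi i(a+b)}-1=(e^{2\pi ia}-1)(e^{2\pi ib}-1)+(e^{2\pi ia}-1)+(e^{2\pi ib}-1)$, together with $|\Psi'_{-x,\chi}(\gamma)|=|\Psi'_{x,\chi}(\gamma)|$ and the right-translation identity $\Psi'_{\delta x,\chi}(\gamma)=\Psi'_{x,\chi}(\gamma\delta)$ for $\delta\in\Gamma$. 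Since each $\Psi'_{x,\chi}$ is bounded by $2$, the pointwise product of an $\ell^p$-function with a bounded one is again in $\ell^p(\Gamma)$, and $\ell^p(\Gamma)$ is invariant under $\gamma\mapsto\gamma^{-1}$ and under right translation; these observations give (2), namely that $\Delta^p(X,\alpha)$ is a $\Gamma$-invariant subgroup of $X$. For (1), the inclusions $\ell^p(\Gamma)\subset\ell^q(\Gamma)\subset\CC_0(\Gamma)$ from \eqref{e:inclusion} yield $\Delta^p\subset\Delta^q$ for $p\le q<\infty$, while for $x\in\Delta^p$ with $p<\infty$ the membership $\Psi'_{x,\chi}\in\CC_0(\Gamma)$ forces $\langle\gamma x,\chi\rangle\to 0$ in $\T$ for every $\chi\in\widehat X$, hence $\gamma x\to 0_X$ and $x\in\Delta(X,\alpha)=\Delta^\infty$.

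\noindent
For (3), Theorem~\ref{t:p-exp} lets me fix a finite subset $W\subset\widehat X$ generating $\widehat X$ as a left $\Z[\Gamma]$-module together with $\varepsilon>0$ such that $0_X$ is the unique point satisfying $\sum_{\chi\in W}\|\Psi'_{x,\chi}\|_p<\varepsilon$. Put $E=\ell^p(\Gamma)$ if $p<\infty$ and $E=\CC_0(\Gamma)$ if $p=\infty$, and consider $\Phi\colon\Delta^p\to\bigoplus_{\chi\in W}E$, $x\mapsto(\Psi'_{x,\chi})_{\chi\in W}$. This map is injective, because $\Psi'_{x,\chi}$ determines $\langle\gamma x,\chi\rangle=\langle x,\gamma^{-1}\chi\rangle$ for all $\gamma\in\Gamma$, $\chi\in W$, hence determines the homomorphism $\psi\mapsto\langle x,\psi\rangle$ on the generating set $\Gamma W$ and therefore $x$ itself. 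Since $e^{2\pi i\langle\gamma x,\chi\rangle}$ and $e^{2\pi i\langle\gamma y,\chi\rangle}$ differ by a unimodular factor, one has $|\Psi'_{x,\chi}(\gamma)-\Psi'_{y,\chi}(\gamma)|=|\Psi'_{x-y,\chi}(\gamma)|$, so $\sum_{\chi\in W}\|\Psi'_{x,\chi}-\Psi'_{y,\chi}\|_p=\sum_{\chi\in W}\|\Psi'_{x-y,\chi}\|_p\ge\varepsilon$ whenever $x\ne y$, using (2). Thus $\Phi(\Delta^p)$ is $\varepsilon$-separated in the separable metric space $\bigoplus_{\chi\in W}E$ ($E$ is separable because $\Gamma$ is countable, which is precisely why one passes to $\CC_0(\Gamma)$ rather than the non-separable $\ell^\infty(\Gamma)$ when $p=\infty$), so $\Delta^p$ is countable.

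\noindent
The heart of the matter is (5). Using $X_A=\{x\in(\T^k)^\Gamma: xA^*=0\}$ from \eqref{e:X-A-subshift}, I would first prove that $x\in X_A$ is $p$-homoclinic if and only if it admits a lift $g\in\ell^p(\Gamma)^k$ with $\pi(g)=x$: testing $\Psi'_{x,\chi}\in\ell^p(\Gamma)$ on the coordinate-evaluation characters (which generate $\widehat{X_A}$, so the subgroup/$\Gamma$-invariance argument of (2) reduces a general $\chi$ to these) is equivalent, via the comparability $|e^{2\pi it}-1|\asymp\dist(t,\Z)$, to $(\dist(x_j(\gamma),\Z))_\gamma\in\ell^p(\Gamma)$, i.e.\ to the canonical lift lying in $\ell^p(\Gamma)^k$. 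For such a lift, Proposition~\ref{p:X-A-carac-lift} gives $gA^*\in\ell^\infty(\Gamma,\Z)^n\cap\ell^p(\Gamma)^n$, and an integer-valued $\ell^p$-function with $p<\infty$ is finitely supported, so $gA^*\in\Z[\Gamma]^n$. Two $\ell^p$-lifts of $x$ differ by an integer-valued element of $\ell^p(\Gamma)^k$, hence by an element of $\Z[\Gamma]^k$, so $\Theta(x):=gA^*+\Z[\Gamma]^kA^*$ is a well-defined $\Z[\Gamma]$-module homomorphism $\Delta^p(X_A,\alpha_A)\to\Z[\Gamma]^n/\Z[\Gamma]^kA^*$; its injectivity is exactly the injectivity of $g\mapsto gA^*$ on $\ell^p(\Gamma)^k$ furnished by $p$-expansivity (Theorem~\ref{t:p-exp}), which proves the embedding. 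If moreover $g\mapsto gA^*$ is invertible on $\ell^p(\Gamma)^k\to\ell^p(\Gamma)^n$, then for each $b\in\Z[\Gamma]^n$ one solves $gA^*=b$ in $\ell^p(\Gamma)^k$ and checks that $\pi(g)\in\Delta^p(X_A)$ with $\Theta(\pi(g))=b+\Z[\Gamma]^kA^*$, giving surjectivity and hence the asserted isomorphism.

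\noindent
Assertion (4) then follows: if $\alpha$ is $p$-expansive, Theorem~\ref{t:p-exp} makes $\widehat X$ finitely generated, hence (over the left Noetherian ring $\Z[\Gamma]$) finitely presented, so $X\cong X_A$ for some $A\in\M_{n,k}(\Z[\Gamma])$; the embedding of (5) for $p<\infty$ (and, for $p=\infty$, the same construction using the $\CC_0(\Gamma)^k$-lifts supplied by Lemma~\ref{l:char-homoclinic-shift} together with injectivity of $g\mapsto gA^*$ on $\ell^\infty(\Gamma)^k$) realizes $\Delta^p$ as a submodule of the finitely generated module $\Z[\Gamma]^n/\Z[\Gamma]^kA^*$, and submodules of finitely generated modules over a Noetherian ring are finitely generated. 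I expect the principal obstacle to lie in the lift step of (5): proving that $p$-homoclinicity of $x$ is equivalent to the existence of an $\ell^p$-lift and that $gA^*$ is thereby forced into $\Z[\Gamma]^n$, since this is where the analytic input (the comparison $\Psi'_{x,\chi}\in\ell^p\Leftrightarrow(\dist(x_j(\gamma),\Z))_\gamma\in\ell^p$ on a generating set, and the $p$-expansivity injectivity criterion) is concentrated.
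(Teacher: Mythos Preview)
The paper does not give its own proof of this theorem: it is quoted verbatim from \cite[Proposition~5.2, Lemma~5.3, Lemma~5.4]{Chung-Li} and stated without argument, so there is no in-paper proof to compare against. Your outline is correct and is essentially the argument one finds in Chung--Li; in particular, your treatment of part~(5) via $\ell^p$-lifts and the map $x\mapsto gA^*+\Z[\Gamma]^kA^*$ is exactly the mechanism the present paper reproduces in its own Proposition~\ref{p:he-implies} for the $\CC_0$/homoclinically expansive case, so your proof of (5) and the paper's proof of that proposition are parallel. The separation argument you give for (3) (embedding $\Delta^p$ as an $\varepsilon$-separated set in a separable Banach space via $x\mapsto(\Psi'_{x,\chi})_{\chi\in W}$, using the identity $|\Psi'_{x,\chi}-\Psi'_{y,\chi}|=|\Psi'_{x-y,\chi}|$) is clean and correct, as is your reduction of (4) to (5) through finite presentability over a Noetherian ring. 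No gaps.
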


\subsection{Homoclinically expansive actions}
In this section we introduce and study a new form of weak expansivity for dynamical systems.
\begin{definition}
\label{def:h-e}
Let $(X,\alpha)$ be a dynamical system with acting group $\Gamma$.
One says that the action is \emph{homoclinically expansive} if there exists a constant $\varepsilon_0 > 0$ such
that, for each pair of distinct homoclinic points $x,y \in X$, there exists an element $\gamma \in \Gamma$ such that
$d(\gamma x, \gamma y) > \varepsilon_0$, where $d$ is any compatible metric on $X$.
Such a constant $\varepsilon_0$ is then called a \emph{homoclinic-expansivity constant} for $(X,\alpha,d)$.
\end{definition}

Note that the fact that $(X, \alpha)$ is homoclinically expansive is in fact independent of the choice of the metric $d$
by compactness of $X$. A pseudometric $\tilde{d}$ on $X$ is said to be \emph{dynamically-generating}
if for all distinct $x, y \in X$ there is $\gamma \in \Gamma$ such that $\tilde{d}(\gamma x, \gamma y) > 0$ (cf.\ \cite[Definition 9.35]{kerr-li}). Now, given a dynamically-generating continuous pseudometric $\tilde{d}$ on $X$, we can define a compatible metric $d$ on $X$ by
setting
$$d(x, y)=\sum_{n=0}^\infty\frac{1}{2^n}\tilde{d}(\gamma_nx, \gamma_ny),$$
where $\gamma_0=1_\Gamma, \gamma_1, \ldots$ is an enumeration of the elements of $\Gamma$. Then
$$\sup_{\gamma\in \Gamma}\tilde{d}(\gamma x, \gamma y) \le \sup_{\gamma \in \Gamma}d(\gamma x, \gamma y)\le 2\sup_{\gamma\in \Gamma}
\tilde{d}(\gamma x, \gamma y)$$
for all $x, y\in X$. Thus in Definition \ref{def:h-e} we may take $d$ to be any dynamically-generating continuous pseudometric on $X$.

In the following, we study homoclinic expansivity for algebraic actions.
Let $(X,\alpha)$ be an algebraic dynamical system with acting group $\Gamma$.

For any $t\in \R$, we set $|t+\Z| \coloneqq \min_{m\in \Z} |t+m|$.
More generally, for $k \in \N$ and $t = (t_1, t_2, \ldots, t_k)\in \R^k$ we set
\begin{equation}
\label{e:dist-k}
|t+\Z^k| \coloneqq \max_{1 \leq j \leq k} |t_j+\Z|.
\end{equation}

Given $x\in X$ and $\chi\in \widehat{X}$, define a function $\Psi_{x, \chi}$ on $\Gamma$ by setting
\begin{equation}
\label{e:PSI}
\Psi_{x, \chi}(\gamma)=|\langle\gamma x, \chi\rangle|,
\end{equation}
for all  $\gamma\in \Gamma$.
\par
As a comparison between \eqref{e:PSI-prime} and \eqref{e:PSI},
note that there is some constant $C>0$ such that
\[
C|t|\le |e^{2\pi i t}-1|\le C^{-1}|t|
\]
for all $t\in [-1/2, 1/2]$. It follows that, for all $1\le p\le \infty$,
\[
C\|\Psi_{x, \chi}\|_p \le \|\Psi'_{x, \chi}\|_p \le C^{-1}\|\Psi_{x, \chi}\|_p.
\]

It is easy to see that for any $x\in X$, one has that $x\in \Delta(X, \alpha)$ if and only if $\Psi_{x, \chi}\in \CC_0(\Gamma)$ for all
$\chi\in \widehat{X}$.

\begin{proposition}
\label{P-h expansive}
Let $(X,\alpha)$ be an algebraic dynamical system with acting group $\Gamma$. Then the following conditions are equivalent:
\begin{enumerate}[{\rm (a)}]
\item the action $\alpha$ is homoclinically expansive;
\item there exist a finite subset $W$ of $\widehat{X}$ and $\varepsilon>0$ such that $0_X$ is the only point $x$ in $\Delta(X, \alpha)$ satisfying
\begin{equation}
\label{e:W-hE}
\sum_{\chi \in W}\|\Psi_{x, \chi}\|_\infty<\varepsilon.
\end{equation}
\end{enumerate}
\end{proposition}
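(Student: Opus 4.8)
The plan is to work throughout with one explicit compatible metric built from the (countably many) characters of $X$, chosen with geometrically decaying weights, and to exploit the uniform bound $|\langle\cdot,\cdot\rangle|\le 1/2$ on the pairing. Concretely, fix an enumeration $\widehat{X}=\{\eta_0,\eta_1,\dots\}$ and set
\[
\rho(x,y)\coloneqq \sum_{n\ge 0}2^{-n}\,|\langle x-y,\eta_n\rangle| .
\]
Each summand is a continuous function of $(x,y)$ bounded by $1/2$, so the series converges uniformly and $\rho$ is a continuous, translation-invariant metric on $X$ that separates points (because the $\eta_n$ do); being a continuous metric on the compact space $X$, it is compatible with the topology. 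Hence, by Definition~\ref{def:h-e}, homoclinic expansivity of $(X,\alpha)$ may be tested with $d=\rho$, and by translation invariance it means: there is $\varepsilon_0>0$ with $\sup_{\gamma\in\Gamma}\rho(\gamma z,0_X)>\varepsilon_0$ for every nonzero $z\in\Delta(X,\alpha)$.

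For the implication (a) $\Rightarrow$ (b): assume such an $\varepsilon_0$ exists, pick $N$ with $2^{-N-1}\le\varepsilon_0/2$, and put $W\coloneqq\{\eta_0,\dots,\eta_N\}$ and $\varepsilon\coloneqq\varepsilon_0/2$. Given a nonzero $z\in\Delta(X,\alpha)$, choose $\gamma_0\in\Gamma$ with $\rho(\gamma_0 z,0_X)>\varepsilon_0$. The tail of the defining series of $\rho(\gamma_0 z,0_X)$ is at most $\tfrac12\sum_{n>N}2^{-n}=2^{-N-1}\le\varepsilon_0/2$, so its partial sum over $n\le N$ exceeds $\varepsilon_0/2$, and therefore
\[
\sum_{\chi\in W}\|\Psi_{z,\chi}\|_\infty=\sum_{n\le N}\sup_{\gamma}|\langle\gamma z,\eta_n\rangle|\ \ge\ \sum_{n\le N}|\langle\gamma_0 z,\eta_n\rangle|\ \ge\ \sum_{n\le N}2^{-n}|\langle\gamma_0 z,\eta_n\rangle|\ >\ \varepsilon_0/2=\varepsilon .
\]
Since this sum vanishes at $z=0_X$, we conclude that $0_X$ is the only point of $\Delta(X,\alpha)$ satisfying \eqref{e:W-hE}, which is (b).

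For the converse (b) $\Rightarrow$ (a): let $W\subset\widehat{X}$ be a finite set (we may assume $W\neq\emptyset$) and $\varepsilon>0$ be as in (b). Read contrapositively, $\sum_{\chi\in W}\|\Psi_{z,\chi}\|_\infty\ge\varepsilon$ for every nonzero $z\in\Delta(X,\alpha)$, so by the pigeonhole principle, for each such $z$ there are $\chi\in W$ and $\gamma\in\Gamma$ with $|\langle\gamma z,\chi\rangle|>\varepsilon/(2|W|)$. Now each map $x\mapsto|\langle x,\chi\rangle|$ ($\chi\in W$) is continuous on $X$ and vanishes at $0_X$, so there is $\delta>0$ such that $\rho(x,0_X)<\delta$ forces $|\langle x,\chi\rangle|\le\varepsilon/(2|W|)$ for every $\chi\in W$; hence the element $\gamma$ found above satisfies $\rho(\gamma z,0_X)\ge\delta$. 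Thus $\sup_{\gamma}\rho(\gamma z,0_X)\ge\delta$ for every nonzero $z\in\Delta(X,\alpha)$, and by translation invariance $\varepsilon_0\coloneqq\delta/2$ is a homoclinic-expansivity constant for $(X,\alpha,\rho)$, which is (a).

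The argument is elementary; the only step requiring foresight is the choice of the weighted character metric $\rho$. Its geometric weights, combined with the uniform bound $|\langle\cdot,\cdot\rangle|\le 1/2$, are exactly what make the truncation to finitely many characters in (a) $\Rightarrow$ (b) lose at most $\varepsilon_0/2$, so that a finite set $W$ suffices without any compactness argument over $X$ (or over the typically non-closed homoclinic group $\Delta(X,\alpha)$); everything else is the pigeonhole step and the continuity/uniform-convergence verifications.
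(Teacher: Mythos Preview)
Your proof is correct and follows essentially the same approach as the paper's: both build a translation-invariant compatible metric $\rho(x,y)=\sum_{\chi}h(\chi)\,|\langle x-y,\chi\rangle|$ from a positive $\ell^1$ weight on $\widehat{X}$ (you take $h(\eta_n)=2^{-n}$), then use a tail estimate to pass to a finite $W$ in (a)$\Rightarrow$(b) and a comparison between $\rho$ and the finitely many $|\langle\,\cdot\,,\chi\rangle|$, $\chi\in W$, in (b)$\Rightarrow$(a). The only cosmetic difference is that in (b)$\Rightarrow$(a) the paper writes down the expansivity constant explicitly in terms of the weights, whereas you invoke continuity of the character maps at $0_X$ to obtain $\delta$.
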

\begin{proof}
Take a function $h\in \ell^1(\widehat{X})$ such that $h(\chi)>0$ for every $\chi\in \widehat{X}$.
Consider the compatible metric $\rho$ on $X$ defined by
\[
\rho(x, y)=\sum_{\chi\in \widehat{X}} h(\chi)|\langle x, \chi\rangle - \langle y, \chi\rangle|=\sum_{\chi\in \widehat{X}} h(\chi)|\langle x-y, \chi\rangle|.
\]

Assume that (b) holds with $W$ and $\varepsilon$ and let us set $\varepsilon_0 \coloneqq \varepsilon \min_{\chi\in W}h(\chi)/|W|$.
Let $(x, y)$ be a homoclinic pair of $X$ with $\sup_{\gamma \in \Gamma}\rho(\gamma x, \gamma y)<\varepsilon_0$. Note that
\begin{align*}
\sup_{\gamma \in \Gamma}\rho(\gamma x, \gamma y)&=\sup_{\gamma \in \Gamma}\sum_{\chi\in \widehat{X}} h(\chi)|\langle\gamma x-\gamma y, \chi\rangle|\\
&=\sup_{\gamma \in \Gamma}\sum_{\chi\in \widehat{X}} h(\chi)\Psi_{x-y, \chi}(\gamma)\\
&\ge \sup_{\gamma\in \Gamma} (\min_{\chi\in W}h(\chi))\sum_{\chi\in W}\Psi_{x-y, \chi}(\gamma)\\
&\ge \frac{\min_{\chi\in W}h(\chi)}{|W|}\sum_{\chi \in W}\|\Psi_{x-y, \chi}\|_\infty.
\end{align*}
Thus $\sum_{\chi \in W}\|\Psi_{x-y, \chi}\|_\infty<\varepsilon$, and hence $x=y$. This shows that $\varepsilon_0$ is a
homoclinic-expansivity constant for $(X, \alpha)$, and (b)$\Rightarrow$(a) follows.

Now assume that (a) holds and let $\varepsilon_0 > 0$ be a homoclinic-expansivity constant
for $(X, \alpha)$. Take a finite subset $W$ of $\widehat{X}$ such that $\sum_{\chi\in \widehat{X}\setminus W}h(\chi)<\varepsilon_0/2$
and set $\varepsilon \coloneqq \varepsilon_0/(2\max_{\chi\in W}h(\chi))$.
Let $x\in \Delta(X,\alpha)$ with $\sum_{\chi \in W}\|\Psi_{x, \chi}\|_\infty<\varepsilon$.
Then
\begin{align*}
\sup_{\gamma\in \Gamma}\rho(\gamma x, \gamma 0_X)&=\sup_{\gamma \in \Gamma}\sum_{\chi\in \widehat{X}}h(\chi)\Psi_{x, \chi}(\gamma)\\
&\le \varepsilon_0/2+\sup_{\gamma \in \Gamma}\sum_{\chi\in W}h(\chi)\Psi_{x, \chi}(\gamma)\\
&\le \varepsilon_0/2+(\max_{\chi\in W}h(\chi))\sum_{\chi\in W}\|\Psi_{x, \chi}\|_\infty\\
& < \varepsilon_0,
\end{align*}
and hence $x=0_X$. This shows that (a)$\Rightarrow$(b).
\end{proof}

\begin{proposition}
\label{P-h expansive basic}
Let $(X,\Gamma,\alpha)$ be an algebraic dynamical system. Suppose that $\alpha$ is homoclinically expansive and $\widehat{X}$ is finitely generated (as a left $\Z[\Gamma]$-module).
Then the following hold:
\begin{enumerate}[{\rm (1)}]
\item $\alpha$ is $p$-expansive for all $1\le p<\infty$;
\item for any finite subset $W$ of $\widehat{X}$ generating $\widehat{X}$ as a left $\Z[\Gamma]$-module, there exists $\varepsilon>0$ such that $0_X$ is the only point $x$ in $\Delta(X, \alpha)$ satisfying $\sum_{\chi \in W}\|\Psi_{x, \chi}\|_\infty<\varepsilon$.
\end{enumerate}
\end{proposition}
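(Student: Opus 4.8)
The plan is to derive both statements from Proposition~\ref{P-h expansive} via a single elementary change-of-generators estimate for the functions $\Psi_{x,\chi}$. Fix $x \in X$, and suppose $\chi \in \widehat{X}$ is written as a finite sum $\chi = \sum_{i} r_i \chi_i$ with $r_i \in \Z[\Gamma]$ and $\chi_i \in \widehat{X}$. Expanding $r_i \chi_i = \sum_{\delta \in \Gamma}(r_i)_\delta\, \delta\chi_i$ and using the identity $\langle \gamma x, \delta\chi_i\rangle = \langle \delta^{-1}\gamma x, \chi_i\rangle$ in $\T$ together with the subadditivity of $t \mapsto |t+\Z|$, one gets, for every $\gamma \in \Gamma$,
\[
\Psi_{x,\chi}(\gamma) \le \sum_i \sum_{\delta\in\Gamma}|(r_i)_\delta|\,\Psi_{x,\chi_i}(\delta^{-1}\gamma).
\]
Since $\gamma \mapsto \Psi_{x,\chi_i}(\delta^{-1}\gamma)$ is a translate of $\Psi_{x,\chi_i}$, the triangle inequality in $\ell^p(\Gamma)$ yields, for every $1 \le p \le \infty$,
\[
\|\Psi_{x,\chi}\|_p \le \sum_i \|r_i\|_1\,\|\Psi_{x,\chi_i}\|_p.
\]
In particular, if $\Psi_{x,\chi_i}\in\ell^p(\Gamma)$ for all $i$ and the $\chi_i$ generate $\widehat{X}$ as a $\Z[\Gamma]$-module, then $\Psi_{x,\chi}\in\ell^p(\Gamma)\subset\CC_0(\Gamma)$ for every $\chi\in\widehat{X}$; by the characterization of $\Delta(X,\alpha)$ recalled just before Proposition~\ref{P-h expansive}, this forces $x\in\Delta(X,\alpha)$.

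For part~(2), Proposition~\ref{P-h expansive} provides a finite set $W_0\subset\widehat{X}$ and $\varepsilon_0>0$ such that $0_X$ is the only $x\in\Delta(X,\alpha)$ with $\sum_{\chi\in W_0}\|\Psi_{x,\chi}\|_\infty<\varepsilon_0$. Given an arbitrary finite generating set $W$, I would write each $\chi\in W_0$ as a $\Z[\Gamma]$-combination of elements of $W$ and apply the displayed estimate with $p=\infty$: this produces a constant $K>0$, depending only on $W$, $W_0$ and the chosen combinations, with $\sum_{\chi\in W_0}\|\Psi_{x,\chi}\|_\infty \le K\sum_{\chi\in W}\|\Psi_{x,\chi}\|_\infty$ for all $x\in X$. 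Then $\varepsilon\coloneqq\varepsilon_0/K$ works.

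For part~(1), use that $\widehat{X}$ is finitely generated to fix a finite generating set $W$, and let $\varepsilon>0$ be the constant given by~(2) for this $W$. If $x\in X$ satisfies $\sum_{\chi\in W}\|\Psi_{x,\chi}\|_p<\varepsilon$ for some $1\le p<\infty$, then each $\Psi_{x,\chi}$ ($\chi\in W$) lies in $\ell^p(\Gamma)$, so by the last remark of the first paragraph $x\in\Delta(X,\alpha)$; and since $\|\cdot\|_\infty\le\|\cdot\|_p$ on $\ell^p(\Gamma)$, we have $\sum_{\chi\in W}\|\Psi_{x,\chi}\|_\infty<\varepsilon$, whence~(2) forces $x=0_X$. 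Passing through the comparison $C\|\Psi_{x,\chi}\|_p\le\|\Psi'_{x,\chi}\|_p\le C^{-1}\|\Psi_{x,\chi}\|_p$ recorded in the text, it follows that $0_X$ is the only $x\in X$ with $\sum_{\chi\in W}\|\Psi'_{x,\chi}\|_p$ below a suitable positive constant, i.e.\ $\alpha$ is $p$-expansive.

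The only genuinely delicate point is the one isolated in the first paragraph: homoclinic expansivity constrains pairs of \emph{homoclinic} points only, so before invoking Proposition~\ref{P-h expansive} one must already know that the point $x$ at hand is homoclinic. This is exactly the role of the finite generation hypothesis — it supplies a finite generating set $W$ to feed into~(2), and it lets the mere finiteness of the $\ell^p$-norms of the finitely many $\Psi_{x,\chi}$, $\chi\in W$, propagate through the change-of-generators estimate to all of $\widehat{X}$, pinning $x$ to $\Delta(X,\alpha)$. Everything else is routine bookkeeping with convolution and $\ell^1$–$\ell^p$ bounds.
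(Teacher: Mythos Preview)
Your proof is correct and follows essentially the same approach as the paper: both hinge on the change-of-generators estimate $\|\Psi_{x,\chi}\|_q \le \sum_i \|r_i\|_1 \|\Psi_{x,\chi_i}\|_q$ (the paper's inequality~\eqref{E-norm}), combined with $\|\cdot\|_\infty \le \|\cdot\|_p$ and the observation that finiteness of the $\ell^p$-norms on a generating set forces $x \in \Delta(X,\alpha)$. The only difference is organizational---you prove~(2) first and invoke it in~(1), whereas the paper proves~(1) directly by enlarging the $W$ from Proposition~\ref{P-h expansive} to a generating set---but the content is the same.
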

\begin{proof} Note that for any $x\in X$, $\chi, \chi'\in \widehat{X}$, and $u,v\in \Z[\Gamma]$ we have
\begin{align} \label{E-norm}
\|\Psi_{x, u\chi+v\chi'}\|_q\le \|u\|_1\|\Psi_{x, \chi}\|_q+\|v\|_1\|\Psi_{x, \chi'}\|_q
\end{align}
for all $1\le q\le \infty$.

(1). Since $\alpha$ is homoclinically expansive, by virtue of Proposition~\ref{P-h expansive} we can find a finite subset $W$ of
$\widehat{X}$ and $\varepsilon>0$ satisfying \eqref{e:W-hE}. Enlarging $W$ if necessary, we may assume that $W$ generates $\widehat{X}$
as a left $\Z[\Gamma]$-module.
Let $1\le p<\infty$. If $x\in X$ and $\sum_{\chi\in W}\|\Psi_{x, \chi}\|_p<\varepsilon$, then from \eqref{E-norm} we know that
$x\in \Delta^p(X, \alpha)\subset \Delta(X,\alpha)$, and using $\sum_{\chi\in W}\|\Psi_{x, \chi}\|_\infty\le \sum_{\chi\in W}\|\Psi_{x, \chi}\|_p<\varepsilon$, we conclude that $x=0_X$. Therefore $\alpha$ is $p$-expansive.

(2) follows from Proposition~\ref{P-h expansive} and \eqref{E-norm}.
\end{proof}

For finitely presented (e.g.\ principal) algebraic actions we have the following characterization of homoclinic expansivity
(compare with Theorem \ref{t:p-exp}.(\ref{t:p-exp-last})).

\begin{theorem}
\label{t:char-homo-exp}
Let $\Gamma$ be a countable group. Let $k, n\in \N$ and $A\in \M_{n,k}(\Z[\Gamma])$.
Then the following conditions are equivalent:
\begin{enumerate}[{\rm (a)}]
\item $(X_A,\alpha_A)$ is homoclinically expansive;
\item the linear map $\CC_0(\Gamma)^k\rightarrow \CC_0(\Gamma)^n$ sending $g$ to $gA^*$ is injective.
\end{enumerate}
\end{theorem}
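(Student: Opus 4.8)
The plan is to work with the concrete realization $X_A = \{x \in (\T^k)^\Gamma : xA^* = 0_{(\T^n)^\Gamma}\}$ of $(X_A,\alpha_A)$ as a closed $\sigma$-invariant subgroup of the full shift $((\T^k)^\Gamma,\sigma)$ provided by \eqref{e:X-A-subshift}, with $\alpha_A = \sigma|_{X_A}$, and to translate homoclinic expansivity into a statement about lifts to $\CC_0(\Gamma)^k$. The first step is the reduction $\Delta(X_A,\alpha_A) = X_A \cap \Delta((\T^k)^\Gamma,\sigma)$: since $X_A$ carries the subspace topology and $0_{X_A} = 0_{(\T^k)^\Gamma}$, a point of $X_A$ is homoclinic to $0$ for $\alpha_A$ precisely when it is homoclinic to $0$ for $\sigma$. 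Combining this with Lemma~\ref{l:char-homoclinic-shift} and Proposition~\ref{p:X-A-carac-lift}, the homoclinic group of $(X_A,\alpha_A)$ equals $\{\pi(g) : g \in \CC_0(\Gamma)^k,\ gA^* \in \ell^\infty(\Gamma,\Z)^n\}$; moreover, whenever $g \in \CC_0(\Gamma)^k$ one has $gA^* \in \CC_0(\Gamma)^n$ (convolution of a function vanishing at infinity with the finitely supported $A^*$), and a $\Z$-valued element of $\CC_0(\Gamma)^n$ has finite support, so the defining condition may be rewritten as $gA^* \in \Z[\Gamma]^n$.

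The second step is to choose a metric making Definition~\ref{def:h-e} transparent. On $(\T^k)^\Gamma$ the pseudometric $\tilde d(x,y) \coloneqq |x(1_\Gamma) - y(1_\Gamma) + \Z^k|$, with $|\cdot + \Z^k|$ as in \eqref{e:dist-k}, is continuous and dynamically-generating, so by the remark following Definition~\ref{def:h-e} it may be used to test homoclinic expansivity. Since $\sup_{\gamma\in\Gamma}\tilde d(\gamma x,\gamma y) = \sup_{\gamma\in\Gamma}|(x-y)(\gamma) + \Z^k|$, condition (a) is equivalent to: there exists $\varepsilon_0 \in (0,1/2)$ such that $0_X$ is the only $z \in \Delta(X_A,\alpha_A)$ with $\sup_{\gamma}|z(\gamma) + \Z^k| \le \varepsilon_0$. (Alternatively one could invoke Proposition~\ref{P-h expansive} with $W$ the image in $\widehat{X_A} = M_A$ of the standard basis of $\Z[\Gamma]^k$, but the pseudometric formulation is the most direct.)

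With these preparations the two implications are short. For (b)$\Rightarrow$(a): fix $\varepsilon_0 \in (0,1/2)$ with $\varepsilon_0(1 + \|A\|_1) < 1$; given $z \in \Delta(X_A,\alpha_A)$ with $\sup_\gamma|z(\gamma)+\Z^k| \le \varepsilon_0$, take the lift $g \in \ell^\infty(\Gamma)^k$ whose entries lie in $[-1/2,1/2)$, so that $g \in \CC_0(\Gamma)^k$, $\|g\|_\infty \le \varepsilon_0$, and $\pi(g) = z \in X_A$; then $gA^* \in \Z[\Gamma]^n$ is $\Z$-valued with $\|gA^*\|_\infty \le \|A\|_1\|g\|_\infty < 1$, hence $gA^* = 0$, and injectivity of $g \mapsto gA^*$ forces $g = 0$, i.e.\ $z = 0_X$. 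For (a)$\Rightarrow$(b): let $\varepsilon_0 \in (0,1/2)$ be a homoclinic-expansivity constant and $g \in \CC_0(\Gamma)^k$ with $gA^* = 0$; assuming $g \neq 0$, put $t \coloneqq \varepsilon_0/(2\|g\|_\infty)$, so that $(tg)A^* = 0 \in \Z[\Gamma]^n$ forces $\pi(tg) \in \Delta(X_A,\alpha_A)$ while $\sup_\gamma|\pi(tg)(\gamma)+\Z^k| \le t\|g\|_\infty < \varepsilon_0$ forces $\pi(tg) = 0_X$; then $tg$ is a $\Z$-valued element of $\CC_0(\Gamma)^k$ of $\|\cdot\|_\infty$-norm $<1$, whence $tg = 0$, contradicting $g \neq 0$.

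The argument involves no genuine obstacle, only three points requiring care: checking that homoclinicity inside the subshift $X_A$ coincides with homoclinicity in the ambient shift, so that Lemma~\ref{l:char-homoclinic-shift} and Proposition~\ref{p:X-A-carac-lift} apply; noting that integrality together with vanishing at infinity forces $gA^*$ to have finite support, so that ``$\pi(g) \in X_A$'' literally reads $gA^* \in \Z[\Gamma]^n$; and the elementary estimate $\|gA^*\|_\infty \le \|A\|_1\|g\|_\infty$, which is what makes a sufficiently small lift of a homoclinic point be annihilated exactly by $A^*$. In particular the equivalence uses neither amenability of $\Gamma$ nor finite generation of $\widehat{X_A}$.
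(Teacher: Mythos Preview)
Your proof is correct and follows essentially the same strategy as the paper's: in both directions the key is the scaling trick (for (a)$\Rightarrow$(b)) and the ``small canonical lift'' argument together with the estimate $\|gA^*\|_\infty \le \|A\|_1\|g\|_\infty$ (for (b)$\Rightarrow$(a)). The only noteworthy difference is that you fix the explicit dynamically-generating pseudometric $\tilde d(x,y)=|x(1_\Gamma)-y(1_\Gamma)+\Z^k|$ at the outset and argue directly with it, whereas the paper, for the direction (a)$\Rightarrow$(b), routes through Proposition~\ref{P-h expansive} and Proposition~\ref{P-h expansive basic}(2) applied to the canonical generating set $W\subset\widehat{X_A}$; your choice makes the argument slightly more self-contained, but the substance is identical.
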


\begin{proof}
For $x \in X_A$ consider the function $\Phi_x \in \ell^\infty(\Gamma)$
defined by $\Phi_x(\gamma) \coloneqq |x_\gamma|$ for all $\gamma \in \Gamma$, where $| \cdot |$ is as in
\eqref{e:dist-k}.

Assume that (b) fails. Then $gA^* = 0$ for some nonzero $g\in \CC_0(\Gamma)^k$.
As a consequence, for all $\lambda\in \R$ one also has $\lambda gA^* = 0$ and hence $\pi(\lambda g) \in \Delta(X_A, \alpha_A)$,
by Proposition \ref{p:X-A-carac-lift} and Lemma \ref{l:char-homoclinic-shift}.
Let $W \subset \widehat{X_A}$ denote the image of the canonical basis of $\Z[\Gamma]^k$ under the quotient map
$\Z[\Gamma]^k \to \Z[\Gamma]^k/\Z[\Gamma]^nA$. Note that $W$ then generates $\widehat{X_A}$ as a left $\Z[\Gamma]$-module.
When $\lambda \to 0$, one has $\|\Phi_{\pi(\lambda g)}\|_\infty \to 0$, and hence $\sum_{\chi \in W}\|\Psi_{\pi(\lambda g), \chi}\|_\infty \to 0$. Since $g \neq 0$, when $|\lambda|$ is sufficiently small and nonzero, $\pi(\lambda g) \neq 0_{X_A}$.
From Proposition \ref{P-h expansive basic}.(2), we deduce that $(X_A,\alpha_A)$ is not homoclinically expansive.
This shows (a)$\Rightarrow$(b).
\par
Now assume that (b) holds so that, in particular, $A\neq 0$. Let $d$ be a translation-invariant compatible metric on $X_A$.
Then there is some $\varepsilon_0 > 0$ such that for any $x \in X_A$ with  $d(x,0_{X_A})\le \varepsilon_0$, one has $|x_{1_\Gamma}|<1/(2\|A\|_1)$.
Let $x, y \in X_A$ be two homoclinic points with $\max_{\gamma \in \Gamma} d(\gamma x, \gamma y)\le
\varepsilon_0$. Then $x-y \in \Delta(X_A, \alpha_A)$, and for any $\gamma \in \Gamma$ we have $d(\gamma (x-y), 0_{X_A})\le \varepsilon_0$, and hence $|(x-y)_{\gamma^{-1}}|=|(\gamma(x-y))_{1_\Gamma}|<1/(2\|A\|_1)$.
Let $g$ be the unique element of  $\ell^\infty(\Gamma)^k$ satisfying $\|g\|_\infty\le1/(2\|A\|_1)$ and $\pi(g)=x-y$.
Since $x-y$ is in $\Delta(X_A, \alpha_A)$, we have $g\in \CC_0(\Gamma)^k$.
It follows that $\|gA^*\|_\infty\le \|g\|_\infty\|A\|_1\le 1/2$ and, by Proposition \ref{p:X-A-carac-lift},
$gA^*\in \ell^\infty(\Gamma, \Z)$. Therefore, $gA^*=0$. By (b) we have $g= 0$, and hence $x=y$.  This shows that
$\varepsilon_0$ is a homoclinic expansivity constant for $(X_A, \alpha_A)$, and (b)$\Rightarrow$(a) follows as well.
\end{proof}

Note that when $n=k=1$ and $A = f \in \Z[\Gamma]$,
condition (b) in Theorem \ref{t:char-homo-exp} (and therefore homoclinic expansivity of $\alpha_f$) is equivalent
to (we-1) in Definition \ref{def:we}.

\begin{proposition}
\label{p:he-implies}
Let $\Gamma$ be a countable group. Let $k, n\in \N$ and $A\in \M_{n,k}(\Z[\Gamma])$ and suppose that
$(X_A,\alpha_A)$ is homoclinically expansive. Then
$\Delta(X_A, \alpha_A)$ is isomorphic to a left $\Z[\Gamma]$-submodule of $\Z[\Gamma]^n/\Z[\Gamma]^k A^*$.
\end{proposition}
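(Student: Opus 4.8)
The plan is to exhibit an explicit injective left $\Z[\Gamma]$-module homomorphism
\[
\Phi \colon \Delta(X_A,\alpha_A) \longrightarrow \Z[\Gamma]^n/\Z[\Gamma]^k A^* ,
\]
and then to read off the statement as ``$\Delta(X_A,\alpha_A)$ is isomorphic to the submodule $\Phi(\Delta(X_A,\alpha_A))$''. The key mechanism is the lift description of homoclinic points. Since $X_A$ is a shift-invariant subgroup of $(\T^k)^\Gamma$, one has $\Delta(X_A,\alpha_A)\subseteq\Delta((\T^k)^\Gamma,\sigma)$, so by Lemma~\ref{l:char-homoclinic-shift} every $x\in\Delta(X_A,\alpha_A)$ can be written $x=\pi(g)$ with $g\in\CC_0(\Gamma)^k$. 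First I would check that $gA^*\in\Z[\Gamma]^n$: because $A^*$ has finite support, $gA^*\in\CC_0(\Gamma)^n$, and because $x\in X_A$, Proposition~\ref{p:X-A-carac-lift} gives $gA^*\in\ell^\infty(\Gamma,\Z)^n$; since a $\Z$-valued function vanishing at infinity has finite support, $\CC_0(\Gamma)^n\cap\ell^\infty(\Gamma,\Z)^n=\Z[\Gamma]^n$, whence $gA^*\in\Z[\Gamma]^n$. I then set $\Phi(x)\coloneqq gA^*+\Z[\Gamma]^k A^*$.

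Next I would verify that $\Phi$ is well defined and $\Z[\Gamma]$-linear. If $\pi(g)=\pi(g')=x$ with $g,g'\in\CC_0(\Gamma)^k$, then $g-g'\in\CC_0(\Gamma)^k\cap\ell^\infty(\Gamma,\Z)^k=\Z[\Gamma]^k$, so $gA^*-g'A^*=(g-g')A^*\in\Z[\Gamma]^k A^*$, showing the coset $\Phi(x)$ does not depend on the chosen lift. Additivity of $\Phi$ is immediate (sums of lifts are lifts). For equivariance I would use that $\pi$ is $\Gamma$-equivariant, so $\gamma g$ is a lift of $\gamma x$, together with the entrywise associativity $(\gamma g)A^*=\gamma(gA^*)$, obtained by applying Lemma~\ref{l:associative} to each component; since $\Z[\Gamma]^k A^*$ is a \emph{left} $\Z[\Gamma]$-submodule of $\Z[\Gamma]^n$, this yields $\Phi(\gamma x)=\gamma\,\Phi(x)$, and $\Z[\Gamma]$-linearity follows.

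The one place the hypothesis enters is injectivity. Suppose $\Phi(x)=0$; then $gA^*=hA^*$ for some $h\in\Z[\Gamma]^k$, i.e.\ $(g-h)A^*=0$ with $g-h\in\CC_0(\Gamma)^k$. By Theorem~\ref{t:char-homo-exp}, homoclinic expansivity of $(X_A,\alpha_A)$ is precisely the injectivity of the map $\CC_0(\Gamma)^k\to\CC_0(\Gamma)^n$, $g'\mapsto g'A^*$, so $g=h\in\Z[\Gamma]^k$ and hence $x=\pi(g)=0_{X_A}$. Thus $\Phi$ is an injective $\Z[\Gamma]$-module homomorphism and identifies $\Delta(X_A,\alpha_A)$ with the left $\Z[\Gamma]$-submodule $\Phi(\Delta(X_A,\alpha_A))$ of $\Z[\Gamma]^n/\Z[\Gamma]^k A^*$. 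I do not expect a genuine obstacle here: the whole argument is bookkeeping with lifts, and the homoclinic-expansivity hypothesis is used exactly once, only via Theorem~\ref{t:char-homo-exp}. The points needing some care are the identity $\CC_0(\Gamma)\cap\ell^\infty(\Gamma,\Z)=\Z[\Gamma]$ (used both to land in $\Z[\Gamma]^n$ and to make $\Phi$ well defined) and keeping the left/right conventions straight so that $\Z[\Gamma]^k A^*$ is a left submodule; specializing to $k=n=1$, $A=f$, this recovers the embedding $\Delta(X_f,\alpha_f)\hookrightarrow\Z[\Gamma]/\Z[\Gamma]f^*$, later promoted to an isomorphism under the full weak-expansivity assumption.
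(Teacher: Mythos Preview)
Your proof is correct and is essentially identical to the paper's own argument: the paper also lifts $x\in\Delta(X_A,\alpha_A)$ to $\tilde{x}\in\CC_0(\Gamma)^k$ via Lemma~\ref{l:char-homoclinic-shift}, uses Proposition~\ref{p:X-A-carac-lift} together with $\CC_0(\Gamma)\cap\ell^\infty(\Gamma,\Z)=\Z[\Gamma]$ to define $\varphi(x)=\tilde{x}A^*+\Z[\Gamma]^kA^*$, checks well-definedness the same way, and proves injectivity via Theorem~\ref{t:char-homo-exp}. If anything, you are slightly more explicit about $\Gamma$-equivariance (invoking Lemma~\ref{l:associative}), where the paper simply asserts that $\varphi$ is a left $\Z[\Gamma]$-module morphism.
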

\begin{proof}
For each $x\in \Delta(X_A, \alpha_A)$, take $\tilde{x}\in \CC_0(\Gamma)^k$ with $\pi(\tilde{x})=x$
(cf.\ Lemma \ref{l:char-homoclinic-shift}). Then, by virtue of Proposition \ref{p:X-A-carac-lift},
$\tilde{x}A^*\in \ell^\infty(\Gamma, \Z)^n\cap \CC_0(\Gamma)^n=\Z[\Gamma]^n$.
If we choose another $\tilde{x}'\in \CC_0(\Gamma)^k$ with $\pi(\tilde{x}')=x$, then $\tilde{x}-\tilde{x}'\in \ell^\infty(\Gamma, \Z)^k\cap \CC_0(\Gamma)^k=\Z[\Gamma]^k$, and hence $\tilde{x} A^*-\tilde{x}'A^*\in \Z[\Gamma]^k A^*$.
Thus the map $\varphi \colon \Delta(X_A, \alpha_A)\rightarrow \Z[\Gamma]^n/\Z[\Gamma]^k A^*$ sending $x$ to $\tilde{x} A^*+\Z[\Gamma]^k A^*$ is well defined. Clearly, $\varphi$ is a left $\Z[\Gamma]$-module morphism. Let $x\in \ker(\varphi)$. Then $\tilde{x} A^*=g A^*$,
for some $g\in \Z[\Gamma]^k$. By virtue of Theorem \ref{t:char-homo-exp}, we have $\tilde{x}=g$ and hence $x=0_{X_A}$.
Thus $\varphi$ is injective.
\end{proof}

\begin{examples}
\label{e:examples-HE}
Here below we describe some examples of principal algebraic actions and discuss their homoclinic expansivivity.
\begin{enumerate}[{\rm (1)}]
\item Suppose that $\gamma \in \Gamma$ has infinite order and denote by $\Gamma' \cong \Z$ the subgroup of $\Gamma$ it generates.
It follows from \cite[Theorem 5.1]{linnel:analytic} that for any nonzero $f \in \C[\Gamma']$ and any nonzero $g \in \CC_0(\Gamma')$, one
has $fg \neq 0$. Using the right-coset decomposition of $\Gamma$, it follows that for any nonzero $f \in \Z[\Gamma']$ and any nonzero
$g \in \CC_0(\Gamma)$, one has $fg \neq 0$. This shows that the associated principal algebraic action $\alpha_f$ is homoclinically expansive.
Note that if $f \coloneqq 1_\Gamma - \gamma \in \Z[\Gamma']$, then $f$ is not invertible in $\ell^1(\Gamma)$ and hence,
by \cite[Theorem 3.2]{deninger-schmidt} (cf.\ Theorem \ref{t:pads-expansive} below), $\alpha_f$ is not expansive.
(cf.\ \cite[Example 4.6]{Chung-Li}.)

\item Let $\Gamma = \Z^d$ and let $f \in \R[\Gamma]$. Note that $f$ may be regarded, by duality, as a function
on $\widehat{\Gamma} = \T^d$. We denote by $Z(f) \subset \T^d$ its zero set. It follows from
\cite[Theorem 2.2]{linnel-puls} that $Z(f)$ is contained in the image of the intersection of $[0,1]^d$ and a finite union of hyperplanes in
$\R^d$ under the natural quotient map $P \colon \R^d \to \T^d$ if and only if $fg \neq 0$ for all nonzero $g \in \CC_0(\Gamma)$.
From Theorem \ref{t:char-homo-exp} we thus deduce that the principal algebraic action associated with $f \in \Z[\Gamma]$ is homoclinically expansive if and only if $Z(f)$ is contained in the image of the intersection of $[0,1]^d$ and a finite union of hyperplanes in $\R^d$ under $P$. This is the case, for instance, if $Z(f)$ is finite. (cf.\ \cite[Example 4.9]{Chung-Li}.)

\item Suppose that $\Gamma$ contains two elements $\gamma, \gamma' \in \Gamma$ that generate a non-Abelian free subsemigroup.
Consider the polynomial
$f \coloneqq \pm3 \cdot 1_\Gamma - (1_\Gamma +\gamma-\gamma^2)\gamma' \in \Z[\Gamma]$. It follows from an argument similar to that in
\cite[Example 7.2]{lind-schmidt} that the associated principal algebraic action is homoclinically expansive
(though not expansive by \cite[Example A.1]{Li:annals} and \cite[Theorem 3.2]{deninger-schmidt}). (cf.\ \cite[Example 4.10]{Chung-Li}.)

\item In \cite[Example 4.7]{Chung-Li} it is shown that for $\Gamma = \Z^d$, $d \geq 2$, the element $h = 2d-1 - \sum_{j=1}^d (u_j+u_j^{-1}) \in \Z[u_1,u_1^{-1}, \cdots, u_d, u_d^{-1}] = \Z[\Z^d]$ satisfies that, for any $\frac{2d}{d-1} < p \leq +\infty$, the corresponding principal
algebraic action $\alpha_h$ is not $p$-expansive. It follows from Proposition \ref{P-h expansive basic}.(1) that $\alpha_h$ is not
homoclinically expansive either.
\end{enumerate}
\end{examples}

\subsection{Principal algebraic actions associated with weakly expansive polynomials}
\begin{theorem}
\label{t:we-implies}
Let $\Gamma$ be a countable group and suppose that $f \in \Z[\Gamma]$ is weakly expansive.
Then the following hold:
\begin{enumerate}[{\rm (1)}]
\item The element $\omega\in \CC_0(\Gamma)$ satisfying {\rm (we-2)} in Definition \ref{def:we} is unique and, moreover, $\omega f = 1_\Gamma$.

\item $\Delta(X_f, \alpha_f)$ is dense in $X_f$.

\item $\Delta(X_f, \alpha_f)$ is isomorphic to $\Z[\Gamma]/\Z[\Gamma]f^*$ as a left $\Z[\Gamma]$-module.
\end{enumerate}
\end{theorem}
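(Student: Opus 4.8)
The plan is to prove the three items in the order (1), (3), (2), using as a common tool the ``fundamental homoclinic point'' $x_\Delta \coloneqq \pi(\omega^*) \in \T^\Gamma$. For (1), uniqueness of $\omega$ is immediate from (we-1), since two solutions $\omega,\omega'$ of (we-2) satisfy $f(\omega-\omega')=0$. For the identity $\omega f = 1_\Gamma$, I would note that $\omega f - 1_\Gamma \in \CC_0(\Gamma)$ (as $\omega \in \CC_0(\Gamma)$ and $f \in \R[\Gamma] \subset \ell^1(\Gamma)$) and that, by Lemma \ref{l:associative}, $f(\omega f - 1_\Gamma) = (f\omega)f - f = f - f = 0$, so (we-1) applies. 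Then I would record the auxiliary facts about $x_\Delta$: applying the involution to $f\omega = 1_\Gamma$ and to $\omega f = 1_\Gamma$ and using \eqref{e:invoution-product} gives $\omega^* f^* = f^* \omega^* = 1_\Gamma$, whence $x_\Delta f^* = \pi(\omega^* f^*) = \pi(1_\Gamma) = 0_{\T^\Gamma}$, so $x_\Delta \in X_f$ by \eqref{e:X-f-subshift}, and $x_\Delta \in \Delta(X_f,\alpha_f)$ since $\omega^* \in \CC_0(\Gamma)$ (Lemma \ref{l:char-homoclinic-shift}). More generally, for $r \in \Z[\Gamma]$ one has $r\omega^* \in \CC_0(\Gamma)$ with $(r\omega^*)f^* = r(\omega^* f^*) = r \in \Z[\Gamma]$ by Lemma \ref{l:associative}, so $\pi(r\omega^*) = r\,x_\Delta \in \Delta(X_f,\alpha_f)$ by Proposition \ref{p:X-A-carac-lift} and Lemma \ref{l:char-homoclinic-shift}; in particular $\Z[\Gamma]\,x_\Delta \subset \Delta(X_f,\alpha_f)$.

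For (3): since $f$ is weakly expansive, (we-1) holds, so $(X_f,\alpha_f)$ is homoclinically expansive by the remark following Theorem \ref{t:char-homo-exp}. Proposition \ref{p:he-implies} (with $n=k=1$ and $A=f$) then produces an injective left $\Z[\Gamma]$-module morphism $\varphi \colon \Delta(X_f,\alpha_f) \to \Z[\Gamma]/\Z[\Gamma]f^*$ sending $\pi(g)$ (with $g \in \CC_0(\Gamma)$ and $gf^* \in \Z[\Gamma]$) to $gf^* + \Z[\Gamma]f^*$. Surjectivity then follows from the computation above: $\varphi(\pi(r\omega^*)) = (r\omega^*)f^* + \Z[\Gamma]f^* = r + \Z[\Gamma]f^*$ for every $r \in \Z[\Gamma]$. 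Hence $\varphi$ is an isomorphism, and as a byproduct $\Delta(X_f,\alpha_f) = \Z[\Gamma]\,x_\Delta$ is cyclic.

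For (2): the closure $\overline{\Delta(X_f,\alpha_f)}$ is a closed subgroup of $X_f$, so by Pontryagin duality it equals $X_f$ if and only if its annihilator in $\widehat{X_f} = M_f = \Z[\Gamma]/\Z[\Gamma]f$ is trivial; since $\Z[\Gamma]\,x_\Delta \subset \Delta(X_f,\alpha_f)$, it suffices to show $(\Z[\Gamma]\,x_\Delta)^\perp = \{0\}$. Take $\chi = r + \Z[\Gamma]f \in M_f$ annihilating $\Z[\Gamma]\,x_\Delta$. Using the dual action of $\Z[\Gamma]$ on $\widehat{X_f}$ and the explicit pairing $M_f \times X_f \to \T$, one computes $\langle \chi, s\,x_\Delta \rangle = ((s^* r)\omega)_{1_\Gamma} \bmod 1$ for all $s \in \Z[\Gamma]$; since $s \mapsto s^*$ is a bijection of $\Z[\Gamma]$ and $(tr)\omega = t(r\omega)$ by \eqref{e:associativity}, the vanishing of all of these forces $(r\omega)_{\gamma^{-1}} = (\gamma(r\omega))_{1_\Gamma} \in \Z$ for every $\gamma \in \Gamma$, i.e.\ $r\omega \in \Z[\Gamma]$ (here using $r\omega \in \CC_0(\Gamma)$). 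Finally, multiplying on the right by $f$ and invoking $\omega f = 1_\Gamma$ together with Lemma \ref{l:associative}, $r = r\cdot 1_\Gamma = r(\omega f) = (r\omega)f \in \Z[\Gamma]f$, so $\chi = 0$, as required.

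I expect the only real friction to be the bookkeeping of where the involution $*$ sits in the pairing and dual-action formulas and which associativity rule — \eqref{e:associativity} versus Lemma \ref{l:associative} — applies at each step, since the factors involved live in different spaces among $\Z[\Gamma] \subset \ell^1(\Gamma) \subset \CC_0(\Gamma) \subset \ell^\infty(\Gamma)$. Conceptually, however, the argument is the natural extension of \cite[Lemma 4.5]{lind-schmidt}: the $\CC_0$-inverse $\omega$ plays the role that an $\ell^1$-inverse plays in the expansive case, simultaneously realizing $\Delta(X_f,\alpha_f) \cong \Z[\Gamma]/\Z[\Gamma]f^*$ and collapsing annihilators through the identity $\omega f = 1_\Gamma$.
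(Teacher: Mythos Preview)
Your proposal is correct and follows essentially the same approach as the paper: part (1) is identical, part (3) uses the injective $\varphi$ from Proposition~\ref{p:he-implies} and checks surjectivity via $\varphi(\pi(r\omega^*)) = r + \Z[\Gamma]f^*$, and part (2) is the same annihilator computation showing $r\omega \in \Z[\Gamma]$ and then $r = (r\omega)f \in \Z[\Gamma]f$. The only difference is that you prove the parts in the order (1), (3), (2) rather than (1), (2), (3), and that you phrase the pairing computation in terms of general $s \in \Z[\Gamma]$ rather than just $\gamma \in \Gamma$; the paper's computation corresponds to your special case $s = \gamma$, which is all that is actually needed.
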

\begin{proof}
Let $\omega, \omega' \in \CC_0(\Gamma)$ satisfying (we-2). Then $f\omega = 1_\Gamma = f\omega'$ yields
$f(\omega - \omega') = 0$ and condition (we-1) in Definition \ref{def:we} infers $\omega = \omega'$. This proves uniqueness of $\omega$.
Moreover, from Lemma \ref{l:associative} and (we-2) we deduce
\[
f(\omega f) = (f \omega) f = 1_\Gamma f = f = f 1_\Gamma.
\]
Thus, $f(\omega f - 1_\Gamma) = 0$, and, again by (we-1), we get $\omega f = 1_\Gamma$. This shows (1).

In order to prove (2), let now $a\in \Z[\Gamma]/\Z[\Gamma] f$ with $\langle a, \pi(\gamma\omega^*)\rangle=0$ for all $\gamma\in \Gamma$
(where $\pi \colon \ell^\infty(\Gamma)\rightarrow \T^\Gamma$ is as in Subsection \ref{ss:PAAds}).
Write $a=g+\Z[\Gamma] f$ for some $g\in \Z[\Gamma]$. Then
\[
\begin{split}
0 & = \langle a, \pi(\gamma\omega^*)\rangle\\
& = \sum_{\delta \in \Gamma} (\gamma\omega^*)_\delta g_\delta + \Z \\
& = \sum_{\delta \in \Gamma} (\gamma\omega^*)_\delta (g^*)_{\delta^{-1}} + \Z\\
& = (\gamma\omega^* g^*)_{1_\Gamma}+\Z\\
& =(\omega^* g^*)_{\gamma^{-1}}+\Z\\
& = (g\omega)_\gamma + \Z
\end{split}
\]
for all $\gamma\in \Gamma$. Thus $h\coloneqq g \omega$ lies in $\ell^\infty(\Gamma, \Z)$. Since $\omega\in \CC_0(\Gamma)$ and $g\in \Z[
\Gamma]$, one has $g\omega\in \CC_0(\Gamma)$. Therefore $h\in \ell^\infty(\Gamma, \Z)\cap \CC_0(\Gamma)=\Z[\Gamma]$. Using
Lemma \ref{l:associative} and (1) it follows that
\[
h f= (g \omega)f = g(\omega f) = g 1_\Gamma = g,
\]
and hence $g=hf\in \Z[\Gamma] f$, which means that $a=0$.
By Lemma \ref{l:char-homoclinic-shift} we have $\{\pi(\gamma\omega^*): \gamma \in \Gamma\} \subset \Delta(X_f, \alpha_f)$ and by Pontryagin duality we conclude that $\Delta(X_f, \alpha_f)$ is dense in $X_f$.

We are only left to prove (3). In the proof of Proposition \eqref{p:he-implies} (here we take $n=k=1$ and $A = f$)
we have defined an injective left $\Z[\Gamma]$-module morphism $\varphi \colon \Delta(X_f, \alpha_f)\rightarrow \Z[\Gamma]/\Z[\Gamma] f^*$ sending $x$ to $\tilde{x}f^*+\Z[\Gamma] f^*$. Let us show that $\varphi$ is surjective. Let $h\in \Z[\Gamma]$.
Using (we-2) and Lemma \ref{l:associative} we deduce that
\begin{multline*}
\varphi(h\pi(\omega^*))= \varphi(\pi(h \omega^*)) = (h \omega^*)f^* + \Z[\Gamma] f^* =
h(\omega^* f^*) + \Z[\Gamma] f^*\\ = h(f\omega)^* + \Z[\Gamma] f^* =
h 1_\Gamma + \Z[\Gamma] f^* = h+\Z[\Gamma] f^*.
\end{multline*}
This shows that $\varphi$ is surjective. Therefore $\varphi$ is indeed an isomorphism.
\end{proof}

It follows from the proof of Theorem \ref{t:we-implies}.(3) and the notation therein that the cyclic $\Z[\Gamma]$-module $\Delta(X_f,\alpha_f)$ is generated by the element $x^\Delta \coloneqq \pi(\omega^*) \in \Delta(X_f,\alpha_f)$, called the \emph{fundamental homoclinic point}
of $(X_f,\alpha_f)$ (cf.\ \cite{lind-schmidt}).

Let $(X,\alpha)$ be an algebraic dynamical system with an infinite acting group $\Gamma$ and denote by $\mu$ the Haar probability measure on $X$. Let $r \in \N$ with $r \geq 2$. One says that $(X,\alpha)$ is \emph{mixing of order $r$} if, for all measurable subsets $B_1, B_2,
\ldots, B_r \subset X$, one has
\begin{equation}
\label{e:def-mixing}
\mu(\gamma_1 B_{1} \cap \gamma_2 B_{2} \cap \cdots \cap \gamma_r B_{r})
 \longrightarrow \mu(B_{1})\mu(B_{2})\cdots \mu(B_{r})
\end{equation}
as $\gamma_i^{-1}\gamma_j \to \infty$ in $\Gamma$ for all $1 \leq i <j \leq r$. If $(X,\alpha)$ is mixing of order $r=2$, one simply
says that $(X,\alpha)$ is \emph{mixing}. Note that every mixing algebraic dynamical system is ergodic.
If $(X,\alpha)$ is mixing of order $r$ for all $r \geq 2$, one then says that $(X,\alpha)$ is \emph{mixing of all orders}.
\par
Observe that if $A$ is a compact metrizable Abelian group and $\Gamma$ is any infinite countable group, then the $\Gamma$-shift $(A^\Gamma,\sigma)$ is mixing of all orders since~\eqref{e:def-mixing} is trivially satisfied when the $B_i$s are cylinders, for all $r \geq 2$.
\par
It follows from \cite[Proposition 4.6]{Bowen-Li} that an algebraic dynamical system $(X,\alpha)$ admitting a dense homoclinic group
is mixing of all orders. From Theorem \ref{t:we-implies}.(2) we then deduce the following:

\begin{corollary}
\label{c:mixing}
Let $\Gamma$ be an infinite countable group and suppose that $f \in \Z[\Gamma]$ is weakly expansive. Then the associated
algebraic dynamical system $(X_f,\alpha_f)$ is mixing of all orders.
\end{corollary}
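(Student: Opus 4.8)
The plan is to obtain this as an immediate consequence of the density of the homoclinic group together with the cited structural result of Bowen and Li. Since $f$ is weakly expansive, Theorem~\ref{t:we-implies}.(2) gives that $\Delta(X_f,\alpha_f)$ is a dense subgroup of $X_f$; this is where all the genuine content lies, resting in turn on the two-sided invertibility $\omega f = f\omega = 1_\Gamma$ from Theorem~\ref{t:we-implies}.(1) and on the inclusion $\{\pi(\gamma\omega^*):\gamma\in\Gamma\}\subset\Delta(X_f,\alpha_f)$ supplied by Lemma~\ref{l:char-homoclinic-shift}. Then, because $\Gamma$ is infinite and $(X_f,\alpha_f)$ is an algebraic dynamical system with a dense homoclinic group, \cite[Proposition~4.6]{Bowen-Li}, quoted in the paragraph preceding the statement, yields at once that $(X_f,\alpha_f)$ is mixing of all orders. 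I would therefore present the proof as a two-line deduction chaining these two facts, the only thing to verify being that the hypotheses of the Bowen--Li proposition are met: an infinite acting group (part of the hypothesis of the corollary) and a dense homoclinic group (Theorem~\ref{t:we-implies}.(2)).

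There is essentially no obstacle here; the work has already been done in establishing density of $\Delta(X_f,\alpha_f)$. The only point requiring care is bookkeeping, namely carrying along the hypothesis that $\Gamma$ is infinite so that ``mixing of all orders'' is the correct and meaningful conclusion (recall that the notion is only defined for infinite acting groups).

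If one wished for an argument not invoking \cite{Bowen-Li}, an alternative would be to check the defining limit~\eqref{e:def-mixing} directly on a dense family of sets: using the Haar measure $\mu$ on $X_f$, finite linear combinations of characters are dense in $L^2(X_f,\mu)$, and for characters the integrand in~\eqref{e:def-mixing} is evaluated by the orthogonality relations, so it suffices to show that the nonzero characters involved become ``separated'' under sufficiently distant $\Gamma$-translates, exploiting that every $0\neq x\in\Delta(X_f,\alpha_f)$ satisfies $\gamma x\to 0_{X_f}$ and that $\Delta(X_f,\alpha_f)$, being dense, separates the points of $\widehat{X_f}$. However, this merely reproves \cite[Proposition~4.6]{Bowen-Li}, so I would not carry it out and would simply cite the proposition.
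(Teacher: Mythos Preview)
Your proposal is correct and matches the paper's own argument exactly: the paper deduces the corollary in one line from Theorem~\ref{t:we-implies}.(2) (density of $\Delta(X_f,\alpha_f)$) together with \cite[Proposition~4.6]{Bowen-Li}. There is nothing to add.
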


\subsection{Expansive principal algebraic actions}
\label{ss:expansivity}
The following result is due to Deninger and Schmidt~\cite[Theorem~3.2]{deninger-schmidt}
(see also \cite[Theorem~5.1]{lind-schmidt-survey-heisenberg}).

\begin{theorem}
\label{t:pads-expansive}
Let $\Gamma$ be a countable group and $f \in \Z[\Gamma]$.
Then the following conditions are equivalent:
\begin{enumerate}[\rm (a)]
\item
the dynamical system $(X_f,\alpha_f)$ is expansive;
\item
$f$ is invertible in $\ell^1(\Gamma)$.
\end{enumerate}
\end{theorem}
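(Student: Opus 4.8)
The plan is to reduce the statement to an algebraic property of right convolution by $f^{*}$ and then to handle the two implications very asymmetrically: one is elementary, while the converse is the genuine analytic content. Concretely, by Theorem \ref{t:p-exp}.(4) together with the case $k=n=1$, $A=f$, $p=\infty$ of Theorem \ref{t:p-exp}.(\ref{t:p-exp-last}), the system $(X_{f},\alpha_{f})$ is expansive if and only if the $\R[\Gamma]$-linear map $R_{f^{*}}\colon \ell^{\infty}(\Gamma)\to\ell^{\infty}(\Gamma)$, $g\mapsto gf^{*}$, is injective. So the whole statement amounts to: $R_{f^{*}}$ is injective on $\ell^{\infty}(\Gamma)$ if and only if $f$ is invertible in $\ell^{1}(\Gamma)$. (One can also obtain this reduction by hand, without citing Theorem \ref{t:p-exp}: using the translation-invariant description \eqref{e:X-f-subshift}, Proposition \ref{p:X-A-carac-lift}, and a dynamically-generating pseudometric, a point of $X_{f}$ all of whose translates are $\varepsilon$-close to $0_{X_{f}}$ lifts to some $g\in\ell^{\infty}(\Gamma)$ with $\|g\|_{\infty}\le\varepsilon$ and $gf^{*}\in\ell^{\infty}(\Gamma,\Z)$; if $\varepsilon<1/\|f\|_{1}$ then $\|gf^{*}\|_{\infty}<1$ forces $gf^{*}=0$, and conversely any $g\in\ker R_{f^{*}}$ can be rescaled into such a configuration, so expansivity is exactly $\ker R_{f^{*}}=\{0\}$.)

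For the implication ``$f$ invertible $\Rightarrow$ $(X_{f},\alpha_{f})$ expansive'', let $\omega\in\ell^{1}(\Gamma)$ be the two-sided inverse of $f$. Applying the involution and using \eqref{e:invoution-product} together with $1_{\Gamma}^{*}=1_{\Gamma}$ gives $f^{*}\omega^{*}=\omega^{*}f^{*}=1_{\Gamma}$. Hence, for $g\in\ell^{\infty}(\Gamma)$ with $gf^{*}=0$, the associativity rule \eqref{e:associativity} yields $0=(gf^{*})\omega^{*}=g(f^{*}\omega^{*})=g$. Thus $R_{f^{*}}$ is injective and, by the reduction above, $(X_{f},\alpha_{f})$ is expansive; the direct argument even exhibits the expansivity constant $\varepsilon_{0}=(2\|f\|_{1})^{-1}$. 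This direction is completely elementary.

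The substantive direction is ``$(X_{f},\alpha_{f})$ expansive $\Rightarrow$ $f$ invertible in $\ell^{1}(\Gamma)$'', i.e.\ that injectivity of $R_{f^{*}}$ on $\ell^{\infty}(\Gamma)$ forces $f$ to possess an $\ell^{1}$-inverse, and here I would follow Deninger--Schmidt. The idea is to manufacture from expansivity a \emph{summable} fundamental homoclinic point, in three steps. (i) Expansivity makes the homoclinic group $\Delta(X_{f},\alpha_{f})$ large enough to contain a fundamental homoclinic point, equivalently there is $\omega\in\CC_{0}(\Gamma)$ with $f\omega=1_{\Gamma}$ (such $\omega$ is then automatically unique, cf.\ Theorem \ref{t:we-implies}.(1)). (ii)---the crux---expansivity forces the homoclinic points of $(X_{f},\alpha_{f})$ to decay fast enough that in fact $\omega\in\ell^{1}(\Gamma)$: for $\Gamma=\Z^{d}$ this is transparent, since expansiveness is equivalent to $f$ having no zero on $\widehat{\Gamma}=\T^{d}$, whence $1/f$ is real-analytic on $\T^{d}$ and its sequence $\omega$ of Fourier coefficients decays exponentially; for general countable $\Gamma$ one passes to finite quotients and exploits uniform spectral bounds coming from expansiveness. (iii) Finally $f\omega=1_{\Gamma}$ with $\omega\in\ell^{1}(\Gamma)$ is upgraded to a genuine two-sided inverse (immediate when $\Gamma$ is abelian, and otherwise from direct finiteness of $\ell^{1}(\Gamma)$ or by rerunning the argument with $f^{*}$).

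The main obstacle is precisely step (ii): passing from a homoclinic point that merely vanishes at infinity to one that is absolutely summable. This is exactly the place where expansivity, rather than the weak expansivity studied elsewhere in this paper, is indispensable---for weakly expansive but non-expansive $f$, such as the harmonic models, one has $\omega\in\CC_{0}(\Gamma)\setminus\ell^{1}(\Gamma)$, so the conclusion genuinely fails there and no soft argument can close the gap.
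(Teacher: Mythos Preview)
The paper does not prove this theorem; it is quoted from Deninger--Schmidt \cite[Theorem~3.2]{deninger-schmidt} (with a parallel reference to \cite[Theorem~5.1]{lind-schmidt-survey-heisenberg}), so there is no in-paper proof to compare against. Your reduction of expansivity of $(X_f,\alpha_f)$ to injectivity of $R_{f^*}\colon g\mapsto gf^*$ on $\ell^\infty(\Gamma)$ is correct, and your argument for (b)$\Rightarrow$(a) is complete and is exactly the standard one.

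For (a)$\Rightarrow$(b), however, your outline has a real gap. Step~(i)---producing some $\omega\in\CC_0(\Gamma)$ with $f\omega=1_\Gamma$ from expansivity---is asserted, not proved: the sentence ``expansivity makes the homoclinic group large enough to contain a fundamental homoclinic point'' is precisely the nontrivial content, and the classical constructions of fundamental homoclinic points for expansive principal actions (e.g.\ \cite{lind-schmidt} for $\Z^d$) take $\ell^1$-invertibility as \emph{input}, not as output, so this step is circular as written. Step~(ii) for general $\Gamma$ via ``finite quotients and uniform spectral bounds'' presupposes residual finiteness, which an arbitrary countable group need not enjoy, so the argument cannot close in the stated generality. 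Your sketch is a perfectly good roadmap for $\Gamma=\Z^d$, where it collapses to Wiener's lemma, but it is not the route Deninger--Schmidt take for arbitrary countable $\Gamma$: their argument does not pass through a $\CC_0$-stage at all, but shows directly---using the $\ell^1$--$\ell^\infty$ duality and a short Banach-algebra/operator-algebra argument---that injectivity of convolution by $f^*$ on $\ell^\infty(\Gamma)$ forces $f$ to be invertible in $\ell^1(\Gamma)$. (Incidentally, $\ell^1(\Gamma)$ \emph{is} directly finite for every countable $\Gamma$, via the faithful embedding into the finite von Neumann algebra $L(\Gamma)$, so your step~(iii) is not an obstacle; the difficulty lies entirely in steps (i)--(ii).)
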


For other characterizations of expansivity for algebraic dynamical systems we refer to \cite{schmidt, schmidt-book}
(for $\Gamma = \Z^d$, $d \in \N$), \cite{miles} (for $\Gamma$ Abelian), \cite{einsiedler-rindler} (for
$(X,\alpha)$ finitely presented),  \cite{bhattacharya2} (for $X$ connected and finite-dimensional),
and \cite[Theorem 3.1]{Chung-Li}.

As observed in~\cite{lind-schmidt-survey-heisenberg}, if $f$ is \emph{lopsided}, i.e., there exists an element
$\gamma_0 \in \Gamma$ such that
$|f_{\gamma_0}| > \sum_{\gamma \not= \gamma_0} |f_\gamma|$,
then $f$ is invertible in $\ell^1(\Gamma)$.
On the other hand, there are $f \in \Z[\Gamma]$ invertible in $\ell^1(\Gamma)$ that are not lopsided.
For instance, if we take $\Gamma = \Z$, then the polynomial
$u^2 - u - 1 \in \Z[\Gamma] = \Z[u,u^{-1}]$
 is not lopsided although it is invertible in $\ell^1(\Gamma)$
(the associated principal algebraic dynamical system  is
conjugate to the $\Z$-system generated by Arnold's cat map $(x_1,x_2) \mapsto (x_2,x_1 + x_2)$ on the $2$-dimensional torus $\T^2$, see
e.g.~\cite[Example~2.18.(2)]{schmidt-book}).

The following result justifies our terminology for weakly expansive polynomials.

\begin{corollary}
\label{c:espansive-vs-weakly expansive}
Let $\Gamma$ be a countable group and $f \in \Z[\Gamma]$. Suppose that the dynamical system $(X_f,\alpha_f)$ is expansive.
Then $f$ is weakly expansive.
\end{corollary}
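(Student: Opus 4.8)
The plan is to reduce everything to the characterization of expansivity furnished by Theorem~\ref{t:pads-expansive}. Since $(X_f,\alpha_f)$ is assumed expansive, that theorem tells us that $f$ is invertible in the unital Banach $*$-algebra $\ell^1(\Gamma)$; let $\omega \in \ell^1(\Gamma)$ denote its inverse, so that $f\omega = \omega f = 1_\Gamma$ (invertibility in a unital Banach algebra is automatically two-sided). It then remains to verify conditions (we-1) and (we-2) of Definition~\ref{def:we}.

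Condition (we-2) is immediate: by the chain of inclusions~\eqref{e:inclusion} we have $\ell^1(\Gamma) \subset \CC_0(\Gamma)$, so $\omega \in \CC_0(\Gamma)$ and $f\omega = 1_\Gamma$, which is exactly what (we-2) asks for. For (we-1), suppose $g \in \CC_0(\Gamma) \subset \ell^\infty(\Gamma)$ satisfies $fg = 0$. Since $\omega$ and $f$ both lie in $\ell^1(\Gamma)$ and $g \in \ell^\infty(\Gamma)$, the associativity rule~\eqref{e:associativity} (in its version with two $\ell^1$-factors and one $\ell^\infty$-factor, applied with the roles $(\omega,f,g)$) gives
\[
g = 1_\Gamma g = (\omega f) g = \omega (f g) = \omega \cdot 0 = 0 .
\]
Hence $g = 0$, so (we-1) holds, and $f$ is weakly expansive.

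The argument is very short, and the only substantive input is Theorem~\ref{t:pads-expansive}; there is essentially no obstacle to overcome. The one point deserving a moment's attention is simply that the associativity one invokes is genuinely covered by~\eqref{e:associativity} (one cannot use Lemma~\ref{l:associative} here, since $\omega$ need not lie in $\R[\Gamma]$), together with the elementary observation that the $\ell^1(\Gamma)$-inverse of $f$ is two-sided.
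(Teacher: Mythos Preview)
Your proof is correct and follows essentially the same approach as the paper: invoke Theorem~\ref{t:pads-expansive} to obtain $\omega = f^{-1} \in \ell^1(\Gamma) \subset \CC_0(\Gamma)$ for (we-2), and use the associativity rule~\eqref{e:associativity} to deduce (we-1). Your explicit remark that the applicable associativity is~\eqref{e:associativity} rather than Lemma~\ref{l:associative} (since $\omega$ need not lie in $\R[\Gamma]$) is a nice clarification.
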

\begin{proof}
Expansivity of $(X_f, \alpha_f)$ implies, by Theorem \ref{t:pads-expansive}, that $f$ is invertible in $\ell^1(\Gamma)$.
Then $\omega \coloneqq f^{-1} \in \ell^1(\Gamma) \subset \CC_0(\Gamma)$ yields (we-2) in Definition \ref{def:we}.
\par
Let now $g \in \CC_0(\Gamma)$ and suppose that $fg = 0$. Then, recalling \eqref{e:associativity}, we deduce that
\[
0 = \omega 0 = \omega (fg) = (\omega f) g = 1_\Gamma g = g,
\]
and (we-1) follows as well.
\end{proof}

\subsection{Harmonic models}
\label{sec:harmonic}
Let $f \in \Z[\Gamma]$ be well-balanced. It follows from \eqref{e:X-f-subshift} that $x \in \T^\Gamma$ belongs to $X_f$
if and only if $x$ satisfies the {\it harmonicity equation} (mod $1$)
\[
\sum_{\eta \in \Gamma} f_{\eta} x(\gamma \eta)  = 0,
\]
for all $\gamma \in \Gamma$. This explains the terminology. Note that for $\Gamma = \Z^d$, the polynomial $f \in \Z[\Gamma] = \Z[u_1, u_1^{-1}, \ldots, u_d, u_d^{-1}]$ given by
\[
f = 2d - \sum_{i=1}^d (u_i + u_i^{-1})
\]
is well-balanced and the corresponding harmonicity equation is the discrete analogue of the Laplace equation (cf.\ Eq. (4.5) in
\cite{schmidt-verbitskiy}).

\begin{lemma}
\label{l:f-star-injective}
Let $\Gamma$ be a countable infinite group and $f \in \R[\Gamma]$. Suppose that $f$ is well-balanced.
Then the map $g \mapsto fg$ from $\CC_0(\Gamma)$ to $\CC_0(\Gamma)$ is injective.
In particular, harmonic models are homoclinically expansive.
\end{lemma}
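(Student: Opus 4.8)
The plan is to prove the injectivity of $g \mapsto fg$ by a discrete maximum principle built on the sign conditions (wb-1) and (wb-2), and then to deduce the statement about harmonic models from Theorem~\ref{t:char-homo-exp}. First I would record two elementary consequences of well-balancedness. Combining (wb-1) and (wb-2) gives
\[
f_{1_\Gamma} = -\sum_{\gamma \neq 1_\Gamma} f_\gamma = \sum_{\gamma \neq 1_\Gamma} |f_\gamma| \ge 0,
\]
and since $\Gamma$ is infinite while (wb-4) forces $\supp(f)$ to generate $\Gamma$, the support of $f$ cannot be contained in $\{1_\Gamma\}$, so $\supp(f) \setminus \{1_\Gamma\} \neq \emptyset$.

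Now suppose $g \in \CC_0(\Gamma)$ satisfies $fg = 0$ but $g \neq 0$, and let us derive a contradiction. Replacing $g$ by $-g$ if needed, we may assume $M \coloneqq \sup_{\gamma \in \Gamma} g_\gamma > 0$; as $g$ vanishes at infinity, this supremum is attained, so $S \coloneqq \{\gamma \in \Gamma : g_\gamma = M\}$ is nonempty. The key point is that $S$ is invariant under left translation by $\supp(f)$. Indeed, fix $\gamma \in S$ and expand $0 = (fg)_\gamma = \sum_{\gamma_1 \in \supp(f)} f_{\gamma_1}\, g_{\gamma_1^{-1}\gamma}$; substituting $f_{1_\Gamma} = -\sum_{\gamma_1 \neq 1_\Gamma} f_{\gamma_1}$ and $g_\gamma = M$ turns this into
\[
\sum_{\gamma_1 \in \supp(f) \setminus \{1_\Gamma\}} (-f_{\gamma_1})\bigl(M - g_{\gamma_1^{-1}\gamma}\bigr) = 0.
\]
By (wb-2) each coefficient $-f_{\gamma_1}$ is $\ge 0$ (indeed $>0$ on $\supp(f)$), and $M - g_{\gamma_1^{-1}\gamma} \ge 0$ by maximality of $M$, so every summand vanishes; hence $g_{\gamma_1^{-1}\gamma} = M$, i.e. $\gamma_1^{-1}\gamma \in S$, for every $\gamma_1 \in \supp(f)$. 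Since $\supp(f)$ is symmetric by (wb-3), this means $S$ is invariant under left translation by $\supp(f)$, hence by the subgroup it generates, which is all of $\Gamma$ by (wb-4); thus $S = \Gamma$. Then $g$ is the constant function $M > 0$, which is impossible for an element of $\CC_0(\Gamma)$ on the infinite group $\Gamma$. Therefore $g = 0$, and $g \mapsto fg$ is injective.

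For the last assertion, recall that a well-balanced $f \in \Z[\Gamma]$ is self-adjoint by (wb-3), so $f^* = f$; by Theorem~\ref{t:char-homo-exp} applied with $n = k = 1$ and $A = f$, the harmonic model $(X_f,\alpha_f)$ is homoclinically expansive precisely when $g \mapsto g f^* = gf$ is injective on $\CC_0(\Gamma)$. If $gf = 0$, then applying the involution (which reverses products and preserves $\CC_0(\Gamma)$) gives $fg^* = f^* g^* = (gf)^* = 0$, whence $g^* = 0$ by the part just proved, so $g = 0$; this is the required injectivity. I do not expect any serious obstacle here: the only care needed is at the two points where vanishing at infinity enters — making $\sup g$ positive and attained, and turning "$g \equiv M$" into a contradiction — together with the propagation of the equality $g = M$ along the generating set $\supp(f)$. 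The genuine content is simply the observation that the sign pattern of a well-balanced polynomial makes $x \mapsto fx$ a (negative) discrete Laplacian obeying a maximum principle.
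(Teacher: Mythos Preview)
Your proof is correct and follows essentially the same approach as the paper's: a discrete maximum principle exploiting the sign pattern of a well-balanced $f$, followed by an appeal to Theorem~\ref{t:char-homo-exp}. The only differences are cosmetic---the paper first normalizes $f$ to a symmetric probability measure $\mu \coloneqq 1_\Gamma - f/f_{1_\Gamma}$ and runs the argument with $\max |g|$ rather than $\sup g$ after a sign flip---but the propagation step and the use of (wb-4) to conclude $S=\Gamma$ are identical.
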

\begin{proof} Set
\begin{equation}
\label{e:mu}
\mu \coloneqq 1_\Gamma - \frac{1}{f_{1_\Gamma}}f  \in \R[\Gamma].
\end{equation}
Then $\mu$ is a probability measure on $\Gamma$ which is symmetric and its support $S \coloneqq \supp(\mu)$ generates $\Gamma$ as a semigroup, by ({\rm wb-1}), ({\rm wb-3}), and ({\rm wb-4}), respectively.
\par
In order to show (we-1) we apply the maximum principle. Let $g \in \CC_0(\Gamma)$ and suppose that $fg = 0$, equivalently, $\mu g = g$.
Set $M \coloneqq \max_{\delta \in \Gamma} |g_\delta|$ and observe that $A \coloneqq \{\gamma \in \Gamma: |g_\gamma| = M\}$ is non-empty. Moreover, if $\gamma \in A$ one has, using the triangle inequality and the properties of $\mu$ we alluded to above,
\[
M = |g_\gamma| = |(\mu g)_\gamma| \leq \sum_{\delta \in S} \mu_{\delta^{-1}} |g_{\delta\gamma}| \leq  \sum_{\delta \in S} M \mu_\delta = M,
\]
forcing $|g_{\delta \gamma}| = M$ for all $\delta \in S$. This shows that $SA \subset A$. A recursive argument immediately shows that
$S^nA \subset A$ for all $n \in \N$. Since $S$ generates $\Gamma$ as a semigroup, we get that $A = \Gamma$. In other words, $|g|$ is a constant function. As $g \in \CC_0(\Gamma)$, we conclude that $g = 0$.
\par
The last statement follows immediately after Theorem \ref{t:char-homo-exp}.
\end{proof}

In the arguments preceding Lemma 4.8 in \cite{Bowen-Li} it is shown that if $\Gamma$ is a countable infinite group which is not virtually $\Z$ or $\Z^2$ and $f \in \Z[\Gamma]$ is well-balanced, then
\begin{equation}
\label{e:omega3}
\omega \coloneqq \frac{1}{f_{1_\Gamma}} \sum_{k = 0}^\infty \mu^k \in \CC_0(\Gamma),
\end{equation}
where $\mu$ is as in \eqref{e:mu}, satisfies that $f\omega = 1_\Gamma$, so that (we-2) holds.
Combining this with Lemma \ref{l:f-star-injective}, we deduce:

\begin{proposition}
\label{p:wb-implies-we}
Let $\Gamma$ be a countable infinite group $\Gamma$ that is not virtually $\Z$ or $\Z^2$.
Then every balanced polynomial $f \in \Z[\Gamma]$ is weakly expansive.
\end{proposition}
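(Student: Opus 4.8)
The route is essentially the one indicated in the discussion preceding the statement: verify the two conditions (we-1) and (we-2) of Definition~\ref{def:we} separately. First note that well-balancedness forces $\supp(f)$ to be finite and to generate $\Gamma$, by (wb-4), so $\Gamma$ is automatically finitely generated; moreover $f \neq 0$ and, by (wb-1)--(wb-3), $f_{1_\Gamma} = \sum_{\gamma \neq 1_\Gamma}|f_\gamma| > 0$, so division by $f_{1_\Gamma}$ is legitimate. Condition (we-1) --- the injectivity of $g \mapsto fg$ on $\CC_0(\Gamma)$ --- is precisely the assertion of Lemma~\ref{l:f-star-injective}, which is proved there by a maximum-principle argument for the symmetric random walk with step distribution $\mu = 1_\Gamma - f_{1_\Gamma}^{-1}f$; so this half requires nothing further and, notably, does not use the hypothesis on $\Gamma$.

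For (we-2) I would exhibit $\omega$ explicitly as the Green's function of that walk. With $\mu$ as in \eqref{e:mu}, $\mu$ is a symmetric probability measure on $\Gamma$ whose support generates $\Gamma$, and the natural candidate is $\omega \coloneqq f_{1_\Gamma}^{-1}\sum_{k \geq 0}\mu^k$. Granting that this series converges pointwise and defines an element of $\CC_0(\Gamma)$, one obtains, via $\mu^k \to 0$ and telescoping, $f\omega = f_{1_\Gamma}(1_\Gamma - \mu)\omega = \sum_{k \geq 0}(\mu^k - \mu^{k+1}) = 1_\Gamma$, which is exactly (we-2). Thus the whole matter reduces to showing that $\sum_{k}\mu^k(\gamma) < \infty$ for every $\gamma$ and that $\sum_{k}\mu^k(\gamma) \to 0$ as $\gamma \to \infty$ --- in other words, that the walk is transient with Green's function vanishing at infinity.

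This is the crux, and the only place the hypothesis is used: recurrence of a symmetric finitely supported random walk on $\Gamma$ is equivalent to $\Gamma$ being virtually $\Z$ or $\Z^2$, so outside those cases the walk is transient, and the quantitative return-probability estimates needed to upgrade transience to $\CC_0$-decay of the Green's function are exactly those carried out in the paragraphs preceding \cite[Lemma~4.8]{Bowen-Li}. Invoking that computation --- which yields $\omega \in \CC_0(\Gamma)$ with $f\omega = 1_\Gamma$ --- and combining it with Lemma~\ref{l:f-star-injective} establishes that $f$ is weakly expansive. I expect the analytic decay estimate for the Green's function to be the only genuine difficulty; the algebraic bookkeeping around it is routine.
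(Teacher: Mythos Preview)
Your proposal is correct and follows essentially the same approach as the paper: (we-1) is exactly Lemma~\ref{l:f-star-injective}, and (we-2) is obtained by exhibiting the Green's function $\omega = f_{1_\Gamma}^{-1}\sum_{k\ge 0}\mu^k$ and invoking the arguments preceding \cite[Lemma~4.8]{Bowen-Li} to see that $\omega \in \CC_0(\Gamma)$ when $\Gamma$ is not virtually $\Z$ or $\Z^2$. Your added remarks on the telescoping identity and the link to Varopoulos' transience criterion flesh out what the paper leaves implicit, but the strategy is the same.
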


\begin{remark}
It is a well known fact in the theory of Markov chains (cf.\ for instance \cite[Definition 1.14]{woess}) that the sum in \eqref{e:omega3} expressing $\omega$ is (pointwise) convergent if and only if the random walk on $\Gamma$ associated with $\mu$  is \emph{transient} (i.e., given any finite subset $\Omega \subset \Gamma$, there exists $t(\Omega) \in \N$ such that, with probability one, the position $x(t) \in \Gamma$ of the random walker on $\Gamma$ at time $t \geq t(\Omega)$ satisfies that $x(t) \in \Gamma \setminus \Omega$) and it is a deep result of Varopoulos (cf.\ \cite{varopoulos, varopoulos-s-c} and \cite[Theorem 3.24]{woess}) that this is the case exactly if $\Gamma$ is not virtually $\Z$ or $\Z^2$.
\end{remark}

\section{Topological rigidity}
\label{sec:affine}

\subsection{Affine maps}
Let $X$ and $Y$ be two topological Abelian groups.
A map $\tau \colon Y \to X$ is called \emph{affine} if there is a continuous group morphism
$\lambda \colon Y \to X$ and an element $t \in X$ such that
$\tau(y) = \lambda(y) + t$ for all $y \in Y$.
Note that $\lambda$ and  $t$ are then uniquely determined by $\tau$
since they must satisfy $t = \lambda(0_Y)$ and $\lambda(y) = \tau(y) - t$ for all $y \in Y$.
One says that
$\lambda$ and $t$ are respectively the \emph{linear part}  and the \emph{translational part} of the affine map $\tau$.
\par
The following two obvious criteria will be useful in the sequel.

\begin{proposition}
\label{p:pre-inj-affine}
Let $(X,\alpha)$ be an algebraic dynamical system and let $\tau \colon X \to X$ be an affine map
with linear part $\lambda \colon X \to X$.
Then the following conditions are equivalent:
\begin{enumerate}[\rm (a)]
\item
$\tau$ is pre-injective;
\item
$\lambda$ is pre-injective;
\item
$\Ker(\lambda) \cap \Delta(X,\alpha) = \{0_X\}$.\end{enumerate}
\end{proposition}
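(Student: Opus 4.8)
The statement is a formal consequence of the two facts collected in Subsection~\ref{sec:homoclinic}: in an algebraic dynamical system the homoclinicity classes are exactly the cosets of the homoclinic group $\Delta(X,\alpha)$, and $x,y \in X$ are homoclinic if and only if $x - y \in \Delta(X,\alpha)$. I would combine this with the elementary observation that, writing $\tau(x) = \lambda(x) + t$ with $t = \tau(0_X)$, the map $\tau$ is the composition of $\lambda$ with the translation by $t$, which is a bijection of $X$; hence for any subset $C \subset X$ the restriction $\tau|_C$ is injective if and only if $\lambda|_C$ is injective.

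The plan is to prove the chain (a) $\Leftrightarrow$ (b) and (b) $\Leftrightarrow$ (c). For (a) $\Leftrightarrow$ (b), I would simply apply the previous remark with $C$ ranging over the homoclinicity classes of $(X,\alpha)$: $\tau$ is pre-injective (i.e.\ injective on each such class) if and only if $\lambda$ is. For (b) $\Leftrightarrow$ (c), I would fix a homoclinicity class $C = x_0 + \Delta(X,\alpha)$ and use that $\lambda$ is a group morphism and $\Delta(X,\alpha)$ a subgroup: for $u,v \in \Delta(X,\alpha)$ one has $\lambda(x_0 + u) = \lambda(x_0 + v)$ if and only if $\lambda(u - v) = 0_X$, that is, $u - v \in \Ker(\lambda) \cap \Delta(X,\alpha)$. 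Since $u - v$ runs over all of $\Delta(X,\alpha)$ as $u,v$ run over $\Delta(X,\alpha)$, injectivity of $\lambda|_C$ is equivalent to $\Ker(\lambda) \cap \Delta(X,\alpha) = \{0_X\}$. As the latter condition is independent of the class $C$, pre-injectivity of $\lambda$ on one class, on all classes, and condition (c) are all equivalent, which closes the argument.

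There is essentially no obstacle here; the proposition is purely formal. The only points that need to be invoked (rather than proved afresh) are that the linear part $\lambda$ of an affine map is a genuine continuous group morphism, guaranteed by the definition of affine map in Subsection~4.1, and that $\Delta(X,\alpha)$ is a subgroup of $X$ with the property that its difference set equals itself, established in Subsection~\ref{sec:homoclinic}.
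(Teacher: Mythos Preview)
Your argument is correct and is exactly the natural unpacking of what the paper means when it calls this criterion ``obvious'' (no proof is given in the paper). There is nothing to add or compare.
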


\begin{proposition}
\label{p:charact-equiv-affine}
Let $(X,\alpha)$ be an algebraic dynamical system and let $\tau \colon X \to X$ be an affine map
with linear part $\lambda \colon X \to X$ and translational part $t \in X$.
Then the following conditions are equivalent:
\begin{enumerate}[\rm (a)]
\item
$\tau$ is $\Gamma$-equivariant;
\item
$\lambda$ is $\Gamma$-equivariant and $t \in \Fix(X,\alpha)$.
\end{enumerate}
\end{proposition}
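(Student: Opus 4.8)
The final statement is Proposition~\ref{p:charact-equiv-affine}, and the plan is to verify the equivalence by direct computation, exploiting the uniqueness of the linear and translational parts of an affine map.

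\textbf{Approach.} Write $\tau(x) = \lambda(x) + t$ with $\lambda \colon X \to X$ a continuous group morphism and $t \in X$, noting that $t = \tau(0_X)$ and $\lambda(x) = \tau(x) - \tau(0_X)$ are uniquely determined by $\tau$. The direction (b)$\Rightarrow$(a) is immediate: if $\lambda$ is $\Gamma$-equivariant and $\gamma t = t$ for all $\gamma \in \Gamma$, then $\tau(\gamma x) = \lambda(\gamma x) + t = \gamma\lambda(x) + \gamma t = \gamma(\lambda(x)+t) = \gamma\tau(x)$, using that each $\gamma$ acts on $X$ by a group automorphism. For (a)$\Rightarrow$(b), first apply $\Gamma$-equivariance of $\tau$ at $x = 0_X$: since $\gamma 0_X = 0_X$ (the action is by group automorphisms), we get $\gamma t = \gamma\tau(0_X) = \tau(\gamma 0_X) = \tau(0_X) = t$, so $t \in \Fix(X,\alpha)$. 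Then for general $x$ and $\gamma$, compute $\gamma\lambda(x) = \gamma(\tau(x) - t) = \gamma\tau(x) - \gamma t = \tau(\gamma x) - t = \lambda(\gamma x)$, again using that $\gamma$ is a group automorphism and the fact just established that $\gamma t = t$. Hence $\lambda$ is $\Gamma$-equivariant.

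\textbf{Key steps, in order.} (i) Record that $t = \tau(0_X)$; (ii) prove (b)$\Rightarrow$(a) by the one-line computation above; (iii) for (a)$\Rightarrow$(b), evaluate equivariance of $\tau$ at $0_X$ to deduce $t \in \Fix(X,\alpha)$; (iv) subtract $t$ and use additivity of the $\Gamma$-action to transfer equivariance from $\tau$ to $\lambda$.

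\textbf{Main obstacle.} There is essentially no obstacle: the only subtlety is remembering that in an algebraic dynamical system the group $\Gamma$ acts by \emph{continuous group automorphisms}, so each $\gamma$ fixes $0_X$ and commutes with the group operation — these are precisely the two facts that make the computations go through. I would present this as an elementary verification, consistent with the ``obvious criteria'' remark preceding the two propositions.
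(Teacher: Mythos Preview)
Your argument is correct and is exactly the elementary verification one expects; the paper itself gives no proof at all, labeling this proposition (together with the preceding one) as ``obvious criteria''. Your write-up could serve verbatim as the omitted proof.
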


\subsection{Topological rigidity}
One says that the algebraic dynamical system $(X,\alpha)$ is \emph{topologically rigid} if every
endomorphism $\tau \colon X \to X$ of $(X,\alpha)$ is affine.

Before stating our rigidity results, let us introduce some notation.
Let $L(X)$ denote the real vector space of all group homomorphisms $\widehat{X} \to \R$ equipped with the
topology of pointwise convergence. Note that $\Gamma$ acts on $L(X)$ by setting $[\gamma \psi](\chi) \coloneqq
\psi(\gamma^{-1}\chi)$ for all $\psi \in L(X)$ and $\chi \in \widehat{X}$. Moreover, the map $E \colon
L(X) \to X$ defined by $E(\psi)(\chi) \coloneqq \psi(\chi) + \Z$ for all $\psi \in L(X)$ and
$\chi \in \widehat{X}$ is a continuous ($\Gamma$-equivariant) group homomorphism.

\begin{theorem}
\label{T-rigidity}
Let $(X, \alpha)$ and $(Y, \beta)$ be algebraic dynamical systems with acting group $\Gamma$. Suppose that $Y$ is connected and that the homoclinic group $\Delta(Y, \beta)$ is dense in $Y$. Also suppose that $(X, \alpha)$ is homoclinically expansive.
Then every $\Gamma$-equivariant continuous map $Y\rightarrow X$ is affine.
\end{theorem}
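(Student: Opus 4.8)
The plan is to extract from $\tau$ a canonical \emph{linear part}, show that the residual error lifts to the real vector space $L(X)$, and then destroy this error by a scaling argument fuelled by homoclinic expansivity of $(X,\alpha)$.

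First, since $\tau$ is $\Gamma$-equivariant the point $\tau(0_Y)$ lies in $\Fix(X,\alpha)$, so after replacing $\tau$ by $\tau-\tau(0_Y)$ I may assume $\tau(0_Y)=0_X$; by Proposition \ref{p:charact-equiv-affine} it then suffices to prove that this normalised $\tau$ is a continuous group morphism. For each $\chi\in\widehat X$ the map $\chi\circ\tau\colon Y\to\T$ is continuous, and since $Y$ is compact, connected and metrizable, the structure theory of compact abelian groups shows that every continuous map $Y\to\T$ is the sum of a character of $Y$ and the reduction mod $1$ of a continuous $\R$-valued function, both unique once the real function is normalised to vanish at $0_Y$. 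Writing $\chi\circ\tau=\psi(\chi)+(g_\chi\bmod 1)$ accordingly, with $\psi(\chi)\in\widehat Y$ and $g_\chi\colon Y\to\R$ continuous, $g_\chi(0_Y)=0$, uniqueness forces $\chi\mapsto\psi(\chi)$ to be additive, and the identity $(\gamma\chi)\circ\tau=(\chi\circ\tau)\circ(\gamma^{-1}\cdot)$ (with $(\gamma^{-1}\cdot)$ the map $y\mapsto\gamma^{-1}y$) forces it to be $\Z[\Gamma]$-linear, while also giving $g_{\gamma\chi}=g_\chi\circ(\gamma^{-1}\cdot)$ and additivity of $\chi\mapsto g_\chi$. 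Dualising $\psi\colon\widehat X\to\widehat Y$ produces a continuous $\Gamma$-equivariant group morphism $\lambda\coloneqq\widehat\psi\colon Y\to X$, and one checks $\chi\circ\lambda=\psi(\chi)$ for every $\chi$; this $\lambda$ is the candidate linear part.

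Next I would analyse the error $h\coloneqq\tau-\lambda$. It is continuous, $\Gamma$-equivariant, $h(0_Y)=0_X$, and $\chi\circ h=g_\chi\bmod 1$ for all $\chi$; hence $h=E\circ\Theta$, where $\Theta\colon Y\to L(X)$, $\Theta(y)(\chi)\coloneqq g_\chi(y)$, is continuous, $\Gamma$-equivariant and $\Theta(0_Y)=0$. By uniform continuity and $\Gamma$-equivariance, $h$ carries the homoclinicity class of $0_Y$ into that of $0_X$, i.e.\ $h(\Delta(Y,\beta))\subseteq\Delta(X,\alpha)$. The scaling step is the heart of the matter: for $n\in\N$ put $h_n\coloneqq E\circ(\tfrac1n\Theta)\colon Y\to X$, again continuous, $\Gamma$-equivariant, with $h_n(0_Y)=0_X$; and since for $y\in\Delta(Y,\beta)$ one has $\tfrac1n g_\chi(\gamma y)\to 0$ as $\gamma\to\infty$ for each $\chi$ (discreteness of $\widehat X$ then transfers this to convergence of $\gamma h_n(y)=h_n(\gamma y)$ in $X$), one still has $h_n(\Delta(Y,\beta))\subseteq\Delta(X,\alpha)$. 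Now choose a finite $W\subset\widehat X$ and $\varepsilon>0$ as in Proposition \ref{P-h expansive} for the homoclinically expansive action $\alpha$. For $y\in\Delta(Y,\beta)$ and $\chi\in W$ one has $\|\Psi_{h_n(y),\chi}\|_\infty\le\tfrac1n\|g_\chi\|_\infty$, so once $n>\varepsilon^{-1}\sum_{\chi\in W}\|g_\chi\|_\infty$ we get $\sum_{\chi\in W}\|\Psi_{h_n(y),\chi}\|_\infty<\varepsilon$ while $h_n(y)$ is homoclinic to $0_X$; Proposition \ref{P-h expansive} then forces $h_n(y)=0_X$, i.e.\ $g_\chi(y)\in n\Z$ for all $\chi\in\widehat X$. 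Letting $n\to\infty$ yields $g_\chi(y)=0$ for all $\chi$, so $\Theta(y)=0$ and $h(y)=0_X$. Thus $h$ vanishes on $\Delta(Y,\beta)$, which is dense in $Y$, whence $h\equiv 0$ by continuity, and $\tau=\lambda$ is affine.

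The hardest part is this last step: homoclinic expansivity only supplies a \emph{fixed} threshold $\varepsilon$, and the trick is that the lift $\Theta$ may be shrunk by an arbitrarily large integer factor while $h_n$ still preserves homoclinicity classes and maps into $X$ — which is precisely what forces the error to die on homoclinic points, the integrality $g_\chi(y)\in n\Z$ (valid for all large $n$) then closing the loop. The other delicate ingredient is the splitting in the second paragraph, which is where connectedness of $Y$ enters and which also yields uniqueness of the $g_\chi$; density of $\Delta(Y,\beta)$ is used only at the very end. In a full write-up I would take care to verify the naturality claims ($\psi$ is $\Z[\Gamma]$-linear, $g_{\gamma\chi}=g_\chi\circ(\gamma^{-1}\cdot)$, $\chi\mapsto g_\chi$ additive), the identity $\chi\circ\lambda=\psi(\chi)$, and that $h_n$ really does preserve homoclinicity classes.
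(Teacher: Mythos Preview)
Your proof is correct and follows essentially the same strategy as the paper's. The paper obtains the decomposition $\tau=\lambda+E\circ\Phi$ by directly invoking Bhattacharya's extension of van Kampen's theorem \cite[Theorem~1]{bhattacharya} as a black box, whereas you sketch its proof by splitting each $\chi\circ\tau$ into a character plus a real lift; this is exactly the content of that theorem, so the two arguments coincide. In the scaling step the paper uses a continuous parameter $t\in\R$ with $|t|<\varepsilon/C$ (concluding that $t\Phi(y)$ is integer-valued for all small $t$, hence $\Phi(y)=0$), while you use $t=1/n$ for large integers $n$ and argue that $g_\chi(y)\in n\Z$ for all large $n$; these are equivalent formulations of the same idea.
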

\begin{proof} Let $\tau \colon Y \to X$ be a $\Gamma$-equivariant continuous map. By Bhattacharya's extension of van Kampen theorem
\cite[Theorem 1]{bhattacharya}, there are a $\Gamma$-equivariant affine map $\lambda \colon Y\to X$ and a $\Gamma$-equivariant continuous map $\Phi \colon Y \to L(X)$ such that $\Phi(0_Y)=0$ and $\tau=\lambda + E\circ \Phi$.
(Note that in the statement of \cite[Theorem 1]{bhattacharya}, $X$ is assumed to be connected as well; however, in its proof, this condition
is never used.)
Thus it suffices to show that $\Phi=0$.
Let $y\in \Delta(Y, \beta)$ and $\chi \in \widehat{X}$. For any $\gamma\in \Gamma$ we have
\begin{align*}
[\Phi(\gamma y)](\chi)=[\gamma \Phi(y)](\chi)=[\Phi(y)](\gamma^{-1}\chi).
\end{align*}
When $\gamma\to \infty$, we have $\Phi(\gamma y)\to \Phi(0_Y)=0$, and hence $[\Phi(y)](\gamma^{-1}\chi)\to 0$.
For any $t\in \R$, we have
\begin{multline*}
\Psi_{E(t\Phi(y)), \chi}(\gamma)=|\langle\gamma E(t\Phi(y)), \chi\rangle|=|\langle E(t\Phi(y)), \gamma^{-1}\chi\rangle|\\
\le |[t\Phi(y)](\gamma^{-1}\chi)|=|t|\cdot|[\Phi(y)](\gamma^{-1}\chi)|
\end{multline*}
for all $\gamma\in \Gamma$, and hence $\Psi_{E(t\Phi(y)), \chi}\in \CC_0(\Gamma)$. Therefore $E(t\Psi(y))\in \Delta(X, \alpha)$.
Since $\alpha$ is homoclinically expansive, by Proposition~\ref{P-h expansive} there exist a finite subset $W$ of $\widehat{X}$ and
$\varepsilon>0$ such that $0_X$ is the only point $x$ in $\Delta(X, \alpha)$ satisfying
\[
\sum_{\chi \in W}\|\Psi_{x, \chi}\|_\infty<\varepsilon.
\]
Set $C \coloneqq \sum_{\chi\in W}\sup_{\gamma\in \Gamma}|[\Phi(y)](\gamma^{-1}\chi)|<\infty$. Then
\[
\sum_{\chi\in W}\|\Psi_{E(t\Phi(y)), \chi}\|_\infty\le |t|C.
\]
Thus for all $t\in \R$ with $|t|<\varepsilon/C$ we have $E(t\Phi(y))=0_X$, which means that $t\Phi(y)$ takes integer values.
It follows that $\Phi(y)=0$. Since $\Delta(Y, \beta)$ is dense in $Y$ and $\Phi$ is continuous, we conclude that $\Phi=0$ as desired.
\end{proof}

\begin{corollary}
\label{c:endo-princ}
Let $\Gamma$ be a countable group. Suppose that $f \in \Z[\Gamma]$ satisfies that $(X_f,\alpha_f)$ is homoclinically expansive
and $\Delta(X_f,\alpha_f)$ is dense in $X_f$ (e.g., that $f$ is weakly expansive), and the phase space $X_f$ is connected.
Then $(X_f, \alpha_f)$ is topologically rigid.
\par
If, in addition, $\Gamma$ is Abelian, then for a map $\tau \colon X_f \to X_f$ the following conditions are equivalent:
\begin{enumerate}[\rm(a)]
\item
$\tau$ is an endomorphism of the dynamical system $(X_f,\alpha_f)$;
\item
there exist $r \in \Z[\Gamma]$ and $t \in \Fix(X_f,\alpha_f)$ such that
$\tau(x) = r x + t$ for all $x \in X_f$.
\end{enumerate}
\end{corollary}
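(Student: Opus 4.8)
The plan for the first assertion is a one-line appeal to Theorem~\ref{T-rigidity}, applied with both algebraic dynamical systems taken to be $(X_f,\alpha_f)$ itself: its three hypotheses --- the source system is connected, its homoclinic group is dense, and the target system is homoclinically expansive --- are exactly the standing assumptions here, so every endomorphism of $(X_f,\alpha_f)$ is affine, that is, $(X_f,\alpha_f)$ is topologically rigid. I would also unwind the parenthetical: if $f$ is weakly expansive then $\alpha_f$ is homoclinically expansive by the remark following Theorem~\ref{t:char-homo-exp} (this is condition (we-1) of Definition~\ref{def:we}), and $\Delta(X_f,\alpha_f)$ is dense in $X_f$ by Theorem~\ref{t:we-implies}.(2).

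For the equivalence (a)$\Leftrightarrow$(b) assume in addition that $\Gamma$ is Abelian, so that $\Z[\Gamma]$ is commutative. The implication (b)$\Rightarrow$(a) is routine and I would dispatch it directly: writing $r=\sum_{\gamma}r_{\gamma}\gamma$, the map $x\mapsto rx$ is a finite $\Z$-linear combination of the continuous automorphisms $x\mapsto\gamma x$, hence continuous, and so is $\tau(x)=rx+t$; moreover $\tau$ is $\Gamma$-equivariant because commutativity gives $r(\gamma x)=(\gamma r)x=\gamma(rx)$ while $\gamma t=t$ (as $t\in\Fix(X_f,\alpha_f)$), so $\tau(\gamma x)=\gamma\tau(x)$. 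Hence $\tau$ is an endomorphism of $(X_f,\alpha_f)$.

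The substance is in (a)$\Rightarrow$(b). Given an endomorphism $\tau$ (a continuous $\Gamma$-equivariant self-map), the first part yields $\tau=\lambda+t$ with $t=\tau(0_{X_f})$ and $\lambda\colon X_f\to X_f$ a continuous group homomorphism; Proposition~\ref{p:charact-equiv-affine} then forces $\lambda$ to be $\Gamma$-equivariant --- hence a $\Z[\Gamma]$-module endomorphism of $X_f$ --- and $t\in\Fix(X_f,\alpha_f)$. It remains to identify $\lambda$ with multiplication by some $r\in\Z[\Gamma]$, and I would do this by Pontryagin duality: under the canonical $\Z[\Gamma]$-module isomorphism $\widehat{X_f}\cong M_f=\Z[\Gamma]/\Z[\Gamma]f$, the dual $\widehat{\lambda}$ is a $\Z[\Gamma]$-module endomorphism of the cyclic module $M_f$; writing $e$ for the canonical generator and $\widehat{\lambda}(e)=s\,e$ with $s\in\Z[\Gamma]$, $\Z[\Gamma]$-linearity and commutativity give $\widehat{\lambda}(g\,e)=g\,\widehat{\lambda}(e)=s\,g\,e$ for all $g\in\Z[\Gamma]$, i.e.\ $\widehat{\lambda}$ is multiplication by $s$; dualizing back, using the identity $\langle s\chi,m\rangle=\langle\chi,s^{*}m\rangle$ on $X_f=\widehat{M_f}$, shows that $\lambda$ is multiplication by $r\coloneqq s^{*}\in\Z[\Gamma]$, whence $\tau(x)=rx+t$. (Alternatively, when $f$ is weakly expansive one can bypass duality altogether: $\lambda$ preserves the dense cyclic submodule $\Delta(X_f,\alpha_f)=\Z[\Gamma]\,x^{\Delta}$ of Theorem~\ref{t:we-implies} by continuity and equivariance, one reads off $r$ from $\lambda(x^{\Delta})=r\,x^{\Delta}$, and the identity $\lambda(x)=rx$ propagates from $\Delta(X_f,\alpha_f)$ to all of $X_f$ by continuity.)

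So the proof is essentially a chain of citations --- to Theorem~\ref{T-rigidity}, Proposition~\ref{p:charact-equiv-affine}, and Theorem~\ref{t:we-implies} --- and there is no analytic difficulty. The one place that genuinely requires care is the bookkeeping of the $\Z[\Gamma]$-action (and of the involution $f\mapsto f^{*}$) under Pontryagin duality; it is also exactly there, together with the equivariance check in (b)$\Rightarrow$(a), that the Abelian hypothesis enters, since only commutativity of $\Z[\Gamma]$ makes ``multiplication by a ring element'' both an equivariant map and the generic form of a $\Z[\Gamma]$-linear endomorphism of the cyclic module $M_f$.
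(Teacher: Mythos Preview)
Your proof is correct and follows essentially the same route as the paper's: apply Theorem~\ref{T-rigidity} with $X=Y=X_f$ for topological rigidity, then for (a)$\Rightarrow$(b) use Proposition~\ref{p:charact-equiv-affine} and identify $\widehat{\lambda}$ as multiplication by some $s\in\Z[\Gamma]$ on the cyclic module $M_f$, concluding $r=s^{*}$. Your parenthetical alternative via the fundamental homoclinic point $x^{\Delta}$ is a pleasant extra not in the paper; it works because any continuous $\Gamma$-equivariant group endomorphism preserves $\Delta(X_f,\alpha_f)$, and the ambiguity of $r$ modulo $\Z[\Gamma]f^{*}$ is harmless since $f^{*}$ annihilates $X_f$ by~\eqref{e:X-f-subshift}.
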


\begin{proof} The first statement follows immediately from Theorem \ref{T-rigidity} after taking
$X = Y = X_f$ (if $f$ is weakly expansive, recall Theorem \ref{t:char-homo-exp} and Theorem \ref{t:we-implies}.(2)).
\par
Suppose now that $\Gamma$ is Abelian and let $\tau \colon X_f \to X_f$ be a map.
For each $r \in \Z[\Gamma]$,  the self-mapping of $X_f$ given by $x \mapsto r x$ is
$\Gamma$-equivariant since $\Z[\Gamma]$ is commutative.
Therefore, the fact that (b) implies (a) follows from Proposition~\ref{p:charact-equiv-affine}.
\par
To prove the converse, suppose that $\tau$ is an endomorphism of the dynamical
system $(X_f,\alpha_f)$.
It follows from the first statement that $\tau$ is affine.
Therefore, by using Proposition~\ref{p:charact-equiv-affine}, there exist a continuous $\Z[\Gamma]$-module  morphism
$\lambda \colon X_f \to X_f$ and $t \in \Fix(X_f,\alpha_f)$ such that
$\tau(x) = \lambda(x) + t$ for all $x \in X_f$.
As the ring $\Z[\Gamma]$ is commutative and
$\widehat{X_f} = \Z[\Gamma]/ \Z[\Gamma] f$ is a cyclic $\Z[\Gamma]$-module,
there is $s \in \Z[\Gamma]$ such that $\widehat{\lambda}(\chi) = s \chi$ for all $\chi \in \widehat{X_f}$.
Since $\lambda = \widehat{\widehat{\lambda}}$, setting $r \coloneqq s^* \in \Z[\Gamma]$ it follows that $\lambda(x) = s^* x = r x$,
and hence $\tau(x) = r x + t$, for all $x \in X_f$.
\end{proof}

\section{Proof of Theorem~\ref{t:main-result}}
\label{sec:proof}

\begin{proof}[Proof of Theorem~\ref{t:main-result}]
Let $\Gamma$ be a countable Abelian group and let $f \in \Z[\Gamma]$ be a weakly expansive polynomial such that
$X_f$ is connected. Also let $\tau$ be an endomorphism of $(X_f,\alpha_f)$, i.e.,
a $\Gamma$-equivariant continuous map $\tau \colon X_f \to X_f$.
We want to show that $\tau$ is surjective if and only if it is pre-injective.
 \par
By Corollary~\ref{c:endo-princ}, there exists $r \in \Z[\Gamma]$ such that
$\tau$ is affine with linear part $\lambda \colon X_f \to X_f$ given by
$\lambda(x) = r x$ for all $x \in X_f$.
Clearly $\tau$ is surjective if and only if $\lambda$ is.
As $X_f$ is compact, we know that surjectivity of $\lambda$ is equivalent to injectivity of its Pontryagin dual
$\widehat{\lambda} \colon \widehat{X_f} \to \widehat{X_f}$.
Now we observe that $\widehat{\lambda}(\chi) = r^* \chi$ for all $\chi \in \widehat{X_f}$.
\par
As $\Gamma$ is Abelian, the natural mapping
$\Phi \colon \widehat{X_f}=\Z[\Gamma]/\Z[\Gamma]f \to \Z[\Gamma]/\Z[\Gamma]f^*$ defined by
\[
\Phi(g + \Z[\Gamma]f) = g^* + \Z[\Gamma]f^*
\]
for all $g \in \Z[\Gamma]$ is a group isomorphism.
Note that
\[
\Phi(g_1(g_2 + \Z[\Gamma]f))  = \Phi(g_1g_2 + \Z[\Gamma]f) = (g_1g_2)^* + \Z[\Gamma]f^* =  g_1^*g_2^* +
\Z[\Gamma]f^* = g_1^*\Phi(g_2 + \Z[\Gamma]f)
\]
for all $g_1,g_2 \in \Z[\Gamma]$.
As a consequence, denoting by $\kappa \colon \widehat{X_f}  \to  \Delta(X_f, \alpha_f)$ the composition of $\Phi$ with the $\Z[\Gamma]$-module isomorphism $\Z[\Gamma]/\Z[\Gamma]f^* \to \Delta(X_f,\alpha_f)$ in Theorem \ref{t:we-implies}.(3),
we have the commuting diagram
\[
\begin{CD}
\widehat{X_f} @> {\chi\mapsto r^*\chi}>> \widehat{X_f}\\
@V{\kappa}VV  @VV{\kappa}V \\
\Delta(X_f, \alpha_f)    @> {x\mapsto r x}>> \Delta(X_f, \alpha_f).
\end{CD}
\]
\vspace{0.5cm}

We deduce that injectivity of $\widehat{\lambda}$ is equivalent to injectivity of the
group endomorphism $\mu$ of $\Delta(X_f,\alpha_f)$ defined by $\mu(x) \coloneqq r x$ for all $x \in \Delta(X_f,\alpha_f)$.
As $\mu$ is the restriction of $\lambda$ to $\Delta(X_f,\alpha_f)$, we conclude that surjectivity of $\tau$ is equivalent
to pre-injectivity of $\tau$, by using Proposition~\ref{p:pre-inj-affine}.
\end{proof}

From the proof of Theorem \ref{t:main-result} and Proposition \ref{p:he-implies} we immediately deduce the following:

\begin{corollary}
\label{c:homo-exp-Moore}
Let $\Gamma$ be a countable Abelian group and let $f\in \Z[\Gamma]$. Suppose that
$(X_f, \alpha_f)$ is homoclinically expansive, $\Delta(X_f, \alpha_f)$ is dense in $X_f$, and that $X_f$ is connected.
Then $(X_f, \alpha_f)$ has the Moore property.
\end{corollary}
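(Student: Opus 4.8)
The plan is to follow the proof of Theorem~\ref{t:main-result} essentially verbatim, up to the one place where full weak expansivity of $f$ was invoked — namely the identification $\Delta(X_f,\alpha_f)\cong\Z[\Gamma]/\Z[\Gamma]f^{*}$ of Theorem~\ref{t:we-implies}.(3) — and to replace that step by the weaker Proposition~\ref{p:he-implies}. Since the Moore property asserts only the implication ``surjective $\Rightarrow$ pre-injective'', a one-sided description of $\Delta(X_f,\alpha_f)$ suffices, and this is exactly what Proposition~\ref{p:he-implies} provides.

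First I would invoke Corollary~\ref{c:endo-princ}: under the present hypotheses ($(X_f,\alpha_f)$ homoclinically expansive, $\Delta(X_f,\alpha_f)$ dense in $X_f$, $X_f$ connected, $\Gamma$ Abelian), every endomorphism $\tau$ of $(X_f,\alpha_f)$ is affine, of the form $\tau(x)=rx+t$ with $r\in\Z[\Gamma]$ and $t\in\Fix(X_f,\alpha_f)$; write $\lambda(x)=rx$ for its linear part. Then $\tau$ is surjective iff $\lambda$ is, and, $X_f$ being compact, iff the dual map $\widehat{\lambda}\colon\widehat{X_f}\to\widehat{X_f}$ — which on $\widehat{X_f}=\Z[\Gamma]/\Z[\Gamma]f$ is $\chi\mapsto r^{*}\chi$ — is injective. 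Next I would transport this along the group isomorphism $\Phi\colon\Z[\Gamma]/\Z[\Gamma]f\to\Z[\Gamma]/\Z[\Gamma]f^{*}$, $g+\Z[\Gamma]f\mapsto g^{*}+\Z[\Gamma]f^{*}$, already used in the proof of Theorem~\ref{t:main-result}: since $\Phi(g_{1}\xi)=g_{1}^{*}\Phi(\xi)$, it conjugates $\chi\mapsto r^{*}\chi$ into multiplication by $r$ on $\Z[\Gamma]/\Z[\Gamma]f^{*}$, so $\tau$ is surjective iff $\xi\mapsto r\xi$ is injective on $\Z[\Gamma]/\Z[\Gamma]f^{*}$. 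Finally I would apply Proposition~\ref{p:he-implies} with $n=k=1$ and $A=f$, which supplies an injective $\Z[\Gamma]$-module morphism $\varphi\colon\Delta(X_f,\alpha_f)\hookrightarrow\Z[\Gamma]/\Z[\Gamma]f^{*}$ with $\varphi(rx)=r\varphi(x)$. If $\tau$ is surjective, multiplication by $r$ is injective on $\Z[\Gamma]/\Z[\Gamma]f^{*}$, hence on the submodule $\varphi(\Delta(X_f,\alpha_f))$, and injectivity of $\varphi$ then forces $x\mapsto rx$ to be injective on $\Delta(X_f,\alpha_f)$; i.e.\ $\lambda$ restricts to an injective map on the homoclinic group. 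By Proposition~\ref{p:pre-inj-affine}, $\tau$ is pre-injective, which is the Moore property.

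There is no genuine obstacle here — the argument is a direct specialization of the proof of Theorem~\ref{t:main-result} — but it is worth flagging precisely why the same reasoning does \emph{not} deliver the Myhill property, and hence the full Garden of Eden theorem, under these weaker hypotheses. In Theorem~\ref{t:main-result} the comparison map $\varphi$ was an \emph{isomorphism} (Theorem~\ref{t:we-implies}.(3)), so injectivity of multiplication by $r$ on $\Delta(X_f,\alpha_f)$ propagated back to all of $\Z[\Gamma]/\Z[\Gamma]f^{*}$, hence to surjectivity of $\tau$. Here Proposition~\ref{p:he-implies} only yields an embedding of $\Delta(X_f,\alpha_f)$ into $\Z[\Gamma]/\Z[\Gamma]f^{*}$, so the chain of equivalences degenerates into the single implication needed for Moore.
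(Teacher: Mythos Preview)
Your proposal is correct and matches the paper's own approach exactly: the paper derives Corollary~\ref{c:homo-exp-Moore} by stating that it follows immediately from the proof of Theorem~\ref{t:main-result} together with Proposition~\ref{p:he-implies}, which is precisely the substitution you describe. Your closing remark on why only the Moore direction survives is also the right diagnosis.
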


\section{Atoral polynomials and proof of Theorem \ref{t:GOE-irr-finite-zero-set}}
\label{s:atoral}
Lind, Schmidt, and Verbistkiy \cite[Theorem 3.2]{lind-schmidt-verbitskiy-2} gave the following geometric-dynamical characterization of atorality for irreducible polynomials in $\Z[\Z^d]$.
\begin{theorem}
\label{t:LSV}
Let $\Gamma = \Z^d$ and suppose that $f \in \Z[\Gamma]$ is irreducible. Then
the following conditions are equivalent:
\begin{enumerate}[{\rm (a)}]
\item $\Delta^1(X_f, \alpha_f)\neq \{0_{X_f}\}$;
\item $\Delta^1(X_f, \alpha_f)$ is dense in $X_f$;
\item $f$ is \emph{atoral} in the sense that there is some $g\in \Z[\Gamma]$ such that $g\not \in \Z[\Gamma] f$ and $Z(f)\subset Z(g)$;
\item $\dim Z(f)\le d-2$.
\end{enumerate}
\end{theorem}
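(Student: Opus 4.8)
The plan is to close the logical cycle $(a)\Rightarrow(c)\Rightarrow(d)\Rightarrow(b)\Rightarrow(a)$, treating $(c)\Leftrightarrow(d)$ as the algebraic-geometric core and $(d)\Rightarrow(b)$ as the analytic heart of the statement. Since $f$ is irreducible it is not a unit, so $M_f=\Z[\Gamma]/\Z[\Gamma]f$ is infinite and $X_f\neq\{0_{X_f}\}$; hence $(b)\Rightarrow(a)$ is immediate. Throughout I identify $\ell^1(\Gamma)$ with the Wiener algebra $A(\T^d)$ of functions on $\widehat\Gamma=\T^d$ with absolutely convergent Fourier series, under which convolution by $f^*$ becomes multiplication by the trigonometric polynomial $\widehat{f^*}$; its zero set is $Z(f^*)=Z(f)$, since $f$ has real coefficients and on $\T^d$ one has $\overline{f(z)}=f(z^{-1})=f^*(z)$.

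For $(a)\Rightarrow(c)$ I would argue as follows. A nonzero $x\in\Delta^1(X_f,\alpha_f)$ lifts to some $g\in\ell^1(\Gamma)$ with $\pi(g)=x$ (the $\ell^1$ analogue of Lemma~\ref{l:char-homoclinic-shift}), and by Proposition~\ref{p:X-A-carac-lift} the product $p:=gf^*$ lies in $\ell^1(\Gamma)\cap\ell^\infty(\Gamma,\Z)=\Z[\Gamma]$. Since $x\neq0$ we have $g\notin\Z[\Gamma]$. The map $w\mapsto wf^*$ is injective on $\ell^1(\Gamma)$: if $wf^*=0$ then $\widehat w\,\widehat{f^*}\equiv0$ with $\widehat{f^*}$ a nonzero continuous function, forcing $\widehat w\equiv0$. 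Hence $p\notin\Z[\Gamma]f^*$, for $p=f^*h$ with $h\in\Z[\Gamma]$ would give $(g-h)f^*=0$, whence $g=h\in\Z[\Gamma]$. Finally $\widehat p=\widehat g\,\widehat{f^*}$ vanishes on $Z(f^*)=Z(f)$, so $Z(f)\subseteq Z(p)$. Thus $f^*$, and on taking adjoints $f$ itself (via $p^*\notin\Z[\Gamma]f$ and $Z(f)\subseteq Z(p^*)$), is atoral.

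The equivalence $(c)\Leftrightarrow(d)$ is where algebraic geometry enters. For $(c)\Rightarrow(d)$, an atoral witness $g\notin\Z[\Gamma]f$ is coprime to the irreducible $f$ in the factorial ring $\C[\Z^d]$, so the complex variety $V_\C(f)\cap V_\C(g)$ has codimension at least $2$, i.e.\ complex dimension $\le d-2$; since $Z(f)=V_\C(f)\cap\T^d\subseteq V_\C(f)\cap V_\C(g)$ and $\T^d$ is a maximal totally real submanifold of $(\C^*)^d$, intersecting a complex-analytic set with $\T^d$ cannot raise the dimension, so $\dim Z(f)\le d-2$. For $(d)\Rightarrow(c)$ one shows that when $\dim Z(f)\le d-2$ the thin set $Z(f)$ cannot be Zariski dense in the $(d-1)$-dimensional variety $V_\C(f)$, hence lies in a proper subvariety cut out by some $g\notin\Z[\Gamma]f$ with $Z(f)\subseteq Z(g)$; alternatively, $(d)\Rightarrow(c)$ comes for free from the cycle below.

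The crux is $(d)\Rightarrow(b)$: from $\dim Z(f)\le d-2$ I must produce a family of summable homoclinic points dense in $X_f$. The construction assigns to each $p\in\Z[\Gamma]$ with $\widehat p/\widehat{f^*}\in A(\T^d)$ the point $x_p:=\pi(w_p)$, where $w_p\in\ell^1(\Gamma)$ is the inverse Fourier transform of $\widehat p/\widehat{f^*}$; then $w_pf^*=p\in\Z[\Gamma]$, so Proposition~\ref{p:X-A-carac-lift} and Lemma~\ref{l:char-homoclinic-shift} give $x_p\in\Delta^1(X_f,\alpha_f)$. The set of such good $p$ is an ideal $G$ of $\Z[\Gamma]$ because $A(\T^d)$ is an algebra, and the main obstacle is a division lemma in the Wiener algebra: when $\widehat p$ vanishes to sufficient order along the codimension-$\ge2$ set $Z(f)$, the quotient $\widehat p/\widehat{f^*}$ has absolutely summable Fourier coefficients. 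Here the hypothesis $\dim Z(f)\le d-2$ is essential, making the local singularity of $1/\widehat{f^*}$ integrable enough for the summability estimate, whereas for $\dim Z(f)=d-1$ no nonzero multiple of $1/\widehat{f^*}$ is summable; this quantitative control near the zero set is the hardest part. Granting it, $G\supsetneq\Z[\Gamma]f^*$, and to promote these points to a dense subgroup I would compute the pairing $\langle q+\Z[\Gamma]f,\,x_p\rangle$ as in the proof of Theorem~\ref{t:we-implies}.(2) and show, using that $(f)$ is prime and $\Z[\Gamma]/\Z[\Gamma]f$ is a domain, that the annihilator of $\{x_p:p\in G\}$ in $M_f$ is $\{0\}$; by Pontryagin duality this yields density of $\Delta^1(X_f,\alpha_f)$, closing the cycle.
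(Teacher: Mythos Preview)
The paper does not supply a proof of this theorem at all: it is quoted verbatim as \cite[Theorem~3.2]{lind-schmidt-verbitskiy-2} and used as a black box in Section~\ref{s:atoral}. So there is no ``paper's own proof'' to compare against; what you have sketched is essentially an outline of the Lind--Schmidt--Verbitskiy argument itself.

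As an outline your cycle $(a)\Rightarrow(c)\Rightarrow(d)\Rightarrow(b)\Rightarrow(a)$ is the right shape, and your $(a)\Rightarrow(c)$ step is clean and complete. However two places remain genuine gaps rather than routine details. First, in $(c)\Rightarrow(d)$ you invoke that intersecting a complex-analytic set of complex dimension $\le d-2$ with the totally real torus $\T^d$ yields a set of real dimension $\le d-2$; this is true but not automatic (the real dimension of the complex set is $2(d-2)$, so you are really using a nontrivial property of totally real intersections, and one must also check that the notion of $\dim$ used in \cite{lind-schmidt-verbitskiy-2} agrees with yours). Second, and more seriously, in $(d)\Rightarrow(b)$ you explicitly ``grant'' the Wiener-algebra division lemma---that sufficiently high-order vanishing of $\widehat p$ on the codimension-$\ge 2$ set $Z(f)$ forces $\widehat p/\widehat{f^*}\in A(\T^d)$. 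This is precisely the analytic core of \cite{lind-schmidt-verbitskiy-2}, requiring delicate local estimates near $Z(f)$ and a \L{}ojasiewicz-type inequality; without it you have identified the target but not hit it. Your density argument at the end, via the annihilator in the domain $M_f$, is correct once the ideal $G$ is shown to strictly contain $\Z[\Gamma]f^*$.
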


The meaning of $\dim(\cdot)$ in Theorem \ref{t:LSV}.(d) is explained in \cite[page 1063]{lind-schmidt-verbitskiy-2}; in particular,
one has $\dim(\varnothing) \coloneqq - \infty$.
Also remark that, if $d = 1$, an irreducible polynomial $f \in \Z[\Z] = \Z[u_1,u_1^{-1}]$ is atoral if and only if $Z(f) = \varnothing$
and this, in turn, is equivalent to $(X_f,\alpha_f)$ being expansive (cf.\ \cite[Lemmma 2.1.(1)]{lind-schmidt}).

\begin{examples}
Here below, we present some examples of irreducible atoral polynomials $f \in \Z[\Z^d]$, mainly from  \cite[Section 3]{lind-schmidt-verbitskiy} and \cite[Section 4]{lind-schmidt-verbitskiy-2}. We can then apply Theorem \ref{t:GOE-irr-finite-zero-set} and deduce
that the corresponding algebraic dynamical systems $(X_f, \alpha_f)$ satisfy the Garden of Eden theorem.

\begin{enumerate}[{\rm (1)}]
\item Let $d=1$ and $f(u) = u^2 - u - 1 \in \Z[u,u^{-1}] = \Z[\Z]$. Then $f$ is irreducible and, since $Z(f) = \varnothing$, atoral.
The associated principal algebraic dynamical system $(X_f,\alpha_f)$ is conjugated to the hyperbolic dynamical system
$(\T^2,\varphi_A)$, where $A = \begin{pmatrix} 0 & 1\\1 & 1 \end{pmatrix} \in \GL(2,\Z)$ is \emph{Arnold's cat} and
$\varphi_A \colon \T^2 \to \T^2$ is the associated hyperbolic automorphism of the two-dimensional torus.
Note that $(X_f,\alpha_f)$ is expansive so that we can deduce the Moore-Myhill property also from Corollary \ref{c:expansive} (this is a particular case of the Garden of Eden theorem for Anosov diffeomorphisms on tori~\cite{csc-ijm-2015}, we alluded to in the Introduction).

\item Let $d = 2$ and $f(u_1,u_2) = 2 - u_1 - u_2 \in \Z[u_1,u_1^{-1},u_2,u_2^{-1}] = \Z[\Z^2]$. Then
$Z(f) = \{(1,1)\}$, and so $f$ is atoral.
Thus, by Example \ref{e:examples-HE}.(2), $(X_f, \alpha_f)$ is homoclinically expansive.
Also, it follows from \cite[\S 5]{lind-schmidt-verbitskiy} and \cite[Example 4.3]{lind-schmidt-verbitskiy-2})
that there is some $\omega$ in $\CC_0(\Z^2)$ with $f\omega=\omega f=1_\Gamma$.
As a consequence, $f$ is weakly expansive (though not well-balanced).
Moreover, $f$ is also primitive, so that, by Proposition \ref{p:Xf-connectedness}, $X_f$ is connected.
Applying Theorem \ref{t:main-result}, we obtain an alternative proof of the fact that $(X_f,\alpha_f)$ has the Moore-Myhill property.

\item Let $d=2$, and consider the harmonic model $f(u_1,u_2) = 4 - u_1-u_1^{-1}-u_2-u_2^{-1} \in \Z[u_1,u_1^{-1},u_2,u_2^{-1}] = \Z[\Z^2]$. One has $Z(f)=\{(1, 1)\}$. Thus $f$ is atoral and $(X_f, \alpha_f)$ satisfies the Garden of Eden theorem, by virtue of Theorem \ref{t:GOE-irr-finite-zero-set}. (Note that we cannot apply Theorem \ref{t:main-result}.)

\item Let $d=2$, and $f(u_1,u_2) = 1 + u_1 + u_2 \in \Z[u_1,u_1^{-1},u_2,u_2^{-1}] = \Z[\Z^2]$. Then $Z(f)=\{(\omega, \omega^2),
(\omega^2, \omega)\}$, where $\omega = \exp(2\pi i/3)$. The algebraic dynamical system $(X_f, \alpha_f)$ is called the \emph{connected Ledrappier shift}.

\item Let $d=2$, and $f(u_1,u_2) = 2 - u_1^2 + u_2 -u_1u_2 \in \Z[u_1,u_1^{-1},u_2,u_2^{-1}] = \Z[\Z^2]$. One has $Z(f)=\{(\xi,\eta),
(\overline{\xi},\overline{\eta})\}$, where $\xi, \eta$ are  algebraic numbers.

\item Let $d=2$, and $f(u_1,u_2) = 2 - u_1^3 + u_2 -u_1u_2 - u_1^2u_2 \in \Z[u_1,u_1^{-1},u_2,u_2^{-1}] = \Z[\Z^2]$. Here $Z(f)=\{(1,1), (i, \zeta), (-i, \overline{\zeta}), (\xi,\eta), (\overline{\xi},\overline{\eta})\}$, where $\zeta, \xi, \eta$ are algebraic numbers.

\item Let $d=3$ and $f(u_1,u_2,u_3) = 1 + u_1 + u_2 + u_3 \in \Z[u_1,u_1^{-1},u_2,u_2^{-1}, u_3,u_3^{-1}] = \Z[\Z^3]$. The corresponding zero-set $Z(f)$ is the union of three circles, namely, $\{(-1,s,-s): s \in \T\}$, $\{(s,-1,-s): s \in \T\}$,
and $\{(s,-s,-1): s \in \T\}$. Hence, $f$ is atoral.

\item Let $d=3$ and $f(u_1,u_2,u_3) = 3 + 3u_1 - 3 u_1^3 + u_1^4 - u_2 - u_3  \in \Z[u_1,u_1^{-1},u_2,u_2^{-1}, u_3,u_3^{-1}] = \Z[\Z^3]$.
One has has $Z(f)=\{(\eta, \overline{\eta}, \overline{\eta}), (\overline{\eta},\eta,\eta)\}$, where $\eta$ is an algebraic integer.
\end{enumerate}
\end{examples}

We are now in position to prove Theorem \ref{t:GOE-irr-finite-zero-set}.

\begin{proof}[Proof of Theorem \ref{t:GOE-irr-finite-zero-set}]
We first note that, by Example \ref{e:examples-HE}.(2), $\alpha_f$ is homoclinically expansive. Moreover, by Theorem \ref{t:LSV},
$\Delta^1(X_f,\alpha_f)$ and therefore $\Delta(X_f,\alpha_f)$ are dense in $X_f$. Finally, since $f$ is irreducible it is primitive
and hence (cf.\ Proposition \ref{p:Xf-connectedness}) $X_f$ is connected. Thus we may apply Corollary~\ref{c:endo-princ}
and deduce that $(X_f, \alpha_f)$ is topologically rigid.\\
\noindent
(a)$\Rightarrow$(b) is given by Corollary \ref{c:homo-exp-Moore}.\\
(b)$\Rightarrow$(c) follows from $\Delta^p(X_f, \alpha_f)\subset \Delta(X_f, \alpha_f)$ for all $1 \leq p \leq \infty$
(cf.\ Theorem \ref{t:p-homo-group}.(1)). \\
(c)$\Rightarrow$(d) this is trivial.\\
(d)$\Rightarrow$(a). Let $\tau \colon X_f \to X_f$ be a $\Gamma$-equivariant $1$-pre-injective continuous map.
By topological rigidity, there exists $r\in \Z[\Gamma]$ and $t \in X_f$ such that the group endomorphism $\lambda$ of $X_f$
defined by setting $\lambda(x) \coloneqq rx$ for all $x \in X_f$ satisfies that $\tau(x) = \lambda(x) + t$ for all $x \in X_f$.
Then the dual map $\widehat{\lambda} \colon \widehat{X_f} \to \widehat{X_f}$ is given by $\widehat{\lambda}(\chi) = r^*\chi$
for all $\chi \in \widehat{X_f}$.
Set $\textbf{m}_f=\Z[\Gamma]\cap (f \ell^1(\Gamma))$, which is an ideal of $\Z[\Gamma]$. For each $g\in \textbf{m}_f$, one has
$g=fv_g $ for a unique $v_g\in \ell^1(\Gamma)$. Then $x^{(g)} \coloneqq \pi(v_g^*)$ lies in $\Delta^1(X_f, \alpha_f)$, where $\pi$ is, as usual,  the projection map $\ell^\infty(\Gamma)\to \T^\Gamma$.
The map $g+\Z[\Gamma]f \mapsto x^{(g)}$ is clearly a group isomorphism from $\textbf{m}_f/\Z[\Gamma]f$ onto $\Delta^1(X_f, \alpha_f)$
(cf.\ \cite[Corollary 3.3]{lind-schmidt-verbitskiy-2}).
Take $g\in \textbf{m}_f\setminus \Z[\Gamma]f$. Since $f$ is irreducible, the group homomorphism $h+\Z[\Gamma]f\mapsto hg+\Z[\Gamma]f$
from $\widehat{X_f}$ to $\textbf{m}_f/\Z[\Gamma]f$ is injective. Thus the group homomorphism
$\overline{\kappa} \colon \widehat{X_f} \to \Delta^1(X_f, \alpha_f)$ defined by $\overline{\kappa}(h+\Z[\Gamma]f) \coloneqq x^{(hg)}$ is injective.
We have the commutative diagram:
\[
\begin{CD}
\widehat{X_f} @> {\chi\mapsto r^*\chi} >>  \widehat{X_f} \\
@V{\overline{\kappa}}VV  @VV{\overline{\kappa}}V \\
\Delta^1(X_f, \alpha_f)  @> {x\mapsto r x}>> \Delta^1(X_f, \alpha_f).
\end{CD}
\]

Since $\tau$ is $1$-pre-injective, so is $\lambda$, that is, $\lambda$ is
injective on $\Delta^1(X_f, \alpha_f)$. Hence the dual map $\widehat{\lambda}$ is injective. By Pontryaging duality, this is equivalent
to $\lambda$ being surjective. It follows that $\tau$ is surjective as well.
\end{proof}

\section{Concluding remarks}

\subsection{Surjunctivity}
A dynamical system $(X,\alpha)$ is called \emph{surjunctive} if every injective endomorphism of
$(X,\alpha)$ is surjective (and hence a homeomorphism).
As injectivity implies pre-injectivity, we deduce from Theorem~\ref{t:main-result}
that if $\Gamma$ is a countable Abelian group, $f \in \Z[\Gamma]$ is weakly expansive
and $X_f$ is connected, then the dynamical system  $(X_f,\alpha_f)$ is surjunctive.
Actually, in the case $\Gamma = \Z^d$, this last result is a particular case
of Theorem~1.5 in~\cite{bcsc-surjunctivity}
which asserts that if $\Gamma = \Z^d$ and $M$ is a finitely generated $\Z[\Gamma]$-module, then
$(\widehat{M},\alpha_M)$ is surjunctive.

\subsection{Counterexamples for mixing algebraic dynamical systems}
Recall (cf.\ Corollary \ref{c:mixing}) that if $\Gamma$ is infinite and $f \in \Z[\Gamma]$ is weakly expansive,
then the associated algebraic dynamical system $(X_f,\alpha_f)$ is mixing of all orders.
\par
The examples below show that Theorem~\ref{t:main-result} becomes false if the hypothesis that $f \in \Z[\Gamma]$ is weakly expansive is replaced by the weaker hypotheses that the homoclinic group $\Delta(X_f, \alpha_f)$ is dense and that $(X_f, \alpha_f)$ is mixing.

\begin{example}
Let $\T = \R/\Z$, $\Gamma = \Z^d$, $d \geq 1$,
and consider the $\Gamma$-shift $(\T^\Gamma,\sigma)$
(this is  $(X_f,\alpha_f)$ for $f = 0 \in \Z[\Gamma]$).
Then the endomorphism $\tau$ of $(\T^\Gamma,\sigma)$ defined by
$\tau(x)(\gamma) = 2x(\gamma)$ for all $x \in \T^\Gamma$ and $\gamma \in \Gamma$,
is clearly surjective.
However, $\tau$ is not pre-injective since the configuration $y \in \T^\Gamma$, defined by
$y(\gamma) = 1/2 \mod 1$ if $\gamma = 1_\Gamma$ and $0$ otherwise, is a non-trivial element in the homoclinic group
of $(\T^\Gamma,\sigma)$ and satisfies
$\tau(y) = \tau(0_{\T^\Gamma}) = 0_{\T^\Gamma}$.
It follows that $(\T^\Gamma,\sigma)$ does not have the Moore property.
\end{example}

\begin{example}
Let $\Gamma = \Z$ and consider the  polynomial
\[
f = 1 -2u_1 + u_1^2 - 2u_1^3 + u_1^4 \in  \Z[u_1, u_1^{-1}] = \Z[\Gamma].
\]
The associated algebraic dynamical system $(X_f,\alpha_f)$ is conjugate
to the system $(\T^4,\beta)$,
where $\beta$ is the action of $\Z$ on $\T^4$ generated by the companion matrix of $f$.
It is mixing since $f$ is not divisible by a cyclotomic polynomial
(cf.~\cite[Theorem~6.5.(2)]{schmidt-book}).
On the other hand,  $f$  has four distinct roots in $\C$,
two   on the unit circle, one inside and one outside.
As $f$ is irreducible over $\Q$, it follows that   
  the homoclinic group $\Delta(X_f,\alpha_f)$  is reduced to $0$
  (cf.~\cite[Example 3.4]{lind-schmidt}).
The trivial endomorphism of $(X_f,\alpha_f)$, that maps each $x \in X_f$ to $0$,
is pre-injective but not surjective.
Consequently, $(X_f,\alpha_f)$ does not have the Myhill property.
However,  $(X_f,\alpha_f)$ has the Moore property
since  each homoclinicity class of $(X_f,\alpha_f)$ is reduced to a single point,
so that every map with source set $X_f$  is pre-injective.
Note  that $(X_f,\alpha_f) = (\T^4,\beta)$ is topologically rigid since
every mixing toral automorphism is topologically rigid by
a result of Walters~\cite{walters}.
\end{example}

\subsection{$p$-pre-injectivity and the $p$-Moore and $p$-Myhill properties}
In Subsection \ref{s:p-exp-and-p-homoclinic} we have recalled from \cite{Chung-Li} the notions and the main properties
of $p$-expansivity and of $p$-homoclinic group (denoted $\Delta^p(X,\alpha)$) for an algebraic dynamical system $(X,\alpha)$.
In the Introduction we have also defined the notion of a $p$-pre-injective map $\tau \colon X \to X$.

Note that, if $\tau$ is a group endomorphism of $X$, then (cf.\ Proposition \ref{p:pre-inj-affine}):
\begin{equation}
\label{e:p-pre-inj-endo}
\mbox{$\tau$ is $p$-pre-injective if and only if
$\ker(\tau) \cap \Delta^p(X,\alpha) = \{0_X\}$.}
\end{equation}
\par
We shall then say that the algebraic dynamical system $(X,\alpha)$ has the \emph{$p$-Moore property}
if every surjective endomorphism of $(X,\alpha)$ is $p$-pre-injective and that it has the \emph{$p$-Myhill property} if
every $p$-pre-injective endomorphism of $(X,\alpha)$ is surjective. Note that the $\infty$-Moore property (resp.\
$\infty$-Myhill property) is nothing but the Moore property (resp.\ Myhill property).
\par
It follows immediately from Theorem \ref{t:p-homo-group}.(1) that if $1 \leq p \leq q \leq \infty$ the every
$q$-pre-injective map $\tau \colon X \to X$ is $p$-pre-injective, so that
every algebraic dynamical system satisfying the $q$-Moore property (resp.\ the $p$-Myhill property) satisfies the
$p$-Moore property (resp.\ the $q$-Myhill property).

From the proof of Theorem \ref{t:main-result} and from Theorem \ref{t:p-homo-group}.(\ref{item:sub})
we immediately deduce the following:

\begin{corollary}
\label{c:p-Moore}
Let $\Gamma$ be a countable Abelian group and let $f\in \Z[\Gamma]$. Let also $1\le p<\infty$.
Suppose that $(X_f, \alpha_f)$ is $p$-expansive and topologically rigid.
Then $(X_f, \alpha_f)$ has the $p$-Moore property.
\end{corollary}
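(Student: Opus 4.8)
The plan is to rerun the proof of Theorem~\ref{t:main-result}, keeping only the implication ``surjective $\Rightarrow$ $p$-pre-injective'' that the $p$-Moore property requires. So let $\tau$ be a surjective endomorphism of $(X_f,\alpha_f)$; we must show that $\tau$ is $p$-pre-injective. Since $(X_f,\alpha_f)$ is topologically rigid, $\tau$ is affine; and since $\Gamma$ is Abelian and $\widehat{X_f}=\Z[\Gamma]/\Z[\Gamma]f$ is a cyclic $\Z[\Gamma]$-module, the argument in the second part of the proof of Corollary~\ref{c:endo-princ} — which uses only affineness of $\tau$, commutativity of $\Gamma$, and cyclicity of $\widehat{X_f}$ — goes through unchanged and produces $r\in\Z[\Gamma]$ and $t\in\Fix(X_f,\alpha_f)$ with $\tau(x)=rx+t$ for all $x\in X_f$. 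Thus the linear part of $\tau$ is $\lambda(x)=rx$, whose Pontryagin dual is $\widehat\lambda(\chi)=r^*\chi$ on $\widehat{X_f}$.

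Next I would restate the two relevant properties as injectivity statements. On the one hand, $\tau$ is surjective iff $\lambda$ is, and, $X_f$ being compact, iff $\widehat\lambda$ is injective, i.e.\ iff multiplication by $r^*$ on $\Z[\Gamma]/\Z[\Gamma]f$ is injective. On the other hand, by~\eqref{e:p-pre-inj-endo} applied to the linear part $\lambda$ (which has the same fibres as $\tau$, up to the translation $t$), $\tau$ is $p$-pre-injective iff $\ker(\lambda)\cap\Delta^p(X_f,\alpha_f)=\{0_{X_f}\}$, i.e.\ iff multiplication by $r$ is injective on $\Delta^p(X_f,\alpha_f)$. It therefore remains to deduce the second injectivity from the first.

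For this I would invoke Theorem~\ref{t:p-homo-group}.(\ref{item:sub}) with $n=k=1$ and $A=f$: as $\alpha_f$ is $p$-expansive and $p<\infty$, there is a $\Z[\Gamma]$-module isomorphism of $\Delta^p(X_f,\alpha_f)$ onto a $\Z[\Gamma]$-submodule $N$ of $\Z[\Gamma]/\Z[\Gamma]f^*$, under which multiplication by $r$ on $\Delta^p(X_f,\alpha_f)$ becomes multiplication by $r$ on $N$. Exactly as in the proof of Theorem~\ref{t:main-result}, the twisting map $\Phi\colon\Z[\Gamma]/\Z[\Gamma]f\to\Z[\Gamma]/\Z[\Gamma]f^*$, $g+\Z[\Gamma]f\mapsto g^*+\Z[\Gamma]f^*$, is a group isomorphism with $\Phi(cg)=c^*\Phi(g)$ for all $c\in\Z[\Gamma]$; hence $N'\coloneqq\Phi^{-1}(N)$ is a $\Z[\Gamma]$-submodule of $\Z[\Gamma]/\Z[\Gamma]f$, and $\Phi$ intertwines multiplication by $r^*$ on $N'$ with multiplication by $r$ on $N$. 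Since multiplication by $r^*$ is injective on all of $\Z[\Gamma]/\Z[\Gamma]f$, its restriction to $N'$ is a fortiori injective, so multiplication by $r$ is injective on $N\cong\Delta^p(X_f,\alpha_f)$, and therefore $\tau$ is $p$-pre-injective.

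I do not expect a real obstacle here: the only delicate point is tracking the $\Z[\Gamma]$-module structures through the involution-twisted isomorphism $\Phi$, which is the same bookkeeping already carried out in the proof of Theorem~\ref{t:main-result}. What makes the present statement strictly easier is that a single direction is needed, so one never has to know that $\Delta^p(X_f,\alpha_f)$ exhausts $\Z[\Gamma]/\Z[\Gamma]f^*$ — that would require the invertibility hypothesis in the second half of Theorem~\ref{t:p-homo-group}.(\ref{item:sub}) and is precisely what a matching $p$-Myhill statement would demand.
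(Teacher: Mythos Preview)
Your proposal is correct and follows exactly the route the paper intends: rerun the proof of Theorem~\ref{t:main-result}, replacing the isomorphism $\Delta(X_f,\alpha_f)\cong\Z[\Gamma]/\Z[\Gamma]f^*$ from Theorem~\ref{t:we-implies}.(3) by the submodule embedding $\Delta^p(X_f,\alpha_f)\hookrightarrow\Z[\Gamma]/\Z[\Gamma]f^*$ from Theorem~\ref{t:p-homo-group}.(\ref{item:sub}), which suffices for the one-sided (Moore) implication. Your observation that only the embedding---not the full isomorphism---is needed here, and that the latter would be required for a matching $p$-Myhill statement, is exactly the point.
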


In the following result we relax the commutativity condition for the acting group $\Gamma$.

\begin{theorem}
Let $(X,\alpha)$ be a finitely generated algebraic dynamical system.
Suppose that $\Gamma$ is amenable and that $(X, \alpha)$ is topologically rigid
and has finite entropy. Then $(X, \alpha)$ has the $1$-Moore property.
\end{theorem}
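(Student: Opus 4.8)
The plan is to follow the scheme of the proof of Theorem~\ref{t:main-result} and of Corollary~\ref{c:p-Moore}: use topological rigidity to linearise $\tau$, use Pontryagin duality to turn surjectivity into an injectivity statement, and compare the dual map with its restriction to the relevant homoclinic group. The new ingredient is that the role played by $p$-expansivity in Corollary~\ref{c:p-Moore} will now be played by finiteness of the entropy, together with the addition formula for the entropy of algebraic actions of countable amenable groups.

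So let $\tau$ be a surjective endomorphism of $(X,\alpha)$. By topological rigidity, $\tau=\lambda+t$ with $\lambda\colon X\to X$ a continuous group endomorphism and $t\in X$; by Proposition~\ref{p:charact-equiv-affine}, $\lambda$ is $\Gamma$-equivariant and $t\in\Fix(X,\alpha)$, and since translations are bijective, $\lambda$ is surjective and $\tau$ is $1$-pre-injective if and only if $\lambda$ is. By \eqref{e:p-pre-inj-endo} (applied to the group endomorphism $\lambda$, with $p=1$), this amounts to $\Ker(\lambda)\cap\Delta^1(X,\alpha)=\{0_X\}$, which is what we must prove. Write $M=\widehat X$, a finitely generated $\Z[\Gamma]$-module. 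As $X$ is compact, $\lambda$ surjective is equivalent to $\widehat\lambda\colon M\to M$ injective, so $\widehat\lambda(M)\cong M$ as $\Z[\Gamma]$-modules, the quotient $Q\coloneqq M/\widehat\lambda(M)$ is again finitely generated, and $\Ker(\lambda)=(\widehat\lambda(M))^{\perp}$ is, with the induced $\Gamma$-action, the algebraic dynamical system $(\widehat Q,\alpha_Q)$. A direct check, using that the restriction map $\widehat X\to\widehat{\Ker(\lambda)}$ is onto (because $\Ker(\lambda)$ is a closed subgroup of the compact group $X$), shows that $\Delta^1(X,\alpha)\cap\Ker(\lambda)=\Delta^1(\Ker(\lambda),\alpha|)$. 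Hence it suffices to prove that $\Delta^1(\widehat Q,\alpha_Q)=\{0\}$.

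Here the hypotheses on $\Gamma$ and on the entropy enter: since $\Gamma$ is amenable and $X/\Ker(\lambda)\cong X$ as $\Gamma$-systems, the Yuzvinskii-type addition formula for the topological entropy of algebraic actions of countable amenable groups, applied to the short exact sequence $0\to\Ker(\lambda)\to X\to X/\Ker(\lambda)\to 0$, gives $h(X,\alpha)=h(\Ker(\lambda),\alpha|)+h(X,\alpha)$. As $h(X,\alpha)<\infty$, we conclude that $h(\widehat Q,\alpha_Q)=h(\Ker(\lambda),\alpha|)=0$. (Without the entropy hypothesis one would only get that the mean rank of $Q$ is $0$, which, as the harmonic model on $\Z^2$ shows, is not enough to force the $1$-homoclinic group to vanish — so finiteness of entropy is genuinely used.)

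What remains is the statement that a finitely generated algebraic dynamical system over a countable amenable group with zero entropy has trivial $1$-homoclinic group, and this is the main obstacle. I would argue by contradiction: if $0_X\neq x\in\Delta^1(\widehat Q,\alpha_Q)$, then the closure $Y$ of the cyclic $\Z[\Gamma]$-submodule generated by $x$ is a nontrivial closed $\Gamma$-invariant subgroup, hence a subsystem of $(\widehat Q,\alpha_Q)$ with finitely generated dual, with dense $1$-homoclinic group, and with $h(Y,\cdot)=0$; one then invokes the theory of summable homoclinic points for algebraic actions of amenable groups (Chung--Li, together with the determinantal formula for entropy of Li) to the effect that a nontrivial algebraic action with finitely generated dual and dense $\ell^1$-homoclinic group has strictly positive entropy — essentially because its entropy equals the logarithm of a Fuglede--Kadison determinant that exceeds $1$ away from the trivial case (cf.\ Theorems~\ref{t:p-exp} and~\ref{t:p-homo-group}) — a contradiction. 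This is precisely the point at which finiteness of entropy substitutes for the $p$-expansivity of Corollary~\ref{c:p-Moore}; and since $\Delta^1\subseteq\Delta^p$ for all $p$, the argument yields only the $1$-Moore property, not the full Moore property.
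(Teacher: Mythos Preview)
Your proof is correct and follows essentially the same route as the paper: linearise via topological rigidity, apply the entropy addition formula to get $h(\Ker(\lambda))=0$, identify $\Delta^1(X,\alpha)\cap\Ker(\lambda)$ with $\Delta^1(\Ker(\lambda),\alpha|)$, and then invoke the Chung--Li theory to conclude that a finitely generated algebraic action with nontrivial $1$-homoclinic group must have positive entropy. The only difference is that the paper cites this last fact directly for $\Ker(\lambda)$ (via \cite[Theorem~7.3 and Propositions~5.7,~7.10]{Chung-Li}), whereas you pass to the closure $Y$ of $\Z[\Gamma]x$ to arrange a dense $1$-homoclinic group first; this detour is harmless but unnecessary, since the cited result already applies as soon as $\Delta^1$ is merely nontrivial.
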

\begin{proof}
Let $\tau \colon X \to X$ be a continuous $\Gamma$-equivariant surjective map.
By topological rigidity, we can find a continuous group endomorphism $\lambda \colon X \to X$
and $t \in X$ such that $\tau(x) = \lambda(x) + t$ for all $x \in X$. Since $\tau$ is
surjective (resp.\ injective) if and only if $\lambda$ is surjective (resp.\ injective), it
is not restrictive to suppose that $t = 0$, that is, $\tau$ is a group endomorphism of $X$.
Then the \emph{entropy addition formula} (cf.\ \cite[Corollary 6.3]{Li:annals}, see also \cite[Theorem 13.48]{kerr-li}) yields
(we denote by $h(\cdot)$ topological entropy)
\[
h(\ker(\tau)) = h(X) - h(X/\ker(\tau)) = h(X) - h(\tau(X)) = 0,
\]
since $\tau$ is surjective and $X$ has finite entropy. For any finitely generated algebraic dynamical system $(Y, \beta)$ with acting group $\Gamma$, if $\Delta^1(Y, \beta)$ is nontrivial, then $h(Y)>0$ (cf.\  \cite[Theorem 7.3 and Propositions 5.7 and 7.10]{Chung-Li}, see also \cite[Theorems 13.23 and 13.35]{kerr-li}). Since $\widehat{X}$ is a finitely generated left $\Z[\Gamma]$-module and $\widehat{\ker(\tau)}$ is  a quotient  $\Z[\Gamma]$-module  of  $\widehat{X}$, $\widehat{\ker(\tau)}$ is also a finitely generated left $\Z[\Gamma]$-module.
It follows that 
$$\Delta^1(X, \alpha)\cap \ker(\tau)=\Delta^1(\ker(\tau), \alpha|_{\ker(\tau)})=\{0_X\}.$$ 
This, together with \eqref{e:p-pre-inj-endo}, shows that $\tau$ is $1$-pre-injective.
\end{proof}

 \bibliographystyle{siam}

\begin{thebibliography}{10}




\bibitem{bhattacharya}
{\sc S.~Bhattacharya}, {\em Orbit equivalence and topological conjugacy of
  affine actions on compact Abelian groups}, Monatsh. Math.\ 129 (2000),
  pp.~89--96.
	
\bibitem{bhattacharya2}
{\sc S.~Bhattacharya}, {\em Expansiveness of algebraic actions on connected groups}, Trans.\ Am.\ Math.\ Soc. 356 (12) (2004),
pp.~4687--4700.

\bibitem{bcsc-surjunctivity}
{\sc S.~Bhattacharya, T.~Ceccherini-Silberstein, and M.~Coornaert}, {\em
  Surjunctivity and topological rigidity of algebraic dynamical systems},
  arXiv:1702.06201, to appear in {E}rgodic {T}heory and {D}ynamical {S}ystems.
	
\bibitem{Bowen-Li}
{\sc L.~Bowen and H.~Li}, {\em Harmonic models and spanning forsets of residually finite groups}, J. Funct. Anal.
263 (2012), pp.~1769--1808.

\bibitem{csc-ijm-2015}
{\sc T.~Ceccherini-Silberstein and M.~Coornaert}, {\em Expansive actions of
  countable amenable groups, homoclinic pairs, and the {M}yhill property},
  Illinois J. Math. 59 (2015), pp.~597--621.

\bibitem{csc-anosov-tori}
{\sc T.~Ceccherini-Silberstein and M.~Coornaert}, {\em A Garden of {E}den
  theorem for {A}nosov diffeomorphisms on tori}, Topology Appl. 212 (2016),
  pp.~49--56.
	
	\bibitem{ccs-arXiv}
{\sc T.~Ceccherini-Silberstein and M.~Coornaert}, {\em A Garden of {E}den
  theorem for principal algebraic actions}, arXiv:1706.06548.

\bibitem{ceccherini}
{\sc T.~Ceccherini-Silberstein, A.~Mach{\`{\i}}, and F.~Scarabotti}, {\em
  Amenable groups and cellular automata}, Ann. Inst. Fourier (Grenoble) 49
 (1999), pp.~673--685.
	
\bibitem{Chou}
{\sc C.~Chou}, {\em Elementary amenable groups}, Ill.\ J.\ Math. 24(3)
  (1980), pp.~396--407.
	
\bibitem{Chung-Li}
{\sc N.-Ph.~Chung and H.~Li}, {\em Homoclinic group, IE groups, and expansive algebraic actions}, Invent.\ math.\
199 (2015), pp.~805--858.	

\bibitem{deninger-schmidt}
{\sc C.~Deninger and K.~Schmidt}, {\em Expansive algebraic actions of discrete
  residually finite amenable groups and their entropy}, Ergodic Theory Dynam.
  Systems 27 (2007), pp.~769--786.
	
\bibitem{einsiedler-rindler}
{\sc M.\ Einsiedler and H.\ Rindler}, {\em Algebraic actions of the discrete Heisenberg group and other non-abelian groups},
Aequ.\ Math. 62(1-2) (2001), pp.~117--135.	

\bibitem{gromov-esav}
{\sc M.~Gromov}, {\em Endomorphisms of symbolic algebraic varieties}, J. Eur.
  Math. Soc. (JEMS) 1 (1999), pp.~109--197.
	
\bibitem{kerr-li}
{\sc D.~Kerr and H.~Li}, {\em Ergodic theory. Independence and dichotomies.}
Springer Monographs in Mathematics. Springer, Cham, 2016.	
	
\bibitem{KLM}
{\sc P.~H.~Kropholler, P.~A.~Linnell, and J.~A.~Moody}, {\em Applications of a new K-theoretic theorem to soluble group rings},
Proc. Amer. Math. Soc. 104 (1988), pp.~675--684.

\bibitem{Lam}
{\sc T.~Y.~Lam}, {\em Lectures on modules and rings}. Graduate Texts in Mathematics. Springer, New York (1999).


\bibitem{Li:annals}
{\sc H.~Li}, {\em
Compact group automorphisms, addition formulas and Fuglede-Kadison determinants}, Ann.\ of Math.\ (2) 176 (2012), pp.~303--347.

\bibitem{Li-17}
{\sc H.~Li}, {\em Garden of Eden and specification}, Ergodic Theory Dynam. Systems (to appear). arXiv:1708.09012.

\bibitem{lind-schmidt}
{\sc D.~Lind and K.~Schmidt}, {\em Homoclinic points of algebraic {${\bf
  Z}^d$}-actions}, J. Amer. Math. Soc. 12 (1999), pp.~953--980.

\bibitem{lind-schmidt-survey-heisenberg}
{\sc D.~Lind and K.~Shmidt}, {\em A survey of algebraic actions of the discrete
  {H}eisenberg group}, Uspekhi Mat. Nauk 70 (2015), pp.~77--142.
	
\bibitem{lind-schmidt-verbitskiy}
{\sc D.~Lind, K.~Schmidt and E.~Verbitskiy}, {\em Entropy and growth rate of periodic points of algebraic {${\bf
  Z}^d$}-actions}. In: Dynamical numbers: interplay between dynamical systems and number theory,
	ed. S. Kolyada, Yu. Manin, M. M\"{o}ller, P. Moree and T. Ward,
	pp.~195--211, Contemporary Mathematics,
	vol.\ 523. American Mathematical Society, Providence (RI) (2010).
	
\bibitem{lind-schmidt-verbitskiy-2}
{\sc D.~Lind, K.~Schmidt, and E.~Verbitskiy}, {\em Homoclinic points, atoral polynomials, and periodic points of algebraic $\Z^d$-actions},
Ergodic Theory Dynam.\ Systems 33 (2013), pp.~1060--1081.
	
	
\bibitem{linnel:analytic}
{\sc P.A.~Linnel}, {\em Analytic versions of the zero divisor conjecture}. In: Geometry and cohomology in group thoery
(Durham, 1994), Lecture Notes Series, vol. 252, pp.~209--248. London Mathematical Society, Cambridge University Press,
Cambridge (1998).

\bibitem
{linnel-puls}
{\sc P.A.~Linnel and M.J.\ Puls}, {\em Zero divisors and $L^p(G)$, II}, New York J.\ Math.\ 7 (2001), pp.~49--58.

\bibitem{machi-mignosi}
{\sc A.~Mach{\`{\i}} and F.~Mignosi}, {\em Garden of {E}den configurations for
  cellular automata on {C}ayley graphs of groups}, SIAM J. Discrete Math. 6
  (1993), pp.~44--56.
	
\bibitem{miles}
{\sc R.\ Miles}, {\em Expansive algebraic actions of countable abelian groups}, Mon.\ Math.\ 147(2) (2006), pp.~155--164.

\bibitem{moore}
{\sc E.~F. Moore}, {\em Machine models of self-reproduction}, vol.~14 of Proc.
  Symp. Appl. Math., American Mathematical Society, Providence, 1963,
  pp.~17--34.

\bibitem{morris}
{\sc S.~A. Morris}, {\em Pontryagin duality and the structure of locally
  compact Abelian groups}, Cambridge University Press, Cambridge-New
  York-Melbourne, 1977.
\newblock London Mathematical Society Lecture Note Series, No. 29.

\bibitem{myhill}
{\sc J.~Myhill}, {\em The converse of {M}oore's {G}arden-of-{E}den theorem},
  Proc. Amer. Math. Soc. 14 (1963), pp.~685--686.
	
		
\bibitem{schmidt}
{\sc K.~Schmidt}, {\em Automorphisms of compact abelian groups and affine varieties},
Proc.\ London Math.\ Soc.\ (3) 61 (1990), pp.~480--496.

\bibitem{schmidt-book}
{\sc K.~Schmidt}, {\em Dynamical systems of algebraic origin}, vol.~128 of
  Progress in Mathematics, Birkh\"auser Verlag, Basel, 1995.
	
\bibitem{schmidt-verbitskiy}
{\sc K.~Schmidt and E.~Verbitskiy}, {\em Abelian sandpiles and the harmonic model},
Comm. Math. Phys. 292 (2009), pp.~721--759.

\bibitem{varopoulos}
{\sc N.Th.~Varopoulos}, {\em Long range estimates for Markov chains}, Bull.\ Sci.\ Math.\
(2) 109 (1985), pp.~225--252.

\bibitem{varopoulos-s-c}
{\sc N.Th.~Varopoulos, L.\ Saloff-Coste, and Th.\ Coulhon}, {\it Analysis and geometry on groups},
Cambridge Tracts in Math., vol.\ 100, Cambridge University Press, Cambridge, 1992.

\bibitem{walters}
{\sc P.~Walters}, {\em Topological conjugacy of affine transformations of
  tori}, Trans. Amer. Math. Soc. 131 (1968), pp.~40--50.
	
\bibitem{woess}
{\sc W.~Woess}, {\it Random walks on infinite graphs and groups}, Cambridge Tracts
in Math., vol.\ 138, Cambridge University Press, Cambridge, 2000.

\end{thebibliography}

\end{document}